
\documentclass[11pt,reqno]{amsart} 



\usepackage[displaymath, mathlines]{lineno}

\usepackage{graphicx}

\usepackage{microtype}

\usepackage{textcase}


\usepackage[top=1.00in,bottom=1.00in,left=1.00in,right=1.00in]{geometry} 

\usepackage{xcolor}

\usepackage{enumitem}

\usepackage[labelfont={},font=bf]{caption}

\usepackage[font={md,small},labelfont=md]{subfig}


\usepackage[noadjust,nosort]{cite}


\usepackage{amsmath, amsthm, amssymb, tikz, mathtools, array, mathrsfs, tensor}

\usepackage{polynom}

\usepackage{esint}

\usepackage{multicol}

\usepackage{dsfont}
	\newcommand{\one}{\mathds{1}}

\usepackage{framed}


\usepackage{upref}

\usepackage{stackengine}

\usepackage{hyperref}
	\hypersetup{colorlinks=true,linkcolor=blue,citecolor=blue,urlcolor=blue} 


\graphicspath{{"/Users/erik/Documents/Academic/Research/1_Postdoc/Airy/Disjoint geodesics/graphics/"}}

\allowdisplaybreaks

\numberwithin{equation}{section}

\makeatletter
\setlength{\@fptop}{0pt plus 1fil}
\setlength{\@fpbot}{0pt plus 1fil}
\makeatother

\interfootnotelinepenalty=10000


\newcommand{\eq}[1]{\begin{linenomath}\postdisplaypenalty=0\begin{align*} #1 \end{align*}\end{linenomath}}
\newcommand{\eeq}[1]{\begin{linenomath}\postdisplaypenalty=0\begin{align} \begin{split} #1 \end{split} \end{align}\end{linenomath}}

\newcommand{\stackref}[2]{\stackrel{\mbox{\footnotesize{\eqref{#1}}}}{#2}}
\newcommand{\stackrefp}[2]{\stackrel{\phantom{\mbox{\footnotesize{\eqref{#1}}}}}{#2}}

\def\eps{\varepsilon}
\def\vphi{\varphi}

\newcommand{\E}{\mathbb{E}}

\newcommand{\N}{\mathbb{N}}
\renewcommand{\P}{\mathbb{P}}
\newcommand{\Q}{\mathbb{Q}}
\newcommand{\R}{\mathbb{R}}

\newcommand{\Z}{\mathbb{Z}}

\renewcommand{\AA}{\mathcal{A}}
\newcommand{\BB}{\mathcal{B}}

\newcommand{\DD}{\mathcal{D}}
\newcommand{\EE}{\mathcal{E}}
\newcommand{\FF}{\mathcal{F}}

\newcommand{\LL}{\mathcal{L}}
\newcommand{\MM}{\mathcal{M}}

\newcommand{\OO}{\mathcal{O}}
\newcommand{\PP}{\mathcal{P}}

\newcommand{\UU}{\mathcal{U}}

\newcommand{\WW}{\mathcal{W}}
\newcommand{\XX}{\mathcal{X}}
\newcommand{\YY}{\mathcal{Y}}
\newcommand{\ZZ}{\mathcal{Z}}




\newcommand{\wt}[1]{\widetilde{#1}}


\DeclareMathOperator{\diam}{diam}
\DeclareMathOperator{\dist}{dist}

\DeclareMathOperator{\Gr}{Graph}




\newcommand{\dd}{\mathrm{d}} 


            \makeatletter
            \DeclareFontFamily{OMX}{MnSymbolE}{}
            \DeclareSymbolFont{MnLargeSymbols}{OMX}{MnSymbolE}{m}{n}
            \SetSymbolFont{MnLargeSymbols}{bold}{OMX}{MnSymbolE}{b}{n}
            \DeclareFontShape{OMX}{MnSymbolE}{m}{n}{
                <-6>  MnSymbolE5
               <6-7>  MnSymbolE6
               <7-8>  MnSymbolE7
               <8-9>  MnSymbolE8
               <9-10> MnSymbolE9
              <10-12> MnSymbolE10
              <12->   MnSymbolE12
            }{}
            \DeclareFontShape{OMX}{MnSymbolE}{b}{n}{
                <-6>  MnSymbolE-Bold5
               <6-7>  MnSymbolE-Bold6
               <7-8>  MnSymbolE-Bold7
               <8-9>  MnSymbolE-Bold8
               <9-10> MnSymbolE-Bold9
              <10-12> MnSymbolE-Bold10
              <12->   MnSymbolE-Bold12
            }{}
            
            \let\llangle\@undefined
            \let\rrangle\@undefined
            \DeclareMathDelimiter{\llangle}{\mathopen}%
                                 {MnLargeSymbols}{'164}{MnLargeSymbols}{'164}
            \DeclareMathDelimiter{\rrangle}{\mathclose}%
                                 {MnLargeSymbols}{'171}{MnLargeSymbols}{'171}
            \makeatother
            

\DeclareMathOperator{\MDP}{MaxDisjtPoly}
\DeclareMathOperator{\MDG}{MaxDisjtGeo}
\DeclareMathOperator{\NI}{NonInt}
\DeclareMathOperator{\Supp}{Supp}
\newcommand{\lt}{\mathrm{L}}
\newcommand{\rt}{\mathrm{R}}

    \DeclareFontFamily{U}{matha}{\hyphenchar\font45}
    \DeclareFontShape{U}{matha}{m}{N}{ <-6> matha5 <6-7> matha6 <7-8>
    matha7 <8-9> matha8 <9-10> matha9 <10-12> matha10 <12-> matha12 }{}
    \DeclareSymbolFont{matha}{U}{matha}{m}{N}
    \DeclareFontFamily{U}{mathx}{\hyphenchar\font45}
    \DeclareFontShape{U}{mathx}{m}{N}{ <-6> mathx5 <6-7> mathx6 <7-8>
    mathx7 <8-9> mathx8 <9-10> mathx9 <10-12> mathx10 <12-> mathx12 }{}
    \DeclareSymbolFont{mathx}{U}{mathx}{m}{N}
    
    \DeclareMathDelimiter{\llbrack} {4}{matha}{"76}{mathx}{"30}
    \DeclareMathDelimiter{\rrbrack} {5}{matha}{"77}{mathx}{"38}


\newtheorem{thm}{Theorem}[section]
\newtheorem{prop}[thm]{Proposition}
\newtheorem{cor}[thm]{Corollary}
\newtheorem{lemma}[thm]{Lemma}
\newtheorem{claim}[thm]{Claim}
\newtheorem{ourthm}{Theorem}
\newtheorem{theirthm}{Theorem} 

\theoremstyle{definition}
\newtheorem{defn}[thm]{Definition}

\newtheorem{remark}[thm]{Remark}



\renewcommand{\thefootnote}{\fnsymbol{footnote}}

\title[Endpoints of disjoint geodesics in the directed landscape]{Hausdorff dimensions for shared endpoints of disjoint geodesics in the directed landscape}


\keywords{Brownian last passage percolation, geodesics, polymers, Airy sheet, directed landscape}

%
%

\author{Erik Bates}
\thanks{E.B. was partially supported by NSF grant DMS-1902734.} 
\address{\hspace{-0.18in}Department~of Mathematics, University of Wisconsin--Madison, Van Vleck Hall, 480 Lincoln Drive, Madison, WI 53706-1324}
\email{ewbates@wisc.edu}

\author{Shirshendu Ganguly}
\thanks{S.G. was partially supported by NSF grant DMS-1855688 and a Sloan Research Fellowship in Mathematics.}
\address{\hspace{-0.18in}Department~of Statistics, University of California, Berkeley, 401 Evans Hall, Berkeley, CA 94720-3840}
\email{sganguly@berkeley.edu}

\author{Alan Hammond}
\thanks{A.H. was partially supported by NSF grant DMS-1855550 and a Miller Professorship at U.C. Berkeley.}
\address{\hspace{-0.18in}Departments~of Mathematics and Statistics, University of California, Berkeley, 899 Evans Hall, Berkeley, CA 94720-3840}
\email{alanmh@berkeley.edu}



\begin{document}
\bibliographystyle{acm}

\renewcommand{\thefootnote}{\arabic{footnote}} \setcounter{footnote}{0}

\begin{abstract}
Within the Kardar--Parisi--Zhang universality class, the space-time Airy sheet is conjectured to be the canonical scaling limit for last passage percolation models.
In recent work \cite{dauvergne-ortmann-virag?} of Dauvergne, Ortmann, and Vir\'ag, this object was constructed and, upon a parabolic correction, shown to be the limit of one such model: Brownian last passage percolation.
The limit object without parabolic correction, called the directed landscape, admits geodesic paths between any two space-time points $(x,s)$ and $(y,t)$ with $s<t$.
In this article, we examine fractal properties of the set of these paths.
Our main results concern exceptional endpoints admitting disjoint geodesics.
First, we fix two distinct starting locations $x_1$ and $x_2$, and consider geodesics traveling $(x_1,0)\to (y,1)$ and $(x_2,0)\to (y,1)$.
We prove that the set of $y\in\R$ for which these geodesics coalesce only at time $1$ has Hausdorff dimension one-half.
Second, we consider endpoints $(x,0)$ and $(y,1)$ between which there exist two geodesics intersecting only at times $0$ and $1$.
We prove that the set of such $(x,y)\in\R^2$ also has Hausdorff dimension one-half.
The proofs require several inputs of independent interest, including (i) connections to the so-called {\it difference weight profile} studied in \cite{basu-ganguly-hammond21}; and
(ii) a tail estimate on the number of disjoint geodesics starting and ending in small intervals.
The latter result extends the analogous estimate proved for the prelimiting model in \cite{hammond20}.
\vspace{-2.4\baselineskip}
\end{abstract}

\maketitle
\tableofcontents


\section{Introduction}

\subsection{Random growth models and Kardar--Parisi--Zhang universality}
The Kardar--Parisi--Zhang (KPZ) universality class is a broad collection of 
random growth models sharing the asymptotic features exhibited by solutions to a stochastic PDE known as the KPZ equation \cite{corwin12,corwin16,hairer-quastel18}.
The models known or believed to belong to this collection, including asymmetric exclusion processes, first and last passage percolation, and directed polymers in random media, are characterized by the combination of local growth driven by white noise and a smoothing effect from some notion of surface tension.
For these $(1+1)$-dimensional models, the resulting growth interface $h(t,x)$ manifests a triple $(1,\frac{1}{3},\frac{2}{3})$ of exponents: at time $t$, the value of $h(t,x)$ is of order $t^1$, deviations of $h(t,x)$ from its mean are of order $t^{1/3}$, and fluctuations of this same order are observed when $x$ is varied on the scale of $t^{2/3}$.
Furthermore, once $h(t,x)$ is properly centered and rescaled according to these exponents, a universal limit emerges as $t\to\infty$ \cite{quastel-remenik14}.

For most models, the picture just described is conjectural even if representing the consensus view.
Nevertheless, recent developments have confirmed the convergence of several exactly solvable models  
to a well-defined scaling limit.
Depending on the level of information one seeks to retain as $t\to\infty$, various limiting objects can be discussed, in the same way that a standard normal random variable, a multivariate Gaussian, and Brownian motion can all arise from a common central limit theorem.
And just as Brownian motion---the most general scaling limit in this list---possesses interesting fractal properties, the analogous KPZ scaling limit invites inquiries into its own fractal geometry.
This object was introduced in \cite{dauvergne-ortmann-virag?} and named the \textit{directed landscape}. 
Before we introduce precise notation, let us describe which geometric features this paper will investigate.

The directed landscape is a random function which assigns a passage time $\LL(x,s;y,t)$ between any two space-time points $(x,s)$ and $(y,t)$ with $s<t$.
It respects the usual last passage composition rule,
\eq{
\LL(x,s;y,t) = \sup_{z\in\R} [\LL(x,s;z,r)+\LL(z,r;y,t)] \quad \text{for all $r\in(s,t)$,}
}
which allows one to define the passage time $\LL(\gamma)$ of any particular path $\gamma : [s,t]\to\R$; see definition \eqref{length_def}.
Then $\LL(x,s;y,t)$ is equal to the largest $\LL(\gamma)$ among paths satisfying $\gamma(s)=x$ and $\gamma(t)=y$, and a path achieving this maximum is called a \textit{geodesic}.
Typically geodesics are unique, and those with a shared endpoint typically coalesce before reaching that endpoint.
We are interested in the exceptional cases violating these properties.

Our first consideration is of the following scenario.
Fixing the starting locations $x_1,x_2$ and the time interval $[s,t]=[0,1]$, let $\DD_{x_1,x_2}\subset\R$ be the set of terminal locations $y\in\R$ for which there exist geodesics $(x_1,0)\to (y,1)$ and $(x_2,0) \to (y,1)$ whose only point of intersection is the endpoint itself; see Figure~\ref{yes_D1}.
These exceptional endpoints form a random perfect, nowhere dense set for which we have the following result.

\begin{ourthm} \label{thm_1}
The Hausdorff dimension of $\DD_{x_1,x_2}$ is almost surely $\frac{1}{2}$.
\end{ourthm}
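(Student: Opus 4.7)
The plan is to leverage the \emph{difference weight profile}
\[
D(y) := \LL(x_1,0;y,1) - \LL(x_2,0;y,1),
\]
whose fluctuation geometry was analyzed in \cite{basu-ganguly-hammond21}. My first task is to identify, almost surely,
\[
\DD_{x_1,x_2} = \{\, y \in \R : D \text{ is not constant on any neighborhood of } y \,\}.
\]
The inclusion ``$\supseteq$'' proceeds by choosing $y_n \to y$ with $D(y_n) \neq D(y)$; by pre-compactness of geodesics through bounded space-time windows, one extracts limiting geodesics from $(x_1,0)$ and $(x_2,0)$ to $(y,1)$ whose coalescence time is forced up to $1$. The inclusion ``$\subseteq$'' follows because, when the geodesics from $(x_1,0)$ and $(x_2,0)$ to all terminal endpoints in a neighborhood of $y$ coalesce strictly before time $1$, the last-passage identity makes $D$ locally the difference of passage times from $(x_i,0)$ to a common coalescence point, which is independent of the terminal $y$.

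For the upper bound $\dim_H \DD_{x_1,x_2} \leq \tfrac{1}{2}$, I would cover $\R$ by intervals $I$ of length $\eps$ and bound the $\eps$-covering number. The event $\{\DD_{x_1,x_2} \cap I \neq \emptyset\}$ implies the existence of two disjoint geodesics from $\{(x_1,0),(x_2,0)\}$ ending in $I \times \{1\}$; this has probability $\lesssim \eps^{1/2}$ by the disjoint-geodesics tail estimate featured in the abstract (the directed-landscape analogue of \cite{hammond20}). A standard monotone-convergence argument on Hausdorff measures then yields $\dim_H \leq \tfrac{1}{2}$ almost surely.

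For the lower bound $\dim_H \DD_{x_1,x_2} \geq \tfrac{1}{2}$, I would apply Frostman's energy method. Assume $x_1 < x_2$. The rearrangement inequality
\[
\LL(x_1,0;y_1,1) + \LL(x_2,0;y_2,1) \geq \LL(x_1,0;y_2,1) + \LL(x_2,0;y_1,1), \quad y_1 < y_2,
\]
(the standard monotonicity for non-crossing geodesics) shows that $D$ is monotone non-increasing, so $\mu := -\dd D$ is a positive Borel measure whose support equals $\DD_{x_1,x_2}$ by the identification above. The Brownian-comparison input of \cite{basu-ganguly-hammond21} yields $(\tfrac{1}{2}-\delta)$-H\"older regularity of $D$ for every $\delta > 0$, and combined with monotonicity this forces $\mu([y-r,y+r]) \leq C r^{1/2-\delta}$. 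Hence $\mu$ has finite $s$-energy for all $s < \tfrac{1}{2}$, and Frostman's lemma concludes $\dim_H \DD_{x_1,x_2} \geq \tfrac{1}{2}$.

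The principal obstacle is the disjoint-geodesics tail estimate underlying the second step---itself announced as one of the main contributions of the paper---which requires lifting the prelimit argument of \cite{hammond20} through the Brownian LPP $\to$ directed landscape scaling limit. The identification in the first step, while conceptually clean, also requires care with measurability and almost-sure uniqueness of geodesics at the relevant continuum of endpoints.
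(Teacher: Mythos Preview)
Your overall architecture matches the paper's: identify $\DD_{x_1,x_2}$ with the set of local variation of the difference profile (equivalently, with $\Supp(\mu_{x_1,x_2})$), and then invoke the dimension computation of \cite{basu-ganguly-hammond21}. The paper does exactly this; it does not reprove the upper and lower bounds separately but simply cites \cite{basu-ganguly-hammond21} once the identification is in place. Your Frostman/H\"older and covering sketches are essentially the arguments already in \cite{basu-ganguly-hammond21}.

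The genuine gap is in the identification, specifically your ``$\supseteq$'' direction. The compactness sketch---pick $y_n\to y$ with $D(y_n)\neq D(y)$, extract limit geodesics, and assert the coalescence time is ``forced up to $1$''---does not work. The condition $D(y_n)\neq D(y)$ only says the coalescence \emph{point} for $y_n$ differs from that for $y$; it does not force the coalescence times $r_n$ toward $1$. Even if it did, a uniform limit of paths disjoint on $(0,r_n)$ may touch at interior times, so the limit need not witness $y\in\DD_{x_1,x_2}$. The paper instead proves the contrapositive using a crucial input you have not identified: Theorem~\ref{no_arbitrary_closeness}, which says that within any compact set of endpoint data there is a random $\eps>0$ below which two $\eps$-close geodesics must intersect. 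This is itself derived from the disjoint-geodesics rarity estimate (Corollary~\ref{disjoint_2geo_rarity}), so the tail bound you flagged only for the upper bound is equally essential here. Concretely: if $y\notin\DD_{x_1,x_2}$, the leftmost $\gamma^{\mathrm L}\in G_{x_1,y}$ and rightmost $\gamma^{\mathrm R}\in G_{x_2,y}$ meet at some $r_*\in(0,1)$; geodesics $\gamma_j^{\mathrm L}\in G_{x_1,y_j^{\mathrm L}}$ with $y_j^{\mathrm L}\nearrow y$ converge uniformly to $\gamma^{\mathrm L}$, and Theorem~\ref{no_arbitrary_closeness} applied on $[r_*,1]$ forces $\gamma_j^{\mathrm L}$ to touch $\gamma^{\mathrm L}$ at some $\tau\geq r_*$, whence extremality gives $\gamma_j^{\mathrm L}=\gamma^{\mathrm L}$ on $[0,\tau]$; symmetrically on the right; hence $\gamma_j^{\mathrm L}$ and $\gamma_j^{\mathrm R}$ intersect and Lemma~\ref{constant_lemma} yields $y\notin\Supp(\mu_{x_1,x_2})$.

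A smaller issue: your ``$\subseteq$'' sketch assumes that geodesics to \emph{all} terminal endpoints near $y$ coalesce before time $1$. That is strictly stronger than $y\notin\DD_{x_1,x_2}$, and deducing it from $y\notin\DD_{x_1,x_2}$ is again the hard direction above. The paper's argument for $\DD_{x_1,x_2}\subseteq\Supp(\mu_{x_1,x_2})$ is different and elementary: if $D$ is constant on $(y^{\mathrm L},y^{\mathrm R})\ni y$, take the crossing geodesics $(x_1,0)\to(y^{\mathrm R},1)$ and $(x_2,0)\to(y^{\mathrm L},1)$, use the constancy to see that their recombinations are also geodesics, and then trap every geodesic to $y$ between them via the ordering events $\OO_{x_1},\OO_{x_2}$, forcing passage through the crossing point.
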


There is an analogous bivariate scenario.
Fixing now only the interval $[s,t]=[0,1]$, let $\DD\subset\R^2$ be the set of $(x,y)$ for which there exist two geodesics $(x,0)\to (y,1)$ that intersect only at the endpoints; see Figure~\ref{yes_D2}.
Interestingly, this second exceptional set can be related to the first by associating to $\LL$ certain random measures with fractal supports.
In developing this connection, we also obtain the following.

\begin{ourthm} \label{thm_2}
The Hausdorff dimension of $\DD$ is almost surely $\frac{1}{2}$.
\end{ourthm}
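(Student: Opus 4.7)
The plan is to prove $\dim \DD \leq 1/2$ and $\dim \DD \geq 1/2$ separately. The upper bound will use the tail estimate on disjoint geodesics between small intervals---one of the inputs highlighted in the abstract. The lower bound will leverage Theorem \ref{thm_1} via a geometric correspondence that embeds (an image of) $\DD_{x_1, x_2}$ into $\DD$.

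\textbf{Upper bound.} By translation invariance of $\LL$, it suffices to bound $\dim(\DD \cap [-M, M]^2)$ for a fixed $M > 0$. Partition this box by $N^2$ squares of side $\eps = 1/N$. If a square $I \times J$ intersects $\DD$ then, by the existence and monotonicity of leftmost/rightmost geodesics, there must be two disjoint geodesics starting in $I \times \{0\}$ and ending in $J \times \{1\}$. Applying the extended Hammond-type tail estimate bounds the probability of this event by $C\eps^{\alpha}$ with $\alpha > 3/2$. Summing over $N^2 \asymp \eps^{-2}$ squares, the expected contribution to the $s$-Hausdorff premeasure is $\lesssim \eps^{s + \alpha - 2}$, which is summable along $\eps_n = 2^{-n}$ for any $s > 1/2$. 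A Borel--Cantelli argument then gives $\HH^s(\DD \cap [-M, M]^2) < \infty$ almost surely for every $s > 1/2$, so $\dim \DD \leq 1/2$.

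\textbf{Lower bound.} Fix $x_1 < x_2$. By Theorem \ref{thm_1}, $\DD_{x_1, x_2}$ has Hausdorff dimension $1/2$ almost surely. For each $y \in \DD_{x_1, x_2}$, fix disjoint geodesics $\gamma_L$ from $(x_1, 0)$ and $\gamma_R$ from $(x_2, 0)$, both terminating at $(y, 1)$. By planar ordering of geodesics in the directed landscape, every geodesic from $(x, 0)$ to $(y, 1)$ with $x \in [x_1, x_2]$ coalesces either with $\gamma_L$ or with $\gamma_R$. Set
\[
X(y) := \sup\bigl\{x \in [x_1, x_2] : \text{some geodesic from } (x, 0) \text{ to } (y, 1) \text{ coalesces with } \gamma_L\bigr\}.
\]
A left-limit of geodesics from $(x, 0)$ to $(y, 1)$ as $x \uparrow X(y)$ produces a geodesic from $(X(y), 0)$ to $(y, 1)$ that approaches $(y, 1)$ alongside $\gamma_L$; a right-limit produces another that approaches alongside $\gamma_R$. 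Since $\gamma_L$ and $\gamma_R$ are disjoint on $(0, 1)$, the non-crossing property of geodesics forces these two limiting geodesics to be disjoint on $(0, 1)$ as well. Hence $(X(y), y) \in \DD$. The map $\Psi(y) := (X(y), y)$ sends $\DD_{x_1, x_2}$ into $\DD$; since projection onto the second coordinate is $1$-Lipschitz and recovers $y$ from $\Psi(y)$, one concludes $\dim \DD \geq \dim \Psi(\DD_{x_1, x_2}) \geq \dim \DD_{x_1, x_2} = 1/2$.

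\textbf{Main obstacle.} The technical crux is justifying that at $X(y)$ the two limiting geodesics genuinely differ and intersect only at the common endpoints. One must rule out (i) the degenerate cases $X(y) \in \{x_1, x_2\}$, in which the construction would not yield a new disjoint pair, and (ii) coincidence or interior crossing of the two limiting geodesics from $(X(y), 0)$. Both issues should ultimately follow from the coalescence tree structure of geodesics in $\LL$ together with monotonicity of leftmost/rightmost geodesics under perturbation of the starting point, but making these approximation arguments precise---especially at exceptional starting points where multiplicity of geodesics is possible---is the most delicate part of the proof. A secondary, but routine, task is establishing the precise exponent $\alpha > 3/2$ needed in the upper bound from the interval-to-interval tail estimate asserted in the abstract.
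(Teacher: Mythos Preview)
Your upper bound is essentially the paper's argument: cover $[-R,R]^2$ by $\eps$-boxes, use planarity to show that a box meeting $\DD$ forces two disjoint geodesics between $\eps$-intervals, apply the tail estimate, and run Borel--Cantelli. One small correction: the available exponent is $3/2$ up to sub-polynomial factors, not $\alpha>3/2$; the bound is $\P(\cdot)\le C\exp\{c(\log\eps^{-1})^{5/6}\}\eps^{3/2}\le \eps^{3/2-\eta}$, which still yields $d_H(\DD)\le 1/2$ since $\eta>0$ is arbitrary.

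Your lower bound takes a genuinely different route from the paper, and the obstacle you flag is real. The paper does not attempt to build the map $y\mapsto X(y)$ directly from geodesic geometry. Instead it introduces the planar measure $\mu$ on $\R^2$ with $\mu([x_1,x_2]\times[y_1,y_2])=\mu_{x_1,x_2}([y_1,y_2])$, proves separately that $\Supp(\mu)=\DD$ and $\Supp(\mu_{x_1,x_2})=\DD_{x_1,x_2}$, and then observes a trivial measure-theoretic fact: if the fiber $[x_1,x_2]\times\{y\}$ misses $\Supp(\mu)$, compactness gives a tube of zero $\mu$-mass around it, so $y\notin\Supp(\mu_{x_1,x_2})$. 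This yields, for every $y\in\DD_{x_1,x_2}$, some $x\in[x_1,x_2]$ with $(x,y)\in\DD$, and the Lipschitz projection argument you wrote down then finishes. The point is that the hard work---ruling out degenerate $X(y)$, and showing the two limiting geodesics at $X(y)$ are genuinely interior-disjoint---is absorbed into the identity $\Supp(\mu)=\DD$, whose proof (via a separate ``no arbitrary closeness'' result for geodesics in a compact set) is cleaner than tracking limits at an exceptional, possibly non-unique starting point.

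Your construction is morally the same existence statement, but pushing it through directly would require you to prove, at the random threshold $X(y)$, essentially the same intersection/separation dichotomy that the paper packages as Theorem~1.11 and Proposition~4.3. If you want to avoid the measure $\mu$, that is the lemma you would need to isolate.
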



We will restate these two results in Theorems~\ref{main_thm} and \ref{main_thm_2}, after having properly defined the relevant objects.
In fact, we expand on these dimension calculations by identifying $\DD_{x_1,x_2}$ and $\DD$ as the supports of random fractal measures which arise naturally out of the directed landscape (see also Section \ref{motivation_section} for further motivation).
But first we define \textit{Brownian last passage percolation (LPP)}, the semi-discrete model from which the directed landscape is realized as a scaling limit in \cite{dauvergne-ortmann-virag?}.

\begin{figure}[]
\centering
\subfloat[$y\notin \DD_{x_1,x_2}$]{
\includegraphics[trim=0.5in 0.5in 0.5in 0.5in, clip, width=0.48\textwidth]{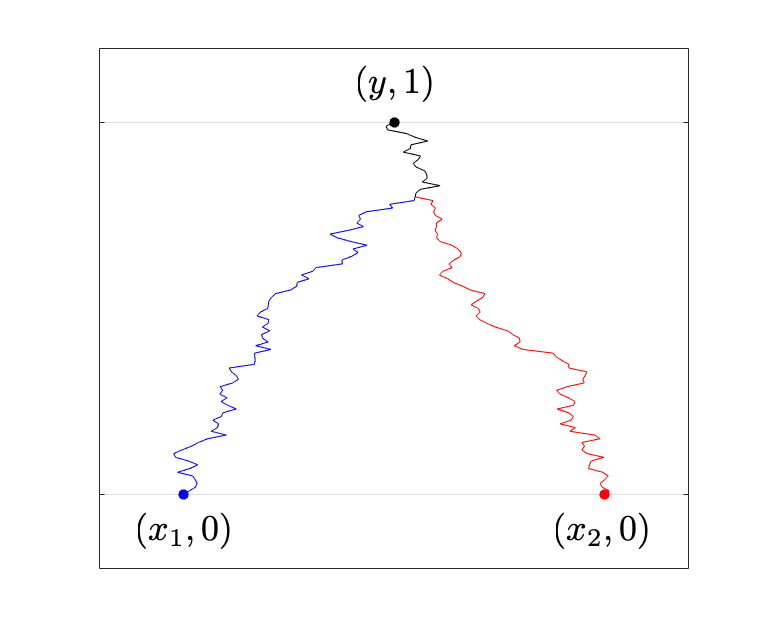}
\label{no_D1}
}
\hfill
\subfloat[$y\in \DD_{x_1,x_2}$]{
\includegraphics[trim=0.5in 0.5in 0.5in 0.5in, clip, width=0.48\textwidth]{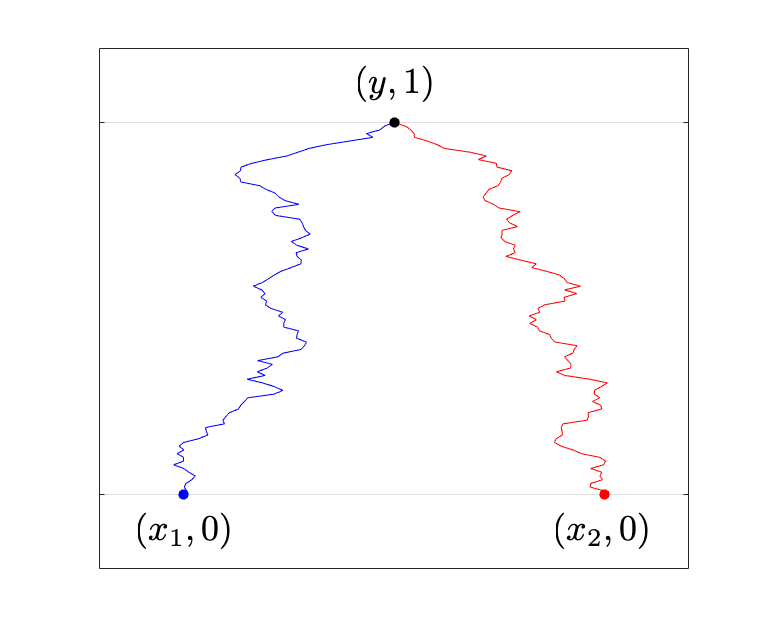}
\label{yes_D1}
}
\\
\subfloat[$(x,y)\notin \DD$]{
\includegraphics[trim=0.5in 0.5in 0.5in 0.5in, clip, width=0.48\textwidth]{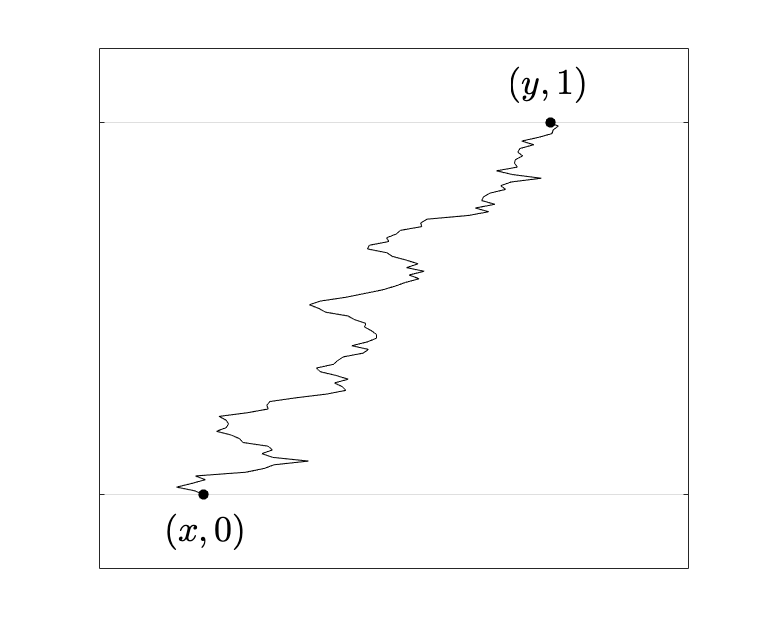}
\label{no_D2}
}
\hfill
\subfloat[$(x,y)\in \DD$]{
\includegraphics[trim=0.5in 0.5in 0.5in 0.5in, clip, width=0.48\textwidth]{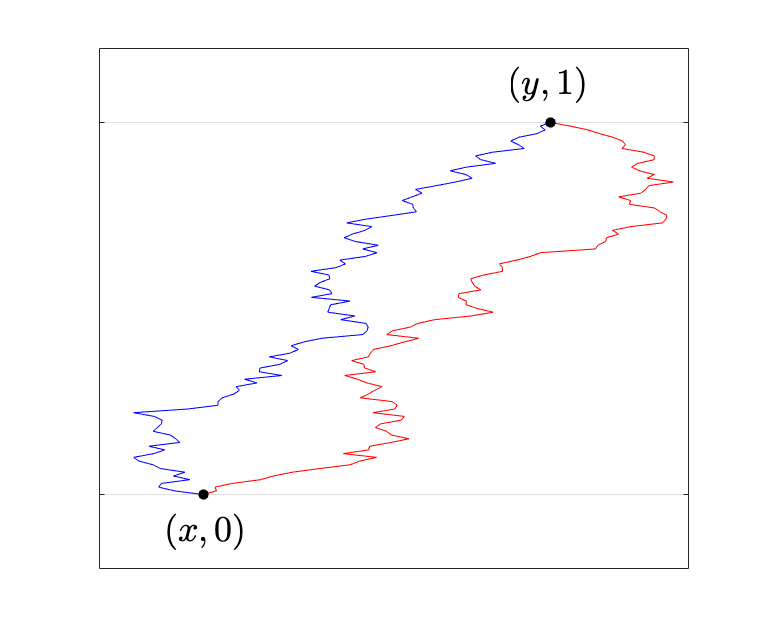}
\label{yes_D2}
}
\caption{Exceptionality of the sets $\DD_{x_1,x_2}$ and $\DD$.  
Time is visualized in the vertical direction, and space in the horizontal direction.  
The curves shown are graphs of geodesics as functions of the vertical coordinate.
For fixed $x_1,x_2$, and $y$, we will almost surely witness the scenario in (a), in which the coalescence of two geodesics sharing a terminal location $y$ happens before the terminal time $1$.
Similarly, for fixed $x$ and $y$, we will almost surely witness the scenario in (c), in which there is a unique geodesic associated to the pair of space-time points $(x,0)$ and $(y,1)$.}
\label{D_set}
\end{figure}




\subsection{Prelimiting model: Brownian last passage percolation} \label{prelimiting_model}
Let $B(\cdot,k) : \R\to\R$, $k\in\Z$, denote independent two-sided Brownian motions supported on a common probability space equipped with probability measure $\P$.
To each pair of real numbers $x \leq y$ together with any pair of integers $i\leq j$, we associate a passage time:
\eeq{ \label{blpp_def}
M(x,i;y,j) \coloneqq \sup\bigg\{\sum_{k=i}^j [B(z_{k+1},k)-B(z_k,k)]\ \bigg|\  x = z_i \leq z_{i+1} \leq \cdots \leq z_j \leq z_{j+1} = y\bigg\}.
}
That is, $M(x,i;y,j)$ is the largest number that can be obtained by partitioning the interval $[x,y]$ into $j-i+1$ ordered subintervals  $[z_i,z_{i+1}], [z_{i+1},z_{i+2}], \dots, [z_j, z_{j+1}]$ and then summing the Brownian increments incurred by traversing the subinterval $[z_k,z_{k+1}]$ using the $k^\text{th}$ Brownian motion.
This model, called \textit{Brownian LPP}, was first studied in \cite{glynn-whitt91} as the limit of a queueing problem. 

\subsubsection{Unscaled coordinates: staircases and a variational formula on functions}
By compactness and the continuity of the Brownian motions, there is at least one partition that achieves the supremum in \eqref{blpp_def}.
As another perspective, the candidate partitions are in bijection with right-continuous, non-decreasing functions $\vphi : [x,y) \to \llbrack i,j\rrbrack$, where  $\llbrack i,j\rrbrack$ denotes the integer interval $\{i,i+1,\dots,j\}$.
Namely, $\vphi(z) = k$ precisely when $z \in [z_k,z_{k+1})$.
Therefore, we can express $M$ formally as
\eeq{ \label{blpp_def_2}
M(x,i;y,j) \coloneqq \sup_{\vphi} \int_x^y \dd B(z,\vphi(z)).
}
Each $\vphi$ can be associated to a ``staircase" path in $\R^2$ starting at $(x,i)$, ending at $(y,j)$, and consisting of alternating horizontal and vertical line segments; see Figure~\ref{staircase_horizontal}.

\begin{figure}[]
\center
\subfloat[Unscaled staircase given by $\vphi$]{
\includegraphics[trim=0in 1.5in 1.1in 2in, clip, height=2.5in]{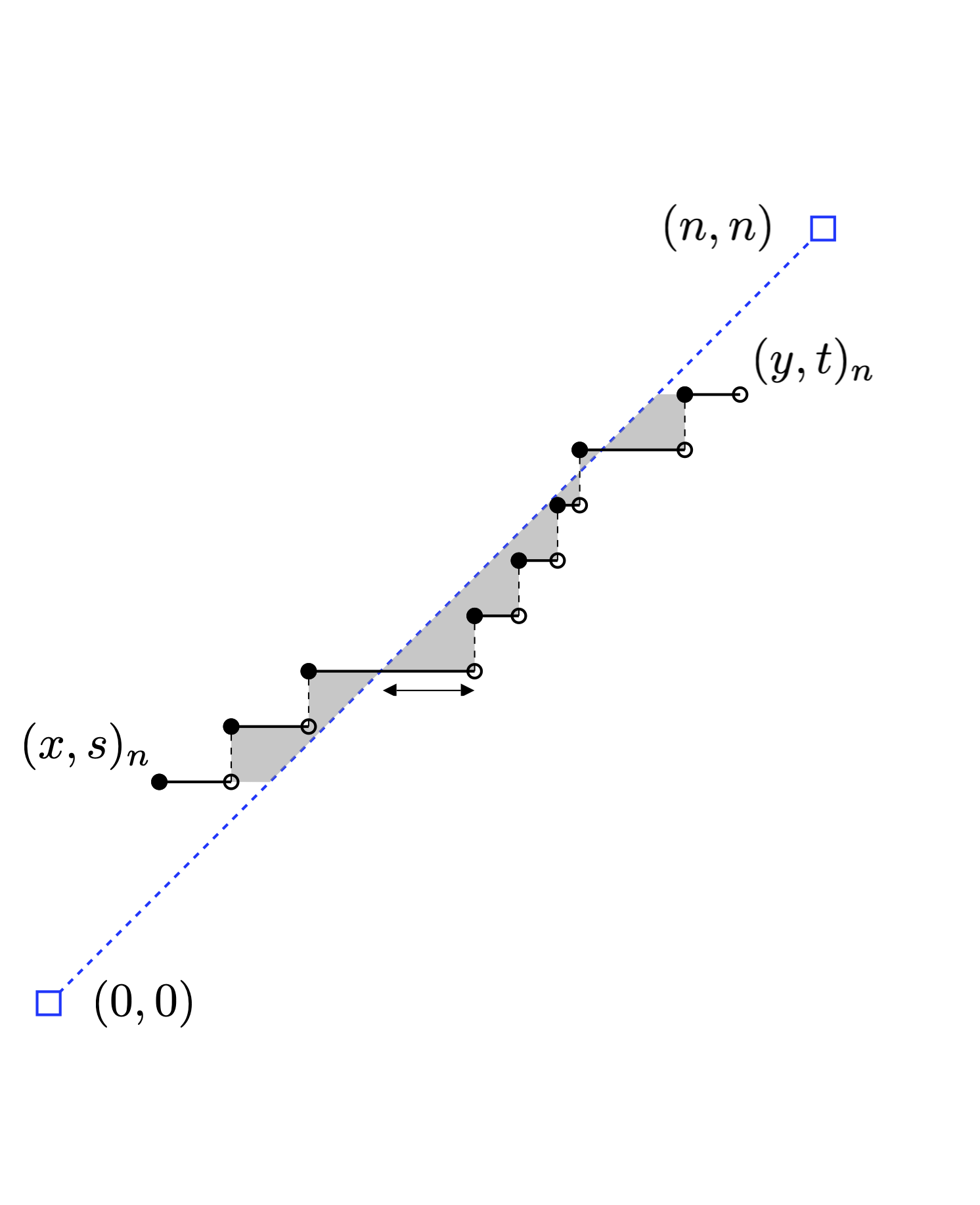}
\label{staircase_horizontal}	
}
\hfill
\subfloat[Scaled]{
\includegraphics[trim=2in 0in 18in 1.2in, clip, height=2.5in]{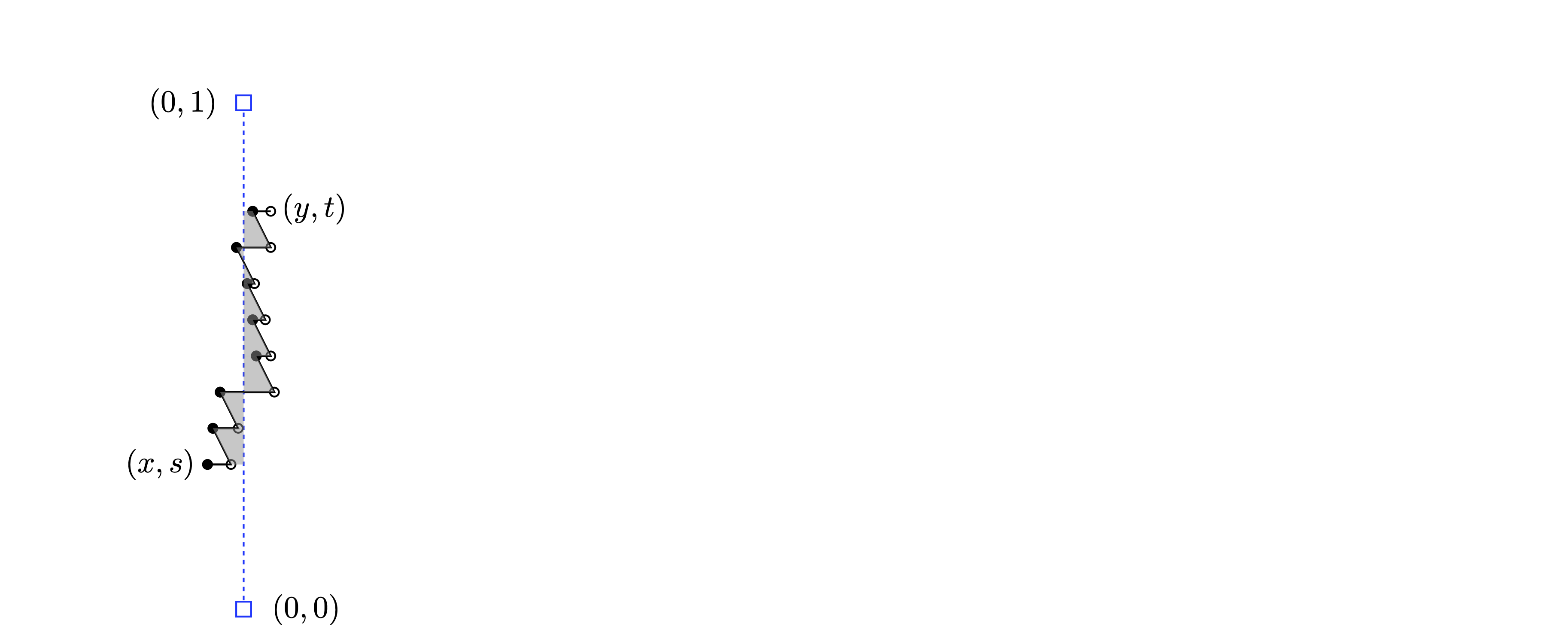}
\label{polymer_shade}
}
\hfill
\subfloat[$R_n(\vphi)$]{
\includegraphics[trim=2in 0in 18in 1.2in, clip, height=2.5in]{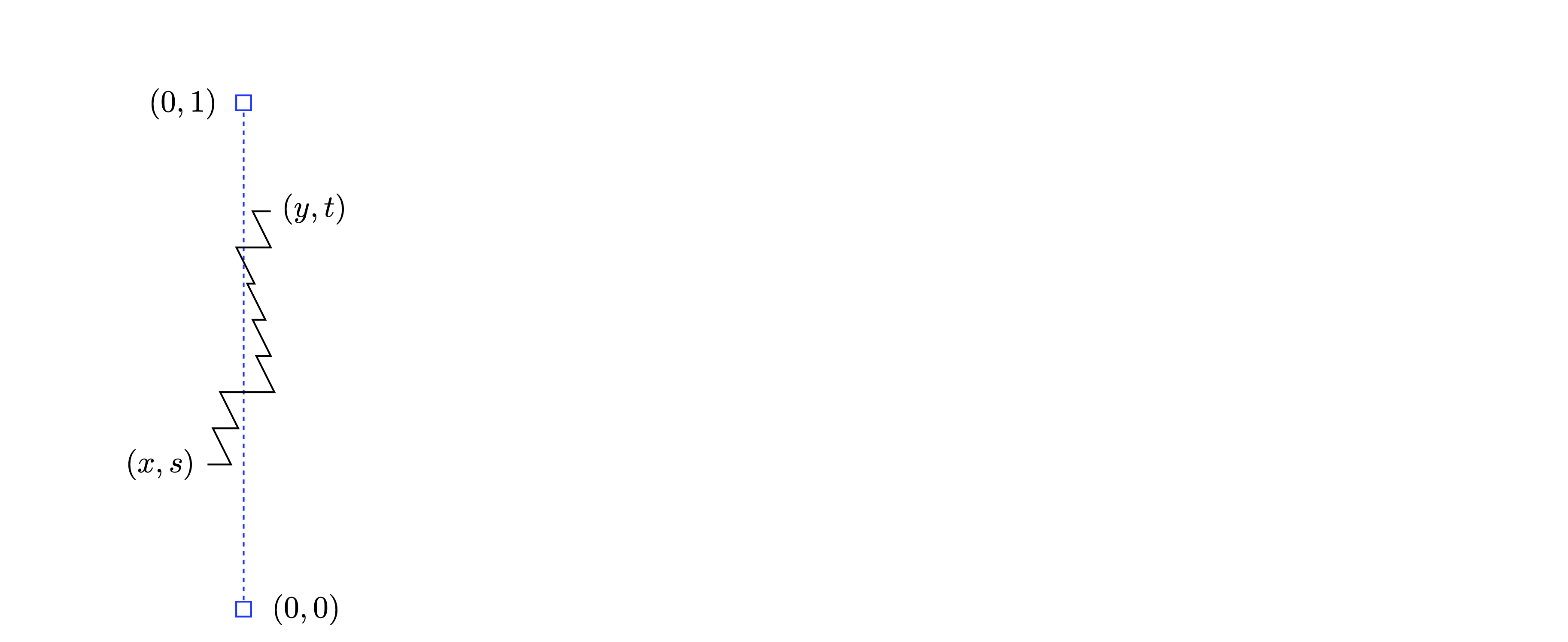}
\label{polymer_path}
}
\\
\subfloat[Unscaled staircase given by $\vphi$]{
\includegraphics[trim=3.3in 0in 12in 1.2in, clip, height=2.5in]{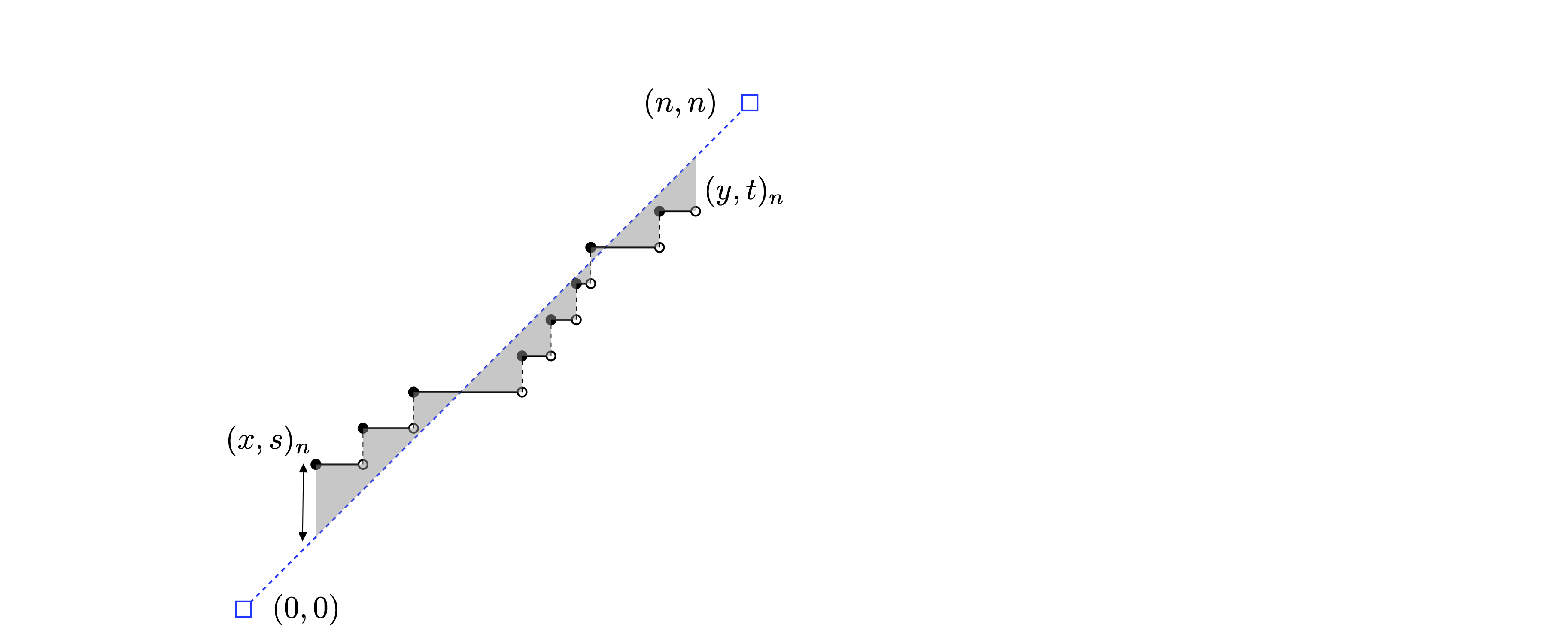}
\label{staircase_vertical}	
}
\hfill
\subfloat[$\Gamma_{n,u}^{(\vphi)}$]{
\includegraphics[trim=2in 0in 18in 1.2in, clip, height=2.5in]{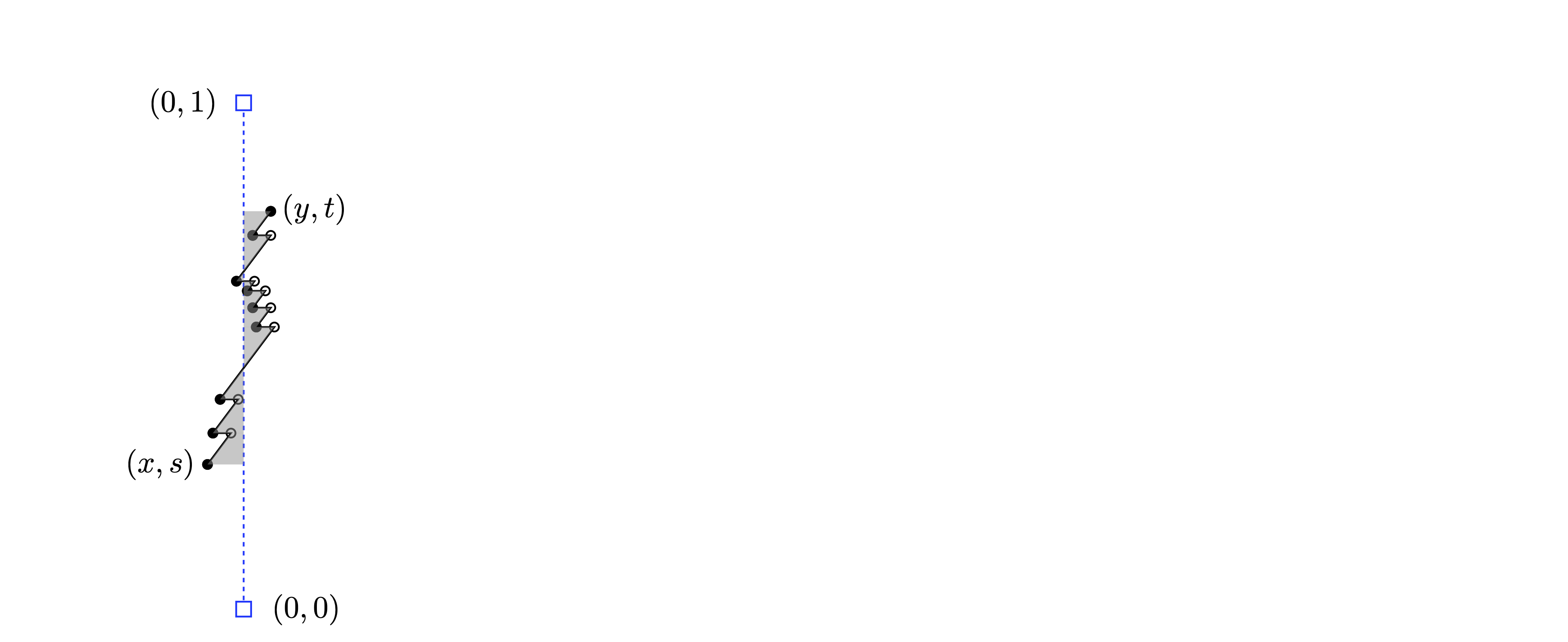}
\label{geodesic_shade}
}
\hfill
\subfloat[$\wt R_{n,u}(\vphi)$]{
\includegraphics[trim=2in 0in 18in 1.2in, clip, height=2.5in]{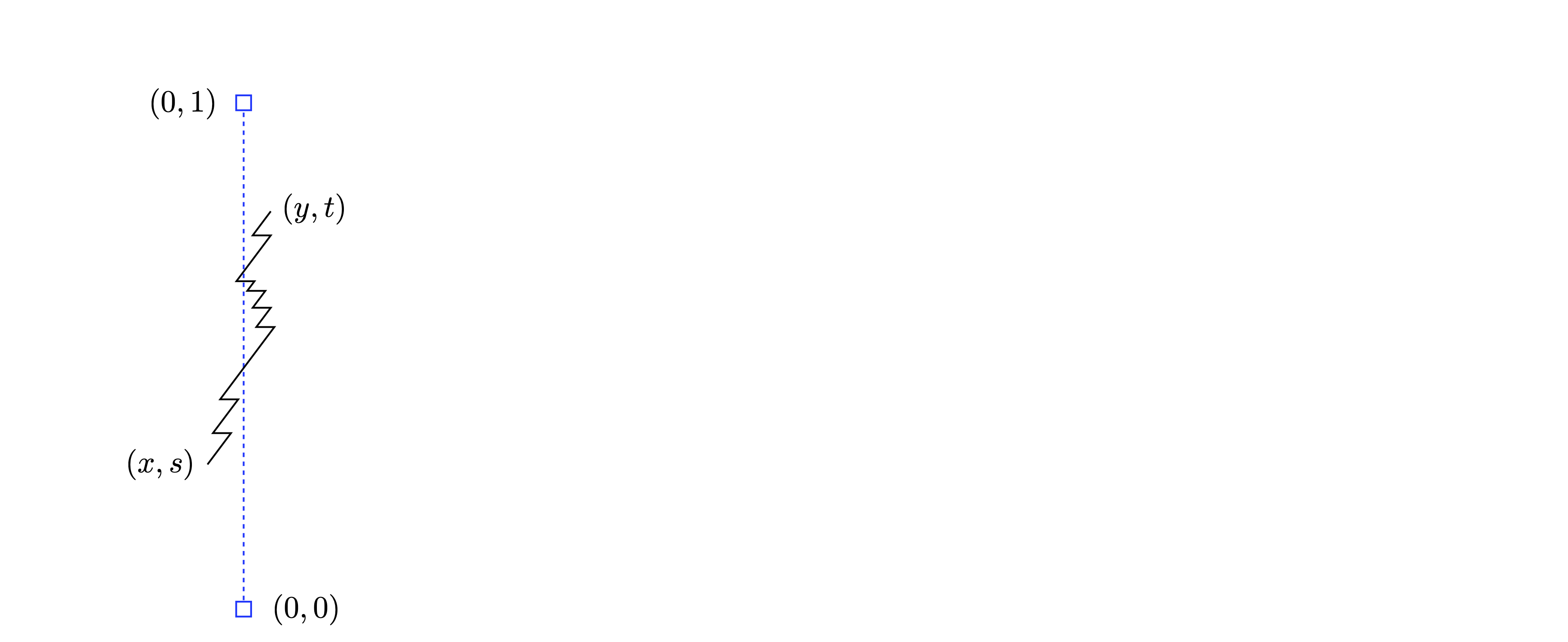}
\label{geodesic_path}
}
\caption{In this example, $x<0<y$, $0<s<t<1$, and we assume $sn,tn\in\Z$ for clarity.
The unscaled staircase in (a) and (d) between $(x,s)_n$ and $(y,t)_n$ is the graph of $\vphi$.
The horizontal segments are of the form $[z_k,z_{k+1}] \times \{k\}$ and connected via the vertical segments $\{z_{k+1}\}\times[k,k+1]$.
When deviations from the diagonal connecting $(0,0)$ and $(n,n)$ are measured as a function of the vertical coordinate and scaled by $2n^{2/3}$, the result is the zigzag in (b), equivalently realized by applying the scaling map $R_n$ to the picture in (a).
We will view the zigzag as a closed planar path $R_n(\vphi)\subset\R^2$, depicted in (c).
Its horizontal segments have length $\frac{1}{2}n^{-2/3}(z_{k+1}-z_k)$, while its oblique segments have slope $-2n^{-1/3}$ and extend over vertical distances that are multiples of $n^{-1}$.
Alternatively, when the deviations in (d) are regarded as a function of the horizontal coordinate and reparameterized via \eqref{n_geo_eq}, the resulting $\Gamma_{n,u}^{(\vphi)}$ is shown in (e) as a function of the time variable on the vertical axis.
The associated planar path $\wt R_{n,u}(\vphi)\subset\R^2$ is shown in (f). 
Its oblique segments have slope $(\frac{1}{2}n^{1/3}+\frac{y-x}{t-s})^{-1}$ and traverse a horizontal distance of $\frac{1}{2}n^{-2/3}(z_{k+1}-z_k)$, while the horizontal segments have lengths that are multiples of $\frac{1}{2}n^{-2/3}$.}
\label{blpp_fig}
\end{figure}

\subsubsection{Scaled coordinates: standardized passage times, zigzags, and polymers} \label{scaled_coordinates_sec}
For any $(x,i;y,j)$, the distribution of $M(x,i;y,j)$ can be inferred from that of $M(0,0;n,n)$ by Brownian rescaling, namely
\eeq{ \label{first_scaling_relation}
M(x,i;y,j) 
\stackrel{\text{dist}}{=} M(0,0;y-x,j-i)
\stackrel{\text{dist}}{=} \sqrt{\frac{y-x}{j-i}}M(0,0;j-i,j-i).
}
It is thus natural to study the quantity $M(0,0;n,n)$, which has the remarkable property of agreeing in distribution with the largest eigenvalue of an $(n+1)\times(n+1)$ matrix sampled from the Gaussian unitary ensemble (GUE) with entry variance $n$ \cite{baryshnikov01,gravner-tracy-widom01,oconnell-yor02}.
Therefore, the mean of $M(0,0;n,n)$ is asymptotic to $2n$ as $n\to\infty$, and its fluctuations about this mean are of order $n^{1/3}$, in agreement with KPZ scaling.
More precisely, if we define
\eeq{ \label{zero_zero}
W_n \coloneqq \frac{M(0,0;n,n) - 2n}{n^{1/3}},
}
then $W_n$ converges in law as $n\to\infty$ to the GUE Tracy--Widom distribution \cite[Thm.~3.1.4]{anderson-guionnet-zeitouni10}.

In order to recover a scaling limit for the joint process $(M(0,0;n+y,n))_{y\geq-n}$, one must also know on which scale to vary $y$ to induce fluctuations of order $n^{1/3}$ relative to $M(0,0;n,n)$.
A question which turns out to be equivalent is, by how much does a staircase achieving $M(0,0;n,n)$ in \eqref{blpp_def_2} deviate from the straight line connecting $(0,0)$ and $(n,n)$?
KPZ considerations put forward the answer of $n^{2/3}$, leading us to introduce the scaled coordinates
\eeq{ \label{scaled_coordinates}
(x,s)_n \coloneqq (sn+2n^{2/3}x,\lfloor sn\rfloor), \quad x,s\in\R,
}
so that $x$ and $s$ can be regarded as unit-order.
Generalizing \eqref{zero_zero}, we define the standardized passage time
\eeq{ \label{general_process}
W_n(x,s;y,t) \coloneqq \frac{M((x,s)_n;(y,t)_n) - 2(t-s)n - 2n^{2/3}(y-x)}{n^{1/3}}.
}

\begin{remark} \label{makes_sense_remark}
The definition \eqref{general_process} makes sense whenever $\lfloor sn\rfloor \leq \lfloor tn \rfloor $ and $sn+2n^{2/3}x\leq tn+2n^{2/3}y$.
When $s<t$, these requirements will be satisfied for all $n$ sufficiently large. 
Therefore, for any
\eq{
u \in \R^4_\uparrow \coloneqq \{(x,s;y,t) \in \R^4 : s<t\},
}
the quantity $W_n(u)$ is well-defined for all $n$ larger than some $n_0 = n_0(u)$.
Henceforth, whenever we consider $W_n(u)$ or any other object that depends on both $u$ and $n$, it will be implicitly assumed that $n\geq n_0(u)$.
For any compact $K \subset \R^4_\uparrow$, the choice of $n_0$ can made uniformly over $u\in K$.
\end{remark}

The scale prescribed by definitions \eqref{scaled_coordinates} and \eqref{general_process} has reshaped the original geometry of Brownian LPP via the linear transformation $R_n$ mapping $(2n^{2/3},0) = (1,0)_n \mapsto (1,0)$ and $(n,n) = (0,1)_n \mapsto (0,1)$.
The images under $R_n$ of the staircases from before are now ``zigzags" consisting of horizontal and oblique segments, 
as seen in Figures~\ref{polymer_shade} and \ref{polymer_path}.
Given a staircase function $\vphi$, let us write $R_n(\vphi)$ for the planar path determined by the associated zigzag.
To be completely precise, we make the following definitions.

\begin{defn}
For the right-continuous, non-decreasing $\vphi : [x,y)\to\llbrack i,j\rrbrack$ defined by $\vphi(z) = k$ for $z\in[z_k,z_{k+1})$, the \textit{zigzag} $R_n(\vphi)\subset\R^2$ is the image under $R_n$ of the following staircase:
\eq{
\underbrace{\bigcup_{k = i}^{j} ([z_k,z_{k+1}] \times \{k\})}_{\text{horizontal segments}} \cup \underbrace{\bigcup_{k=i}^{j-1} (\{z_{k+1}\}\times[k,k+1])}_{\text{vertical segments}}.
}
The reader might find this definition more transparent by simply examining Figure~\ref{polymer_path}.
\end{defn}

\begin{defn} \label{polymer_def}
For $u = (x,s;y,t)\in\R^4_\uparrow$, if $\vphi$ achieves the supremum in \eqref{blpp_def_2} for $M((x,s)_n;(y,t)_n)$, then we will say $R_n(\vphi)$ is an \textit{$n$-polymer} between the \textit{endpoints} $(x,s)$ and $(y,t)$.
Let $P_{n,u}$ denote the set of such polymers.
\end{defn}

For fixed $u$, there is almost surely a unique $n$-polymer in $P_{n,u}$ \cite[Lemma 4.6(1)]{hammond19I}, although there may be random exceptional $u\in\R^4_\uparrow$ admitting more than one.
While $n$-polymers are natural geometrically, they are formally subsets of $\R^2$ rather than functions.
When taking a limit $n\to\infty$, it is convenient to have a functional perspective of these objects.
This leads to the notion of \textit{$n$-geodesics}, which is discussed next.
Our choice of terminology is somewhat arbitrary, but we will need to distinguish between the two notions for technical reasons.


\subsubsection{Spatial deviations and geodesics}
After the application of $R_n$, the order $n^{2/3}$ spatial deviations mentioned before are observed as order $1$ deviations of polymers from the vertical axis connecting $(0,0)$ and $(0,1)$.
These fluctuations will be recorded as follows.
Given $u=(x,s;y,t)\in\R^4_\uparrow$ and a candidate $\vphi$ in \eqref{blpp_def_2} for $M((x,s)_n;(y,t)_n)$, i.e.,~a right-continuous, non-decreasing $\vphi: [sn+2n^{2/3}x,tn+2n^{2/3}y)\to\llbrack \lfloor sn\rfloor,\lfloor tn\rfloor\rrbrack$, we consider the function $\Gamma_{n,u}^{(\vphi)} : [s,t] \to \R$ given by
\eeq{ \label{n_geo_eq}
\Gamma_{n,u}^{(\vphi)}(r) \coloneqq \frac{L_{n,u}(r) - \vphi(L_{n,u}(r))}{2n^{2/3}}, \quad r\in[s,t), \qquad \Gamma_{n,u}^{(\vphi)}(t) \coloneqq \lim_{r\nearrow t} \Gamma_{n,u}^{(\vphi)}(r),
}
where 
\eeq{ \label{parameterized_time}
L_{n,u}(r) \coloneqq rn+\frac{t-r}{t-s}2n^{2/3}x+\frac{r-s}{t-s}2n^{2/3}y, \quad r\in[s,t].
}
That is, in unscaled coordinates, the vertical separation between the point $(L_{n,u}(r),L_{n,u}(r))$ and the staircase associated to $\vphi$ is exactly $\Gamma_{n,u}^{(\vphi)}(r)\cdot2n^{2/3}$; see Figures~\ref{staircase_vertical} and \ref{geodesic_shade}.

\begin{defn}
\label{n_geo_def}
For $u = (x,s;y,t)\in\R^4_\uparrow$, if $\vphi$ achieves the supremum in \eqref{blpp_def_2} for $M((x,s)_n;(y,t)_n)$, then $\Gamma_{n,u}^{(\vphi)}$ will be called an \textit{$n$-geodesic} between $(x,s)$ and $(y,t)$.
Let $G_{n,u}$ denote the set of such geodesics. 
\end{defn}


Henceforth, the variables $x$ and $y$ are to be thought of as spatial coordinates, despite their initial role as time coordinates in Brownian motions.
Instead, $s$ and $t$ are now the temporal coordinates, reflecting our desire to think of $\Gamma_{n,u}^{(\vphi)}$ as an upward moving path in $\R^2$ starting at $(x,s)$ and terminating at $(y,t)$, as illustrated in Figures~\ref{geodesic_shade} and \ref{geodesic_path}.
To be completely precise, though, we note that $\Gamma_{n,u}^{(\vphi)}(s)$ and $\Gamma_{n,u}^{(\vphi)}(t)$ are not necessarily exactly equal to $x$ and $y$, respectively.
For instance, the equality with $x$ will only be approximate if $sn\notin \Z$, or if $sn\in\Z$ but $\vphi(sn+2n^{2/3}x) > sn$.
When $\Gamma_{n,u}^{(\vphi)}$ is an $n$-geodesic, the latter scenario happens with probability zero.

Note that $\Gamma_{n,u}^{(\vphi)}\in G_{n,u}$ if and only if $R_n(\vphi) \in P_{n,u}$, and so $n$-geodesics and $n$-polymers are two (slightly) different ways of obtaining scaled versions of the maximizers in \eqref{blpp_def_2}.
The difference between the two objects is highlighted in Figure~\ref{blpp_fig}, and we will later discuss in Section \ref{inputs_2} how they nonetheless give rise to the same limiting object.

\subsection{Limiting model: the directed landscape} \label{limiting_model}

Section~\ref{scaled_coordinates_sec} saw the appearance of our first canonical object in the KPZ universality class, namely the Tracy--Widom law.
It has been proven to arise also in Poissonian LPP \cite{baik-deift-johansson99,baik-rains01}; the asymmetric simple exclusion process \cite{tracy-widom09}; the totally asymmetric simple exclusion process (TASEP) \cite{johansson00,ferrari-spohn06,borodin-ferrari-prahofer-sasamoto07}; the positive-temperature version of Brownian LPP \cite{borodin-corwin14,borodin-corwin-ferrari14} introduced by O'Connell and Yor~\cite{oconnell-yor01}; the continuum random polymer \cite{amir-corwin-quastel11,borodin-corwin-ferrari14} 
by Alberts, Khanin, and Quastel~\cite{alberts-khanin-quastel14I,alberts-khanin-quastel14II}; and the fully discrete log-gamma polymer \cite{borodin-corwin-remenik13} 
by Sepp\"al\"ainen~\cite{seppalainen12}.
Apart from its universality, the role of the Tracy--Widom law in our current setting is to describe the limiting behavior of any one-point distribution.
Indeed, upon knowing that $W_n = W_n(0,0;0,1)$ converges in distribution to a GUE Tracy--Widom random variable, one can infer---from \eqref{first_scaling_relation} and a Taylor expansion of the square root factor---that $W_n(x,s;y,t)$ obeys the same convergence but where the limit has been scaled by $(t-s)^{1/3}$ and then shifted by $-(y-x)^2/(t-s)$.

\subsubsection{The Airy$_2$ process and the space-time Airy sheet}
In the same way, any limiting \textit{multi}-point distributions (in a single spatial coordinate) can be deduced from the limiting behavior of the one-parameter process $y\mapsto W_n(0,0;y,1)$.
For this object and its counterparts in other one-dimensional growth models within the KPZ universality class, the distributional limit is a parabolically shifted \textit{Airy process}, our second canonical object.
In the setting of Brownian LPP, this means $y \mapsto W_n(0,0;y,1)+y^2$ converges in law as $n\to\infty$ to a stationary process $y\mapsto\AA_2(y)$ known as Airy$_2$, introduced in \cite{prahofer-spohn02}.
While this statement remains for most models a conjecture, it has been proved in the sense of finite-dimensional distributions for Poissonian LPP \cite{prahofer-spohn02,borodin-ferrari-sasamoto08I}
and TASEP \cite{borodin-ferrari-prahofer07,borodin-ferrari-sasamoto08II,baik-ferrari-peche10},
and as a functional limit theorem for geometric LPP \cite{johansson03}. 


Given these developments, it is only natural that more recent work has sought to understand the full four-parameter process $(x,s;y,t)\mapsto W_n(x,s;y,t)$.
In this case, the relevant limiting object (again after a parabolic correction) was conjectured in \cite{corwin-quastel-remenik15} to be the so-called \textit{space-time Airy sheet}, whose rigorous construction was left open. 
One view is that the function 
$y\mapsto W_n(0,0;y,t)$ is a Markov process in $t$ and evolves forward from a Dirac delta mass at $t=0$.
If this evolution has a large-$n$ limit, then the driving noise is the Airy sheet.
This perspective was advanced in \cite{matetski-quastel-remenik?}, where the authors allow very general initial conditions---the prelimiting model being TASEP---and derive determinantal formulas for the transition probabilities of the limiting Markov process.

Very recently in \cite{dauvergne-ortmann-virag?}, the Airy sheet was constructed directly from a last passage model on the Airy line ensemble from \cite{corwin-hammond14}, whose top curve is itself the distributional limit of $y\mapsto W_n(0,0;y,1)$.
Featured in \cite{dauvergne-ortmann-virag?} is an extension of the Robinson--Schensted--Knuth correspondence that allows the original Brownian LPP to be mapped to a last passage problem involving Brownian motions conditioned to not intersect.
It is this collection of non-intersecting Brownian motions that converges in a suitable scaling limit to the Airy line ensemble, hence the prospect---ultimately realized---that Brownian LPP has as its limit a last passage problem on this ensemble respecting said convergence.
(Separate works \cite{dauvergne-nica-virag?,dauvergne-virag?} make progress toward showing the same statement for other classical LPP models.)
We will now define the resulting object.


\begin{defn}
The \textit{directed landscape} is a random continuous function $\LL : \R^4_\uparrow \to \R$ almost surely satisfying
\eeq{ \label{max_prop}
\LL(x,s;y,t) = \sup_{z\in\R} [\LL(x,s;z,r)+\LL(z,r;y,t)] \quad \text{for all $(x,s;y,t)\in\R^4_\uparrow$, $r\in(s,t)$},
}
and having the property that for any disjoint intervals $(s_i,t_i)$, $i=1,\dots,n$, the following processes on $\R^2$ are independent and identically distributed:
\eeq{ \label{KPZ_scaling}
(x,y) \mapsto (t_i-s_i)^{-1/3}\LL(x(t_i-s_i)^{2/3},s_i;y(t_i-s_i)^{2/3},t_i), \quad i=1,\dots,n.
}
\end{defn}

The property \eqref{max_prop} means $\LL$ can be thought of as an ``anti-metric" in that it satisfies the reverse triangle inequality.
It was shown in \cite[Lemma 10.3]{dauvergne-ortmann-virag?} that $\LL$ exists and has a unique law determined entirely by the law of the two-parameter process $(x,y)\mapsto \LL(x,0;y,1)$.
The space-time Airy sheet is obtained by the parabolic correction $\AA(x,s;y,t) \coloneqq \LL(x,s;y,t)+\frac{(x-y)^2}{t-s}$. 
For any fixed $x\in\R$, the map $y\mapsto \AA(x,0;y,1)$ is an Airy$_2$ process, and $\AA$ also satisfies space-time stationarity,
\eq{ 
\AA(x,s;y,t) \stackrel{\text{dist}}{=} \AA(x+z,s+r;y+z,t+r) \quad \text{for any $z,r\in\R$.}
}
The following convergence result justifies the consideration of $\LL$ as a canonical object in the KPZ universality class.

\begin{theirthm}
\label{blpp_airy}
\textup{\cite[Thm.~1.5]{dauvergne-ortmann-virag?}} There exists a coupling of Brownian LPP and $\LL$ on some probability space $(\Omega,\FF,\P)$ so that
\eq{
W_n(x,s;y,t) = \LL(x,s;y,t) + o_n(x,s;y,t),
}
where $o_n$ is a random function admitting, for every compact $K\subset \R^4_\uparrow$, a deterministic constant $a>1$ such that $\E(a^{\sup_K|o_n|^{3/4}}) \to 1$.
\end{theirthm}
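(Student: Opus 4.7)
The plan is to adopt an RSK-based approach: pass from Brownian LPP to a last passage problem on a system of non-intersecting Brownian motions, take a scaling limit of the ensemble, and then show the LPP functional is continuous enough to transfer the convergence to passage times. First, I would apply the continuous analogue of the Robinson--Schensted--Knuth correspondence (the ``melon'' map) to the driving Brownian motions $B(\cdot,1),\dots,B(\cdot,n)$ above a growing time horizon. By the O'Connell--Yor identity, the top curve of the resulting non-intersecting ensemble realizes $M(0,0;y,n)$, and more generally each $M((x,s)_n;(y,t)_n)$ is captured as last passage through finitely many top curves. This converts $W_n(u)$ into a maximum over monotone paths through the rescaled melon.

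Second, I would invoke the convergence of the rescaled melon to the parabolic Airy line ensemble of Corwin--Hammond, uniformly on compacts. Combined with a stability estimate for the LPP functional, this yields a candidate limit object: last passage through the Airy line ensemble, parabolically corrected to produce the Airy sheet for times $s = 0$, $t=1$. The full directed landscape $\LL$ on $\R^4_\uparrow$ would then be constructed by patching single-time-slice limits together using the composition rule \eqref{max_prop} and the scaling invariance \eqref{KPZ_scaling}, with independence across disjoint time intervals inherited from the disjointness of the underlying Brownian increments. Pointwise convergence $W_n(u) \to \LL(u)$ would follow from the convergence of the ensembles and the continuity of LPP.

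Third, and most delicately, I would upgrade pointwise convergence to the quantitative coupling of Theorem~\ref{blpp_airy}. Here the Brownian Gibbs property of both the prelimit melon and the Airy line ensemble is essential: it gives local absolute continuity with respect to Brownian bridge, which underpins stretched-exponential tail bounds for the fluctuations of $W_n$ and $\LL$. The exponent $3/4$ in $\E(a^{\sup_K|o_n|^{3/4}})$ is governed by the interplay between the $\exp(-c|x|^3)$ lower tail and the $\exp(-c|x|^{3/2})$ upper tail of Tracy--Widom, combined with a chaining argument over a dense set of space-time points $u \in K$. The main obstacle is establishing these tail bounds \emph{uniformly in $n$} on a fixed compact $K\subset\R^4_\uparrow$. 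That demands (i) showing that near-maximizing paths stay localized, so that the LPP value effectively depends only on finitely many curves of the ensemble, and (ii) controlling the truncation error from discarding low-index curves, via transversal fluctuation bounds stemming from moderate deviations of the Airy$_2$ process.
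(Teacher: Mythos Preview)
This statement is not proved in the paper at all: it is Theorem~A, quoted verbatim from \cite[Thm.~1.5]{dauvergne-ortmann-virag?} and used as a black-box input. The paper offers no argument for it, so there is nothing to compare your proposal against here. Your sketch is a reasonable high-level summary of the Dauvergne--Ortmann--Vir\'ag strategy (melon/RSK, convergence to the Airy line ensemble, Brownian Gibbs, tail bounds and chaining), but for the purposes of this paper the correct ``proof'' is simply the citation.
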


\begin{remark} \label{complete_remark}
We may assume without loss of generality that $\FF$ is complete.
That is, if $A_0 \in\FF$ satisfies $\P(A_0) = 0$, and $A \subset A_0$, then $A\in\FF$. 
Equivalently, if $A_1 \in\FF$ satisfies $\P(A_1) = 1$, and $A \supset A_1$, then $A\in\FF$. 
This assumption will be a technical convenience when we check the measurability of certain events.
\end{remark}

\subsubsection{Geodesics}

In the fully continuous setting of the directed landscape, the analogues of the $n$-polymers from Definition~\ref{polymer_def} are fractal, upward moving paths in $\R^2$ like the one seen in Figure~\ref{composition}.
Meanwhile, the analogue of the $n$-geodesics from Definition \ref{n_geo_def} are the functions parameterizing these paths.
For our purposes, it is no longer important in the limiting setting to differentiate the paths from their natural parameterizations, and so we will just use the latter, as made precise in Definition \ref{landscape_geodesic_def}.

Given a realization of $\LL$, each continuous $\gamma : [s,t]\to\R$ has a \textit{length} $\LL(\gamma)$ given by
\eeq{ \label{length_def}
\LL(\gamma) \coloneqq \inf_{k\in\N}\inf_{s=t_0<t_1<\cdots<t_k=t}\sum_{i=1}^k \LL(\gamma(t_{i-1}),t_{i-1};\gamma(t_i),t_i).
}
This definition is in analogy with that of Euclidean length, except that infima replace suprema because of the anti-metric nature of $\LL$.
If $\gamma(s) = x$ and $\gamma(t) = y$, then the coordinate pairs $(x,s)$ and $(y,t)$ are referred to as the \textit{endpoints} of $\gamma$.
By taking $k = 1$ in \eqref{length_def}, it is clear that $\LL(\gamma) \leq \LL(x,s;y,t)$.
The limiting version of \eqref{blpp_def_2} is now
\eq{
\LL(u) = \sup_{\gamma : (x,s) \to (y,t)} \LL(\gamma), \quad u = (x,s;y,t)\in\R^4_\uparrow,
}
where the supremum is taken over continuous $\gamma: [s,t]\to\R$ such that $\gamma(s) = x$ and $\gamma(t)=y$.

\begin{defn}
\label{landscape_geodesic_def}
Let $u = (x,s;y,t)\in\R^4_\uparrow$ and suppose $\gamma : [s,t]\to\R$ is a continuous function such that $\gamma(s)=x$ and $\gamma(t) = y$. 
If $\LL(\gamma) = \LL(u)$, then we say $\gamma$ is a \textit{geodesic} between $(x,s)$ and $(y,t)$.
Let us write $G_u$ to denote the set of all geodesics from $(x,s)$ to $(y,t)$.
\end{defn}


The collection of all geodesics was termed the \textit{polymer fixed point} in \cite{corwin-quastel-remenik15} and is thought to also be universal to the KPZ class.
For instance, the authors of \cite{corwin-quastel-remenik15} suggest the polymer fixed point might also be realized as the zero-temperature limit of the continuum directed random polymer \cite{alberts-khanin-quastel14II}, and so their use of ``polymer" serves as a nod to positive-temperature models.
This usage is consistent with the convention of reserving ``polymer" to refer to a positive-temperature object (a sample from a \textit{measure} on paths) while keeping to ``geodesic" for a zero-temperature object (a single path of maximal energy).
As this paper deals only with zero-temperature models, we have deviated from this convention in order to clearly distinguish between Definitions~\ref{polymer_def} and \ref{n_geo_def}.

The existence of geodesics is a consequence of \eqref{max_prop}, since one can consider $\gamma$ defined by $\gamma(r) = z_r$, where $z_r$ is a suitably chosen maximizer in \eqref{max_prop}. 
(For instance, see Lemma~\ref{geodesic_construction}.)
Typically the maximizer $z_r$ is unique for each $r\in(s,t)$, in which case
$G_u$ is a singleton; that is, for fixed $u\in\R^4_\uparrow$, we have $|G_u|=1$ with probability one
\cite[Thm.~12.1]{dauvergne-ortmann-virag?}.
It is also a fact that geodesics are typically H\"older-$2/3^-$ continuous in time \cite[Thm.~1.7]{dauvergne-ortmann-virag?} but \textit{not} H\"older-$2/3$ \cite[Thm.~10.2]{dauvergne-sarkar-virag?}.
Nevertheless, as in the prelimit, there may exist random exceptional points for which these statements are not true.

\begin{remark}
Admittedly, it is an abuse of notation to write $\LL(\gamma)$ given that $\LL$ was defined to be a continuous function on $\R^4_\uparrow$.
Nevertheless, this notational convenience should not lead to any confusion, as we will adhere to the following conventions:
\begin{itemize}
\item $u$ always denotes an element of $\R^4_\uparrow$.
\item $x,y,z,w,p,q$ are spatial coordinates (typically $x,z,p$ are associated to initial endpoints of geodesics, and $y,w,q$ to terminal endpoints).
\item $r,s,t$ are temporal coordinates (typically $s \leq r \leq t$).
\item $\vphi,\phi$ are maximizers achieving $M_n(x,i,y,j)$ in \eqref{blpp_def_2}, where $(x,i;y,j)$ will be apparent from context (in particular, $\vphi$ and $\phi$ are right-continuous, non-decreasing $\Z$-valued functions).
\item $\gamma$ denotes a continuous function of time, an object in the limiting model.
\item $\Gamma$ denotes the corresponding prelimiting object (generally a discontinuous function).
\item $a_j\nearrow a$ means $a_j\leq a_{j+1}$ for all $j$, and $a_j$ converges to $a$ as $j\to\infty$; similarly for $a_j\searrow a$.
\end{itemize}
\end{remark}

\begin{figure}[!t]
\centering
\includegraphics[trim=0.9in 0.7in 0.5in 0.5in, clip, width=0.55\textwidth]{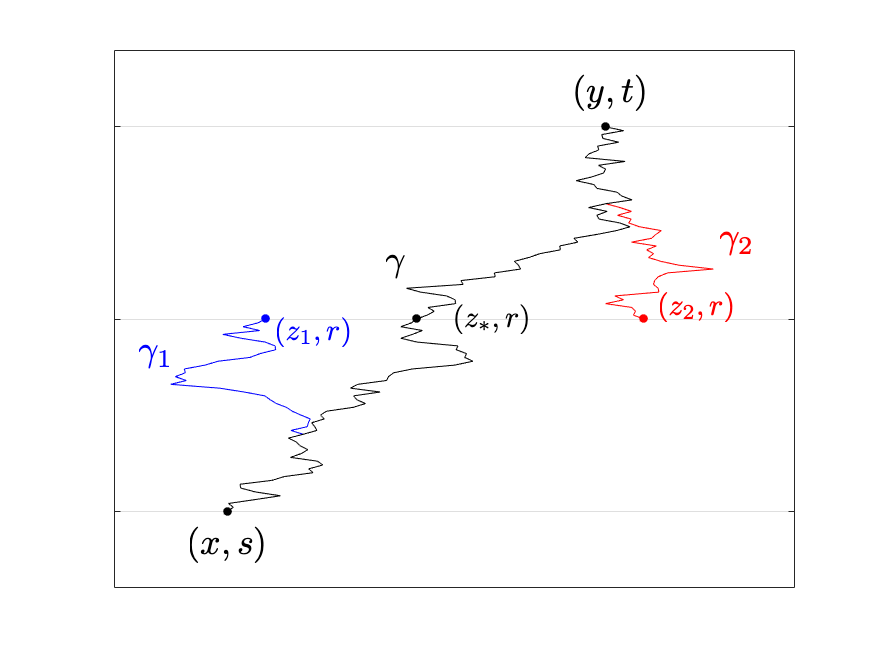}
\caption{Example geodesics in $\LL$.  
Time is visualized in the vertical direction.
Shown above is a geodesic $\gamma$ between $(x,s)$ and $(y,t)$ that passes through $(z_*,r)$, meaning $z_*$ achieves the supremum in \eqref{max_prop}.
For generic $z_1\neq z_*$, geodesics $\gamma_1$ and $\gamma$ will typically coincide for some random length of time, and then remain disjoint after said time.
Similarly, geodesics $\gamma_2$ and $\gamma$ will typically coincide for all times after their first intersection.
}
\label{composition}
\end{figure}

The following result from \cite{dauvergne-ortmann-virag?} confirms that the directed landscape retains the limiting information not only of the passage times in Brownian LPP, but also of the maximizing paths comprising the polymer fixed point.
Recall that for $u = (x,s;y,t)$ and sufficiently large $n$, $G_{n,u}$ denotes the set of $n$-geodesics $\Gamma_{n,u}^{(\vphi)} : [s,t]\to\R$ defined in \eqref{n_geo_eq}, where $\vphi$ is a maximizer in \eqref{blpp_def_2}.

\begin{theirthm}
\label{geodesics_converge}
\textup{\cite[Thm.~1.8 and 13.5]{dauvergne-ortmann-virag?}}
In the coupling of Theorem~\ref{blpp_airy}, there exists an event $\PP$ of probability $1$ such that the following holds for any $u\in\R^4_\uparrow$. 
On the almost sure event $\PP \cap \{|G_u|=1\}$, if $\gamma_u$ is the unique element of $G_u$, and $\Gamma_{n,u}^{(\vphi_n)} \in P_{n,u}$ for each $n$, then
\eq{
\lim_{n\to\infty} \|\Gamma_{n,u}^{(\vphi_n)} - \gamma_u\|_{\infty} = 0,
}
where $\|\cdot\|_\infty$ denotes the sup-norm on $[s,t]$.
\end{theirthm}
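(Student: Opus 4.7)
The plan is to establish the limit via a compactness-plus-uniqueness argument: show that $\{\Gamma_{n,u}^{(\vphi_n)}\}_n$ is pre-compact in the sup-norm topology on $[s,t]$, identify every subsequential limit as a geodesic of $\LL$ from $(x,s)$ to $(y,t)$, and then use $|G_u|=1$ to conclude full-sequence convergence. To obtain a single probability-one event $\PP$ that serves all $u\in\R^4_\uparrow$ simultaneously, I would take $\PP$ to be the event on which the Gaussian tail bound in Theorem~\ref{blpp_airy} upgrades, via Borel--Cantelli applied over a countable compact exhaustion of $\R^4_\uparrow$, to the statement that $\sup_K|o_n|\to 0$ almost surely for every compact $K\subset\R^4_\uparrow$. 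On $\PP$, one has $W_n\to\LL$ locally uniformly, and everything below becomes a deterministic argument on $\PP$.

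For pre-compactness, the first task is to show that the values $\Gamma_{n,u}^{(\vphi_n)}(r)$ stay in a bounded subset of $\R$ independent of $n$. If not, there would exist a sequence of intermediate times $r_n\in(s,t)$ with $|\Gamma_{n,u}^{(\vphi_n)}(r_n)|\to\infty$; the prelimiting composition rule gives
\eq{
W_n(u) = W_n(x,s;\Gamma_{n,u}^{(\vphi_n)}(r_n),r_n) + W_n(\Gamma_{n,u}^{(\vphi_n)}(r_n),r_n;y,t),
}
and passing to the limit along a further subsequence using local uniform convergence and the parabolic decay $\LL(x,s;z,r)+\LL(z,r;y,t)\to-\infty$ as $|z|\to\infty$ would force the right side to $-\infty$, contradicting $W_n(u)\to\LL(u)$. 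Once boundedness is in hand, the same composition argument applied at nearby times $r_1<r_2$ is used quantitatively: a large spatial displacement $|\Gamma_{n,u}^{(\vphi_n)}(r_2)-\Gamma_{n,u}^{(\vphi_n)}(r_1)|$ over a small time window $|r_2-r_1|$ would drag the three-term sum $W_n(x,s;\cdot,r_1)+W_n(\cdot,r_1;\cdot,r_2)+W_n(\cdot,r_2;y,t)$ strictly below $\LL(u)$, which bootstraps to a H\"older-$2/3^-$ modulus of $\Gamma_{n,u}^{(\vphi_n)}$ that is uniform in $n$. Arzel\`a--Ascoli applied to continuous interpolations of the $n$-geodesics then produces a sup-norm pre-compact sequence.

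To identify subsequential limits, suppose $\Gamma_{n_k,u}^{(\vphi_{n_k})}\to\gamma$ in sup-norm for some subsequence $n_k$, with $\gamma:[s,t]\to\R$ continuous. The endpoint conditions $\gamma(s)=x$ and $\gamma(t)=y$ follow from the discretization \eqref{n_geo_eq} in the limit. For any partition $s=t_0<t_1<\dots<t_m=t$, the maximizing property of $\Gamma_{n_k,u}^{(\vphi_{n_k})}$ in \eqref{blpp_def_2} together with the prelimiting composition rule gives, after subtracting the centering terms that define $W_n$,
\eq{
W_{n_k}(u) = \sum_{i=1}^m W_{n_k}\bigl(\Gamma_{n_k,u}^{(\vphi_{n_k})}(t_{i-1}),t_{i-1};\Gamma_{n_k,u}^{(\vphi_{n_k})}(t_i),t_i\bigr) + o(1),
}
where the $o(1)$ absorbs rounding errors at the partition points on the scale $n^{-2/3}$. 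Letting $k\to\infty$ via local uniform convergence $W_{n_k}\to\LL$ combined with the sup-norm convergence of $\Gamma_{n_k,u}^{(\vphi_{n_k})}$ yields
\eq{
\LL(u) = \sum_{i=1}^m \LL(\gamma(t_{i-1}),t_{i-1};\gamma(t_i),t_i).
}
Taking the infimum over partitions and using definition \eqref{length_def} gives $\LL(u)\geq\LL(\gamma)$, and the reverse inequality is automatic; hence $\gamma\in G_u$. On $\PP\cap\{|G_u|=1\}$ this forces $\gamma=\gamma_u$, and since every subsequence has a further sup-norm convergent subsubsequence with limit $\gamma_u$, the full sequence converges.

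The technical heart and main obstacle is the quantitative modulus-of-continuity step inside the pre-compactness argument. A purely soft use of the composition rule gives tightness along subsequences but not the uniform-in-$n$ H\"older regularity required to pass from graph convergence to sup-norm convergence of the parameterizing functions. Producing this modulus requires either a prelimiting transversal-fluctuation estimate with exponential tails uniform in $n$, or a chaining argument bootstrapping the local uniform convergence $W_n\to\LL$ against the $2/3^-$-H\"older regularity of geodesics of $\LL$ established in \cite{dauvergne-ortmann-virag?}. Once this modulus is available, the remaining steps are essentially formal on the event $\PP$.
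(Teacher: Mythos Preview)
This statement is Theorem~\ref{geodesics_converge}, which the paper \emph{cites} from \cite{dauvergne-ortmann-virag?} (their Theorems~1.8 and~13.5) rather than proves; it is used purely as an input, most notably in Section~\ref{disjoint_geo_rarity_proofs}. There is therefore no proof in the present paper against which to compare your proposal.

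That said, your compactness-plus-uniqueness outline is broadly how the cited reference proceeds, and your identification of the uniform-in-$n$ modulus of continuity as the crux is accurate. Two points of friction are worth flagging. First, the event $\PP$ used in this paper is not the local-uniform-convergence event you construct; it is the event that $\LL$ is a \emph{proper landscape} in the sense of Definition~\ref{proper_def}, a regularity condition on the limit object alone. Your Borel--Cantelli event is reasonable and may well contain or coincide with $\PP$ in the coupling, but it is not the same object, and the downstream lemmas in Section~\ref{preliminaries} are phrased in terms of the properness conditions. Second, the $n$-geodesics $\Gamma_{n,u}^{(\vphi_n)}$ are genuinely discontinuous functions on $[s,t]$ (they jump at the times $r_k$ where $\vphi_n$ increments; see \eqref{n_geo_eq} and Lemma~\ref{to_good_pts}), so Arzel\`a--Ascoli does not apply directly, and your passage through ``continuous interpolations'' hides a step: one must separately show the jump sizes tend to zero uniformly, which again reduces to the transversal-fluctuation estimate you already singled out as the main obstacle.
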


The event $\PP$ is that $\LL$ is \textit{proper}, a notion defined in \cite[Sec.~13]{dauvergne-ortmann-virag?} and recalled in Definition~\ref{proper_def}.
We will assume throughout the paper that this event occurs.


\subsection{Motivation for investigating fractal geometry} \label{motivation_section}
Before stating our main results in the upcoming Section~\ref{main_results}, we wish to recognize a broader context in which they can be understood.
The present study is motivated by the view that the fractal properties of the polymer fixed point have much to say about the probabilistic structure of the directed landscape.
Indeed, this philosophy has been borne out in many areas of probability, in particular other universality classes of statistical mechanics.
Let us highlight a few examples, beginning with the classical:
\begin{itemize}

\item The Hausdorff dimension of the zero set of Brownian motion is almost surely $\frac{1}{2}$.
This set can be regarded, of course, as the support of a random measure whose distribution function is the local time at zero.
In fact, after this paper was first posted, it was shown in \cite{ganguly-hegde?} that the function $\ZZ_{x_1,x_2}$ to be defined in Section \ref{main_results} does, in several senses, locally resemble Brownian local time.

\item For Bouchaud trap models with heavy-tailed trapping times, the relevant scaling limits are given by $\alpha$-stable processes which capture the effective time scale of the random walk.
Similar results hold for biased random walks on supercritical trees, and there has been some progress for biased random walks on supercritical percolation clusters; see \cite{benarous-fribergh16} for a survey.
Questions concerning Hausdorff dimension can also be asked of closely related scaling limits for continuous-time random walks on $\R^d$, e.g.~\cite{meerschaert-nane-xiao13}.

\item The now expansive literature on Schramm--Loewner evolutions (SLE) was initiated in \cite{schramm00} to understand fractal curves arising in two-dimensional critical phenomena.
For instance, the interfaces of critical percolation on the triangular lattice converge to SLE$_6$ \cite{smirnov01,smirnov06,camia-newman07}, those of the FK--Ising model converge to SLE$_{16/3}$, and those of the Ising model to SLE$_{3}$ \cite{chelkak-duminil-hongler-kemppainen-smirnov14,duminil13}.
The Hausdorff dimension of SLE curves is known \cite{rohde-schramm05,beffara08}; one application is computing the dimension of the frontier of planar Brownian motion \cite{lawler-schramm-werner01}.
In order to fully capitalize on this dimensional understanding, a key goal has been to verify convergence to SLE in the so-called natural time-parameterization: see \cite{holden-li-sun?} and references therein. 

\item In dynamical critical percolation on the triangular lattice, the set of times at which there exists an infinite cluster almost surely has Hausdorff dimension $\frac{31}{36}$~\cite{garban-pete-schramm10}.
As in the case of Brownian motion, this set has a local time interpretation \cite{hammond-pete-schramm15}.

\end{itemize}

A common purpose served by each of these examples is to give meaning to ``sampling" from fractal sets defined by exceptional behavior.
That is, not only are there salient fractal properties---which capture key statistics of the prelimiting object---but we are also equipped with a measure on the relevant fractal set that allows for a meaningful investigation of what a ``typical" exceptional instance looks like.
This theme plays out in the present work as well, as we will see in the next section.

\subsection{Main results: Hausdorff dimensions and measure description of exceptional sets} \label{main_results}
Our particular interest in endpoints of disjoint geodesics is very much aligned with the study of coalescence of geodesics
for both first and last passage percolation models (the literature is vast; see \cite[Chap.~4 and 5]{auffinger-damron-hanson17} for one starting point).
Somewhat similar to the Busemann functions employed in those settings, a novel object called the \textit{difference weight profile} was studied in \cite{basu-ganguly-hammond21}.
For simplicity, let us now fix the time horizon $[s,t] = [0,1]$ and write $G_{x,y} = G_{(x,0;y,1)}$ for sets of geodesics.
Given $x_1<x_2$, the difference weight profile is the random map $y \mapsto \ZZ_{x_1,x_2}(y) \coloneqq \LL(x_2,0;y,1) - \LL(x_1,0;y,1)$, an almost-surely continuous, non-decreasing function on the real line; see Figure~\ref{diff_wt}.
A striking feature, which is suggested by simulation of the prelimit (see Figure~\ref{simulation}), is that $\ZZ_{x_1,x_2}$ is locally constant almost everywhere in the sense of Lebesgue.
Indeed, the following theorem was shown in \cite{basu-ganguly-hammond21}.
Recall that $y\in \R$ is a point of local variation for a function $f : \R\to\R$ if there exists no open interval containing $y$ on which $f$ is constant.

\begin{figure}
\centering
\includegraphics[trim=0.9in 0.7in 0.5in 0.5in, clip, width=0.55\textwidth]{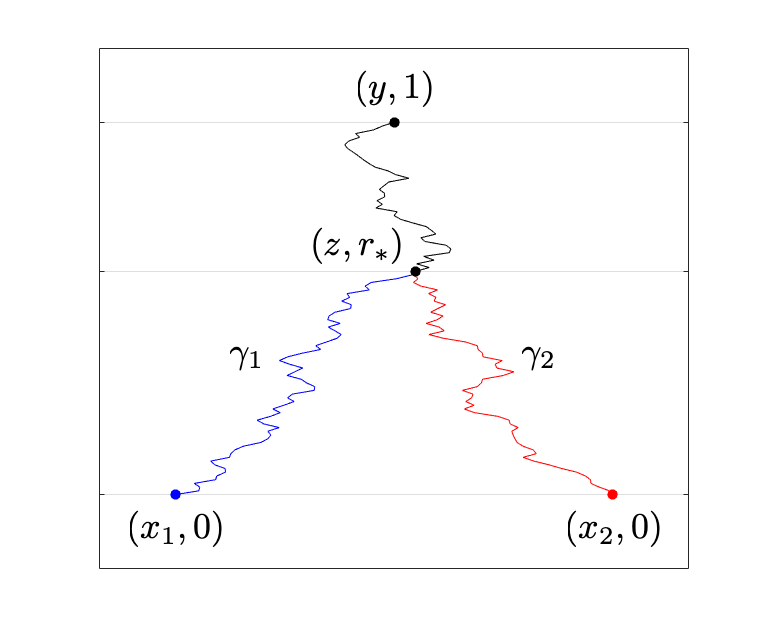}
\caption{Determining the value of $\ZZ_{x_1,x_2}(y)$.
Assuming geodesics exist, one can (using Lemma~\ref{new_geodesics}) always find $\gamma_1\in G_{x_1,y}$ and $\gamma_2\in G_{x_2,y}$ such that $\gamma_1(r)=\gamma_2(r)$ for all $r$ above some $r_*\in(0,1]$, and $\gamma_1(r)<\gamma_2(r)$ for all $r$ below $r_*$.
If $\gamma_1(r_*)=\gamma_2(r_*)=z$, then $\LL(\gamma_1) = \LL(x_1,0;z,r_*) + \LL(z,r_*;y,1)$ while
$\LL(\gamma_2) = \LL(x_2,0;z,r_*) + \LL(z,r_*;y,1)$. 
Therefore, $\ZZ_{x_1,x_2}(y) = \LL(\gamma_2)-\LL(\gamma_1)=\LL(x_2,0;z,r_*) - \LL(x_1,0;z,r_*)$.}
\label{diff_wt}
\end{figure}

\begin{figure}
\centering
\includegraphics[trim=0.25in 0.2in 0.25in 0in, clip, width=0.6\textwidth]{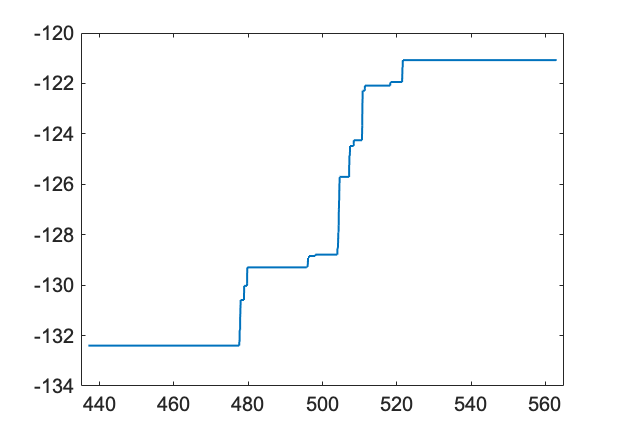}
\caption{A Brownian LPP simulation by Junou Cui, Zoe Edelson, Bijan Fard, and the first author.
With $n = 500$ and space discretized at intervals of size $0.01$, the unscaled difference $M(n^{2/3},0;y,n)-M(-n^{2/3},0;y,n)$ is plotted on the vertical axis against the unscaled location $y$ on the horizontal axis.
Once rescaled according to \eqref{general_process}, this corresponds to the picture in Figure~\ref{diff_wt} with $x_1=-\frac{1}{2}$, $x_2 = \frac{1}{2}$, and $y$ varying between $-\frac{1}{2}$ and $\frac{1}{2}$.
}
\label{simulation}
\end{figure}

\begin{theirthm}
\textup{\cite[Thm.~1.1]{basu-ganguly-hammond21}}
For any fixed $x_1<x_2$, the set of local variation for $\ZZ_{x_1,x_2}$ almost surely has Hausdorff dimension $\frac{1}{2}$.
\end{theirthm}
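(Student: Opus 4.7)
The plan is to work directly with the random Lebesgue--Stieltjes measure $\mu = \dd\ZZ_{x_1,x_2}$ associated to the non-decreasing function $\ZZ_{x_1,x_2}$, whose support is precisely the set of local variation. I would bracket $\dim_H(\operatorname{supp}\mu)$ using a first-moment covering for the upper bound and a Frostman energy computation for the lower bound, with the key inputs being sharp one-point and two-point bounds on the increments of $\ZZ_{x_1,x_2}$ over small intervals.

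The central one-point estimate is
\eq{
\P\bigl(\ZZ_{x_1,x_2}(y+\eps) > \ZZ_{x_1,x_2}(y-\eps)\bigr) \asymp \eps^{1/2},
}
uniformly for $y$ in any fixed compact interval. The exponent $\tfrac12$ arises from balancing two facts. First, $\E[\ZZ_{x_1,x_2}(y+\eps)-\ZZ_{x_1,x_2}(y-\eps)] \asymp \eps$, which follows from the asymptotic linear growth of $\ZZ_{x_1,x_2}$ at slope $2(x_2-x_1)$ (visible e.g.\ from the parabolic shift relating $\LL$ to the Airy sheet) together with smoothness of the one-point law. Second, the increment is at most $O(\eps^{1/2}\log(1/\eps))$ with overwhelming probability, by H\"older-$\tfrac{1}{2}^-$ regularity of $y\mapsto\LL(x,0;y,1)$; this regularity is extractable via Brownian Gibbs resampling bounds on the Airy line ensemble, or transferred from a prelimit estimate through Theorem~\ref{blpp_airy}. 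The ratio $\E[\text{increment}]/\esssup(\text{increment}) \asymp \eps/\eps^{1/2} = \eps^{1/2}$ then yields the claim. Covering a compact interval $I\subset\R$ by $O(\eps^{-1})$ subintervals of length $2\eps$, the expected number meeting $\operatorname{supp}\mu$ is $O(\eps^{-1/2})$, so a Markov/Borel--Cantelli argument gives $\dim_H(\operatorname{supp}\mu\cap I) \leq \tfrac{1}{2}$ almost surely.

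For the matching lower bound I would apply Frostman's lemma to the restricted measure $\mu|_I$, bounding the $\alpha$-energy
\eq{
E_\alpha(\mu|_I) = \iint_{I\times I} |y-y'|^{-\alpha}\,\dd\mu(y)\,\dd\mu(y'), \quad \alpha < \tfrac{1}{2}.
}
A two-point estimate of the form
\eq{
\E\!\left[(\ZZ_{x_1,x_2}(y+\eps)-\ZZ_{x_1,x_2}(y-\eps))(\ZZ_{x_1,x_2}(y'+\eps)-\ZZ_{x_1,x_2}(y'-\eps))\right] \leq C\eps^{3/2}|y-y'|^{-1/2}
}
would give $\E[E_\alpha(\mu|_I)]<\infty$, whereupon Frostman yields $\dim_H\operatorname{supp}\mu \geq \alpha$ on the positive-probability event $\{\mu(I)>0\}$; a zero--one law for the tail-measurable dimension variable upgrades this to an almost sure lower bound of $\tfrac{1}{2}$. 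The hardest step will be this two-point bound, since the local growth of $\ZZ_{x_1,x_2}$ near $y$ is strongly correlated with its growth at a distant $y'$ through shared Busemann/coalescence structure, and the naive product of one-point bounds loses a factor of $\eps^{-1/2}$. My cleanest attack would be to work first in the prelimiting Brownian LPP, exploiting the Brownian Gibbs property of the line ensemble to decouple the contributions at $y$ and $y'$ via a resampling argument conditioned on a favorable event, and then to transfer the bound through Theorem~\ref{blpp_airy}.
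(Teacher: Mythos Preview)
This theorem is not proved in the present paper; it is quoted from \cite{basu-ganguly-hammond21} as an input (note the \texttt{theirthm} environment). So there is no ``paper's own proof'' to compare against here, only the description in Section~\ref{main_results} and Section~\ref{sketch_section} of how \cite{basu-ganguly-hammond21} argued: the upper bound there proceeds via coalescence of geodesics (essentially the rarity of disjoint geodesics landing in a short interval), and the lower bound via local Brownianity of the weight profile, obtained from the Brownian Gibbs property.

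More importantly, your upper-bound argument has a genuine gap. From $\E[X]\asymp\eps$ and $X\lesssim\eps^{1/2}$ (with $X=\ZZ_{x_1,x_2}(y+\eps)-\ZZ_{x_1,x_2}(y-\eps)\geq0$) one obtains
\eq{
\P(X>0)\ \geq\ \frac{\E[X]}{\esssup X}\ \gtrsim\ \eps^{1/2},
}
which is the \emph{lower} bound on the positivity probability, not the upper bound you need for the covering argument. Nothing in your two hypotheses prevents $X$ from being positive with probability close to $1$ while typically tiny (e.g.\ $X\equiv\eps$), so the ratio heuristic cannot yield $\P(X>0)\lesssim\eps^{1/2}$. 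The missing ingredient is precisely the geometric one used in \cite{basu-ganguly-hammond21} and in Section~\ref{upper_bound} of this paper: $X>0$ forces the existence of disjoint geodesics from $(x_1,0)$ and $(x_2,0)$ into an $\eps$-neighborhood of $(y,1)$, and it is the rarity estimate for that event (cf.\ Theorem~\ref{disjoint_geo_rarity} and Corollary~\ref{disjoint_2geo_rarity}) that supplies the required upper bound on the probability.

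Your lower-bound plan via Frostman energy is in the right spirit and close to what \cite{basu-ganguly-hammond21} does, though the specific two-point bound you wrote, $\E[\Delta_\eps(y)\Delta_\eps(y')]\leq C\eps^{3/2}|y-y'|^{-1/2}$, does not by itself integrate to a finite $\alpha$-energy for $\alpha<\tfrac12$ (check the near-diagonal contribution); one needs the sharper behavior $\E[\mu(I)\mu(I')]\lesssim \eps^2|y-y'|^{-1/2}$ for well-separated intervals of length $\eps$, or equivalently a local-Brownian comparison of $\ZZ_{x_1,x_2}$ that controls $\E[\mu(I)^2]$ and the decorrelation simultaneously. That is exactly where the Brownian Gibbs input enters in \cite{basu-ganguly-hammond21}.
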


The proof of the upper bound for this result proceeded, at least heuristically, by examining points of coalescence between geodesics emanating from $(x_1,0)$ and $(x_2,0)$, and terminating at a common point $(y,1)$.
It was suggested that the set of local variation for $\ZZ_{x_1,x_2}$ is a subset of those $y$ for which the coalescence point is $(y,1)$ itself.
In other words, the supposed superset is the set $\DD_{x_1,x_2}\subset\R$ from Theorem~\ref{thm_1}, which consists of locations $y\in\R$ for which there exist two geodesics to $(y,1)$, one originating from $(x_1,0)$ and the other from $(x_2,0)$, that are disjoint except at the terminal point $(y,1)$.
In symbols, we have
\eeq{ \label{D_def}
\DD_{x_1,x_2} \coloneqq \Big\{y \in \R : \exists\, \gamma_1 \in G_{x_1,y}, \gamma_2 \in G_{x_2,y} \text{ such that } \gamma_1(r)<\gamma_2(r) \text{ for all $r\in(0,1)$}\Big\},
} 
where the inequality $\gamma_1(r)<\gamma_2(r)$ comes from the ordering of $x_1<x_2$.
(By planarity, $\gamma_1(r)>\gamma_2(r)$ only occurs if $\gamma_1(r')=\gamma_2(r')$ for some $r'<r$.)
See Figure~\ref{yes_D1} for an illustration. 
Our first main result shows that the heuristic from \cite{basu-ganguly-hammond21} turns out to be correct---moreover, the two sets are equal---and therefore the exceptional set $\DD_{x_1,x_2}$ has Hausdorff dimension one-half.

\begin{thm} \label{main_thm}
For any fixed $x_1<x_2$, the following statements hold almost surely.
\begin{enumerate}[label=\textup{(\alph*)}]

\item \label{main_thm_a} 
The set of local variation for $\ZZ_{x_1,x_2}$ is equal to $\DD_{x_1,x_2}$. 
\item \label{main_thm_b}
The Hausdorff dimension of $\DD_{x_1,x_2}$ is equal to $\frac{1}{2}$.

\end{enumerate}
\end{thm}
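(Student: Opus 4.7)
Since part (b) follows immediately from part (a) together with the quoted theorem of \cite{basu-ganguly-hammond21} (equal sets have equal Hausdorff dimensions), the plan is to prove part (a). Let $\LV$ denote the set of points of local variation of $\ZZ_{x_1,x_2}$. The goal is to show $\LV = \DD_{x_1,x_2}$, established via two separate inclusions.

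\emph{Inclusion $\LV \subseteq \DD_{x_1,x_2}$.} Arguing the contrapositive, suppose $y \notin \DD_{x_1,x_2}$, so that every pair $(\gamma_1, \gamma_2) \in G_{x_1, y} \times G_{x_2, y}$ must coalesce strictly before time $1$. Fix one such pair, and let $r_{**} := \sup\{r \in [0,1] : \gamma_1(r) \neq \gamma_2(r)\} < 1$. For any $r_* \in (r_{**}, 1)$, set $z_* := \gamma_1(r_*) = \gamma_2(r_*)$; the concatenation identity immediately gives $\ZZ_{x_1,x_2}(y) = \LL(x_2, 0; z_*, r_*) - \LL(x_1, 0; z_*, r_*)$. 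My plan is to choose $r_*$ so that this pivot is stable under perturbation of the terminal point, in the sense that there is an open interval $I \ni y$ on which, for every $y' \in I$ and $i \in \{1, 2\}$, some geodesic in $G_{x_i, y'}$ passes through $(z_*, r_*)$. The branch set $J_i(r) := \{y' : \exists \gamma \in G_{x_i, y'} \text{ with } \gamma(r) = \gamma_i(r)\}$ is a closed interval containing $y$ by planarity of the geodesic tree rooted at $(x_i, 0)$. A key claim is that $y$ is a boundary point of $J_i(r)$ for only countably many $r$ (heuristically, the geodesic from $(x_i, 0)$ to $(y, 1)$ passes through only countably many bifurcation points of the tree). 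Picking $r_*$ outside this countable set for both $i = 1, 2$ supplies the desired stability, so $\ZZ_{x_1,x_2}$ is constant on $I$ and $y \notin \LV$.

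\emph{Inclusion $\DD_{x_1,x_2} \subseteq \LV$.} Suppose $y \in \DD_{x_1,x_2}$ with witnesses $\gamma_1(r) < \gamma_2(r)$ on $(0, 1)$. Since $\ZZ_{x_1,x_2}$ is non-decreasing, it suffices to show $\ZZ_{x_1,x_2}(y + \delta) > \ZZ_{x_1,x_2}(y - \delta)$ for every $\delta > 0$. For small $\eps > 0$ and $z_i := \gamma_i(1 - \eps)$ (with $z_1 < z_2$), I would pivot through the time-$(1-\eps)$ slice: using the geodesic from $(x_2, 0)$ to $(y - \delta, 1)$ and the geodesic from $(x_1, 0)$ to $(y + \delta, 1)$ as exact pivots, with their counterparts to $(y \pm \delta, 1)$ serving as comparison paths, one derives
\eq{
\ZZ_{x_1, x_2}(y + \delta) - \ZZ_{x_1, x_2}(y - \delta) \geq \bigl[\LL(\wt z_2, 1-\eps; y + \delta, 1) - \LL(\wt z_2, 1-\eps; y - \delta, 1)\bigr] - \bigl[\LL(\wt z_1, 1-\eps; y + \delta, 1) - \LL(\wt z_1, 1-\eps; y - \delta, 1)\bigr],
}
where $\wt z_i \to z_i$ as $\delta \to 0$. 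Substituting the parabolic correction $\LL(z, s; w, t) = \AA(z, s; w, t) - (w - z)^2/(t - s)$ converts the right-hand side into a deterministic drift of order $\delta(\wt z_2 - \wt z_1)/\eps$ plus a difference of Airy sheet increments; by KPZ scaling, the latter is of order $\sqrt{\delta}$ independently of $\eps$. Choosing $\eps = \eps(\delta) \ll \sqrt{\delta}(z_2 - z_1)$ makes the drift a large multiple of $\sqrt{\delta}$, dominating the Airy fluctuation and forcing the right-hand side to be strictly positive. The main obstacle is making this drift-versus-noise comparison pointwise-deterministic on the probability-one event under consideration, which will require a quantitative modulus-of-continuity estimate for the Airy sheet---obtainable either from the coupling in Theorem~\ref{blpp_airy} combined with Brownian LPP regularity, or from the disjoint-geodesic tail bound developed elsewhere in this paper.
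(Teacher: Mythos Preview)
Both inclusions in your proposal depart from the paper's argument, and the second contains a real gap.

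For $\LV \subseteq \DD_{x_1,x_2}$, the paper takes the leftmost $\gamma^\lt \in G_{x_1,y}$ and rightmost $\gamma^\rt \in G_{x_2,y}$, which meet at some $r_*\in(0,1)$, and then produces monotone sequences $y_j^\lt\nearrow y$, $y_j^\rt\searrow y$ with geodesics $\gamma_j^\lt,\gamma_j^\rt$ converging uniformly to $\gamma^\lt,\gamma^\rt$ (Lemma~\ref{limiting_geodesics}). The crucial input is Theorem~\ref{no_arbitrary_closeness}: geodesics that are uniformly close on $[r_*,1]$ must intersect there, and then leftmost-/rightmost-ness forces $\gamma_j^\lt=\gamma^\lt$ and $\gamma_j^\rt=\gamma^\rt$ on all of $[0,r_*]$. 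Hence $\gamma_j^\lt(r_*)=\gamma_j^\rt(r_*)$ and Lemma~\ref{constant_lemma} finishes. Your branch-set approach (choosing $r_*$ off a countable set of bifurcation times) is heuristically reasonable, but the countability claim is not proved and is not available as a black box here; the paper instead spends Section~\ref{disjoint_geo_rarity_proofs} developing Theorem~\ref{no_arbitrary_closeness} precisely to avoid such tree-structure assertions.

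For $\DD_{x_1,x_2} \subseteq \LV$, the paper's argument is short and purely deterministic: if $\ZZ_{x_1,x_2}$ is constant on $(y^\lt,y^\rt)\ni y$, pick $\gamma_a\in G_{x_1,y^\rt}$ and $\gamma_b\in G_{x_2,y^\lt}$, which must cross at some $(z,r)$ by planarity. The constancy forces equality in \eqref{max_prop}, so the swapped concatenations $\gamma_a|_{[0,r]}\oplus\gamma_b|_{[r,1]}\in G_{x_1,y^\lt}$ and $\gamma_b|_{[0,r]}\oplus\gamma_a|_{[r,1]}\in G_{x_2,y^\rt}$ are geodesics. Then the ordering events $\OO_{x_1},\OO_{x_2}$ squeeze every $\gamma_1\in G_{x_1,y}$ and $\gamma_2\in G_{x_2,y}$ through $(z,r)$, so $y\notin\DD_{x_1,x_2}$. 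Your drift-versus-fluctuation scheme runs into two difficulties. First, $z_1,z_2=\gamma_1(1-\eps),\gamma_2(1-\eps)$ depend on $\eps$ with $z_2-z_1\to 0$ as $\eps\to 0$ (both geodesics terminate at $y$), so the prescription $\eps\ll\sqrt{\delta}(z_2-z_1)$ is circular and it is not clear the drift can be made to dominate. Second, and more fundamentally, you need the strict inequality $\ZZ_{x_1,x_2}(y+\delta)>\ZZ_{x_1,x_2}(y-\delta)$ to hold for \emph{every} $y\in\DD_{x_1,x_2}$ on a single probability-one event, yet all your parameters $\eps,z_i,\wt z_i$ are random and $y$-dependent; turning a modulus-of-continuity bound into a statement uniform over an uncountable fractal set is exactly the hard step you have not addressed. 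The paper sidesteps all of this with the swapping argument above.
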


Since $\ZZ_{x_1,x_2}(y)$ is non-decreasing in $y$ by \cite[Thm.~1.1(1)]{basu-ganguly-hammond21}---a consequence of planarity---another perspective is that $\ZZ_{x_1,x_2}$ is the distribution function of a random measure $\mu_{x_1,x_2}$ on the real line, in the sense that
\begin{subequations}
\label{mu_var_def}
\eeq{ \label{mu_1var_def}
\mu_{x_1,x_2}([y_1,y_2])
&= \ZZ_{x_1,x_2}(y_2) - \ZZ_{x_1,x_2}(y_1) \\
&= \LL(x_2,0;y_2,1)+\LL(x_1,0;y_1,1)-\LL(x_1,0;y_2,1)-\LL(x_2,0;y_1,1).
}
In this language, the result of \cite{basu-ganguly-hammond21} is that the support of $\mu_{x_1,x_2}$ is a random perfect, nowhere dense set of Hausdorff dimension one-half, and Theorem~\hyperref[main_thm_a]{\ref*{main_thm}\ref*{main_thm_a}} says this set is equal to $\DD_{x_1,x_2}$.

A slightly more general perspective is that the final expression in \eqref{mu_1var_def} defines a measure $\mu$ on $\R^2$, namely
\eeq{ \label{mu_2var_def}
\mu([x_1,x_2]\times[y_1,y_2]) = \mu_{x_1,x_2}([y_1,y_2]).
}
\end{subequations}
In other words, by jointly varying the initial location $x$ and the terminal location $y$, one obtains a measure-theoretic encoding of $\LL$ on the time horizon $[0,1]$.
Then one can ask if the statements from before regarding $\mu_{x_1,x_2}$ have analogues for $\mu$.
Indeed, by modifications to the proof in \cite{basu-ganguly-hammond21}, 
one can show---as we do in Propositions~\ref{supp_thm} and \ref{upper_bd_thm}---that the support of $\mu$ also has Hausdorff dimension one-half.
Furthermore, in place of $\DD_{x_1,x_2}$, the related exceptional set is the one from Theorem~\ref{thm_2}:
\eeq{ \label{D2_def}
\DD \coloneqq \Big\{(x,y) \in \R^2 : \exists\, \gamma_1,\gamma_2 \in G_{x,y} \text{ such that } \gamma_1(r)<\gamma_2(r) \text{ for all $r\in(0,1)$}\Big\}.
}
In words, $\DD$ is the set of all pairs $(x,y)$ admitting two geodesics between $(x,0)$ and $(y,1)$ which coincide only at the endpoints; see Figure~\ref{yes_D2}.
We can now state our second main result.

\begin{thm} \label{main_thm_2}
The following statements hold almost surely.
\begin{enumerate}[label=\textup{(\alph*)}]

\item \label{main_thm_2_a}
The support of $\mu$ is equal to $\DD$.

\item \label{main_thm_2_b}
The Hausdorff dimension of $\DD$ is equal to $\frac{1}{2}$.

\end{enumerate}
\end{thm}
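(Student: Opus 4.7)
My overall strategy is to establish part (a) first---which relates the measure-theoretic object $\mu$ to the geometric set $\DD$---and then deduce part (b) by combining (a) with the univariate results of Theorem~\ref{main_thm}, together with a tail estimate for the upper bound. For (a), I would prove the two inclusions separately. The forward inclusion $\DD \subseteq \mathrm{supp}(\mu)$ is the easier direction: given $(x,y) \in \DD$ with witnessing disjoint geodesics $\gamma_1 < \gamma_2$ from $(x,0)$ to $(y,1)$, planarity-based monotonicity of leftmost and rightmost geodesics in their starting endpoints ensures that for every $x_1 < x < x_2$, the leftmost geodesic from $(x_1,0)$ to $(y,1)$ lies weakly below $\gamma_1$ and the rightmost geodesic from $(x_2,0)$ to $(y,1)$ lies weakly above $\gamma_2$, so that $y \in \DD_{x_1,x_2}$. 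By Theorem~\ref{main_thm}(a), $y$ is in the support of $\mu_{x_1,x_2}$, and the identification \eqref{mu_2var_def} then shows $\mu\bigl((x_1,x_2) \times (y-\delta,y+\delta)\bigr) > 0$ for all $\delta > 0$; hence $(x,y) \in \mathrm{supp}(\mu)$.

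For the reverse inclusion $\mathrm{supp}(\mu) \subseteq \DD$, I would argue by compactness. If $(x,y) \in \mathrm{supp}(\mu)$, then for each $n \in \N$ the rectangle $(x-1/n,x+1/n) \times (y-1/n,y+1/n)$ has positive $\mu$-mass, equivalently $\mu_{x-1/n,x+1/n}\bigl((y-1/n,y+1/n)\bigr) > 0$. Theorem~\ref{main_thm}(a) then provides $y_n \in (y-1/n,y+1/n) \cap \DD_{x-1/n,x+1/n}$, together with disjoint geodesics $\alpha^{(n)} : (x-1/n,0) \to (y_n,1)$ and $\beta^{(n)} : (x+1/n,0) \to (y_n,1)$. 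H\"older regularity of geodesics \cite[Thm.~1.7]{dauvergne-ortmann-virag?} combined with continuity of $\LL$ allows extraction of subsequential limits $\alpha, \beta \in G_{x,y}$ with $\alpha \leq \beta$ on $[0,1]$. The main obstacle is to preclude coalescence at an interior $r_* \in (0,1)$ in the limit: if $\alpha(r_*) = \beta(r_*)$, then the strictly positive gaps $\beta^{(n)}(r_*) - \alpha^{(n)}(r_*)$ would have to vanish, and ruling this out appears to require a rigidity input---drawn from KPZ-scale fluctuation bounds and the disjoint-geodesics tail estimate developed later in the paper---asserting that a pair of disjoint geodesics cannot collapse onto a single interior point while retaining macroscopic separation elsewhere.

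For part (b), the lower bound $\dim_H(\DD) \geq \frac{1}{2}$ is now essentially immediate. Fix any $x_1 < x_2$ and consider the projection $\pi_y : \mathrm{supp}(\mu) \cap ([x_1,x_2] \times \R) \to \R$, $(x,y) \mapsto y$, which is $1$-Lipschitz. Its image contains $\DD_{x_1,x_2}$: for $y \in \DD_{x_1,x_2}$, positive $\mu_{x_1,x_2}$-mass on every open interval around $y$ yields positive $\mu$-mass on every open rectangle $(x_1,x_2) \times (y-\epsilon,y+\epsilon)$, and closedness of $\mathrm{supp}(\mu) = \DD$ produces a companion $x \in [x_1,x_2]$ with $(x,y) \in \DD$. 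Since a $1$-Lipschitz map cannot increase Hausdorff dimension, $\dim_H(\DD) \geq \dim_H(\DD_{x_1,x_2}) = \frac{1}{2}$ by Theorem~\ref{main_thm}(b).

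The upper bound $\dim_H(\DD) \leq \frac{1}{2}$ is handled by a covering argument using the novel disjoint-geodesics tail estimate advertised as input (ii) in the abstract. Partitioning a bounded region of $\R^2$ into $O(\epsilon^{-2})$ axis-aligned boxes of side $\epsilon$, the tail estimate should bound the probability that a given box intersects $\DD$ by $O(\epsilon^{3/2})$, reflecting the KPZ exponent for two disjoint geodesics between small windows. The expected number of intersecting boxes is therefore $O(\epsilon^{-1/2})$, so the $s$-dimensional Hausdorff content of $\DD$ in the region is of order $\epsilon^{s-1/2}$ for any $s > \frac{1}{2}$, vanishing as $\epsilon \to 0$; a Borel--Cantelli argument along a geometric sequence of scales upgrades this to an almost-sure bound. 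The hardest substantive step of the entire argument is establishing the disjoint-geodesics tail estimate itself for the directed landscape, extending the prelimiting Brownian LPP estimates of \cite{hammond20}.
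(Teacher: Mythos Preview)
Your outline for part (b), and for the inclusion $\DD \subseteq \Supp(\mu)$ in part (a), matches the paper's approach up to a reorganization: you bootstrap the forward inclusion from Theorem~\ref{main_thm}(a), whereas the paper proves both equalities in parallel via a direct crossing-geodesics computation (see~\eqref{crossing_step}). Your route is fine provided you restrict to rational $x_1 < x_2$ so that the null sets underlying Theorem~\ref{main_thm}(a) can be intersected countably; as written, you invoke it at the random endpoints $x \pm 1/n$, which is not immediately justified.

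The genuine gap is in the reverse inclusion $\Supp(\mu) \subseteq \DD$. Your compactness argument extracts limits $\alpha \le \beta$ in $G_{x,y}$ from the disjoint sequences $\alpha^{(n)}, \beta^{(n)}$, and you correctly flag that the obstacle is ruling out $\alpha(r_*) = \beta(r_*)$ at an interior time. But the rigidity input you describe---that disjoint geodesics ``cannot collapse onto a single interior point while retaining macroscopic separation elsewhere''---is not the right formulation. There is no guarantee of macroscopic separation elsewhere: the limits could coincide entirely (if $|G_{x,y}| = 1$), or touch only at $r_*$ while remaining distinct; in neither case does mere disjointness of the approximants yield a contradiction. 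Applying a no-arbitrary-closeness statement on the full interval $[0,1]$ gives only $\|\alpha - \beta\|_\infty > 0$, not disjointness on $(0,1)$, and shrinking to a subinterval around $r_*$ runs into the circularity that the threshold $\eps$ depends on the compact set, which in turn depends on the subinterval length.

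The paper avoids this by arguing the contrapositive. Assume $(x,y) \notin \DD$, so the leftmost and rightmost geodesics $\gamma^\lt, \gamma^\rt \in G_{x,y}$ meet at some $r_* \in (0,1)$. The actual rigidity input is Theorem~\ref{no_arbitrary_closeness}: geodesics with endpoints in a fixed compact $K \subset \R^4_\uparrow$ that stay uniformly $\eps$-close must intersect. Applied separately on $[0,r_*]$ and on $[r_*,1]$ to the monotone approximants $\gamma_j^\lt \nearrow \gamma^\lt$ and $\gamma_j^\rt \searrow \gamma^\rt$ furnished by Lemma~\ref{limiting_geodesics}, this forces $\gamma_j^\lt$ to touch $\gamma^\lt$ both before and after $r_*$; the leftmost property (via Lemma~\ref{new_geodesics}) then pins $\gamma_j^\lt(r_*) = \gamma^\lt(r_*)$. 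Likewise $\gamma_j^\rt(r_*) = \gamma^\rt(r_*)$, so $\gamma_j^\lt$ and $\gamma_j^\rt$ meet for large $j$, and Lemma~\ref{constant_lemma}(b) gives $(x,y) \notin \Supp(\mu)$. The essential difference from your sketch is that Theorem~\ref{no_arbitrary_closeness} is not used to preclude collapse of a limit, but to force approximating geodesics through a \emph{prescribed} point.
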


On a technical note, we mention that \eqref{mu_1var_def} and \eqref{mu_2var_def} prescribe well-defined measures given the almost sure event $\PP$; see Definition~\hyperref[proper_def_5]{\ref*{proper_def}\ref*{proper_def_5}}.
Off of $\PP$, one can simply take $\mu_{x_1,x_2}$ and $\mu$ to be zero measures.
Let us henceforth write $\Supp(\cdot)$ for the support of a measure.

\begin{remark}
\label{all_times_remark}
Upon rescaling and shifting according to \eqref{KPZ_scaling}, 
Theorems~\ref{main_thm} and \ref{main_thm_2} are seen to hold when the time horizon $[0,1]$ is replaced by any $[s,t]$, $s<t$.
Nevertheless, the almost sure events on which the statements hold could depend on $s$ and $t$, suggesting the possibility that the statements are false for some random times.
In Proposition \ref{containment_thm}, however, we rule out this possibility for Theorems~\hyperref[main_thm_a]{\ref*{main_thm}\ref*{main_thm_a}} and \hyperref[main_thm_2_a]{\ref*{main_thm_2}\ref*{main_thm_2_a}}.
We expect that Theorems~\hyperref[main_thm_b]{\ref*{main_thm}\ref*{main_thm_b}} and \hyperref[main_thm_2_b]{\ref*{main_thm_2}\ref*{main_thm_2_b}} will also be true simultaneously on all time horizons, although proving so will require certain estimates that go beyond the scope of this paper.
\end{remark}

\begin{remark} \label{nothing_else_needed}
It might seem natural to also consider the case when only the terminal location $y$ is fixed, while the starting location(s) is allowed to vary.
This perspective, however, is an ineffective way of studying the fractal geometry of the directed landscape. 
Indeed, it is possible to show that for any fixed $y\in\R$, almost surely $y\notin\DD_{x_1,x_2}$ for every $x_1\leq x_2$. 
Moreover, for the random exceptional $y$ belonging to some $\DD_{x_1,x_2}$, one trivially has $y\in\DD_{x_1',x_2'}$ for all $x_1'\leq x_1$ and $x_2'\geq x_2$, due to Lemma~\ref{reordering_lemma}.
From these observations, it is clear that a more refined description of the exceptional endpoints is obtained only by either fixing $x_1<x_2$ and varying $y$, or imposing $x_1=x_2=x$ and varying $(x,y)$.

Varying the time coordinates, on the other hand, raises a very interesting question unaddressed by this manuscript.
Namely, if we fix $x_1,x_2,y$ and consider the geodesics between $(x_1,s)$, $(x_2,s)$ and $(y,t)$, then can one describe the exceptional set of $s$ and $t$ for which the geodesics are disjoint?
Some of our methods can be adapted to this setting, but the situation is complicated in part by the absence of a monotonicity property (recall that $\ZZ_{x_1,x_2}(y)$ is non-decreasing in $y$).
\end{remark}

Interestingly, a measure similar to $\mu$ was studied in \cite{janjigian-rassoul20} in the context of planar LPP and positive-temperature directed polymers, which are fully discrete models.
That measure is also related via its support to exceptional disjointness of semi-infinite geodesics \cite{janjigian-rassoul-seppalainen?}, and Theorem~\hyperref[main_thm_2_a]{\ref*{main_thm_2}\ref*{main_thm_2_a}} bears a striking resemblance to \cite[Thm.~3.1]{janjigian-rassoul-seppalainen?}. 
Furthermore, one can consider similar objects in the prelimiting Brownian LPP.
Using classical facts of Brownian motion, it is possible to show that for any $i\in\Z$, the set of $x\in\R$ for which Brownian LPP admits disjoint semi-infinite geodesics starting at $(x,i)$ and having the same asymptotic direction, has Hausdorff dimension one-half \cite{seppalainen-sorensen?}.
This exceptional set of initial points, however, is dense and thus of a different nature than the sets we consider in Theorems~\ref{main_thm} and \ref{main_thm_2}.

\subsection{Comments on the proofs, and key inputs} \label{sketch_section}

The central issue and main novelty of this paper is proving the equalities claimed in Theorems~\hyperref[main_thm_a]{\ref*{main_thm}\ref*{main_thm_a}} and \hyperref[main_thm_2_a]{\ref*{main_thm_2}\ref*{main_thm_2_a}}.
We can summarize the argument for $\Supp(\mu)\subset\DD$ 
in the following broad strokes (the argument for the corresponding containment in \hyperref[main_thm_a]{\ref*{main_thm}\ref*{main_thm_a}} is similar):
\begin{itemize}
\item If $(x,y)\notin\DD$, then the leftmost geodesic $\gamma^\lt$ making the journey $(x,0)\to(y,1)$ must have non-trivial intersection with the rightmost geodesic.
An example is shown in Figure~\ref{pre_2var}. 
\item We prove in Lemma~\ref{limiting_geodesics} the general fact that if $y_j^\lt \nearrow y$ and $x_j^\lt\nearrow x$, then there is a corresponding sequence of geodesics $\gamma_j^\lt$ traveling $(x_j,0)\to(y_j,1)$ that converge uniformly to $\gamma^\lt$; similarly for $y_j^\rt\searrow y$, $x_j^\rt\searrow x$, and $\gamma^\rt$.
Therefore, by choosing $x^\lt < x < x^\rt$ and $y^\lt<y<y^\rt$ sufficiently close, we can find $\wt\gamma^\lt$ and $\wt\gamma^\rt$ approximating $\gamma^\lt$ and $\gamma^\rt$ to any desired precision.
\item  The critical observation, stated as Theorem~\ref{no_arbitrary_closeness}, is that if the endpoints of two geodesics belong to a common compact set, then the geodesics cannot approximate each other arbitrarily well without intersecting.
Therefore, $(x^\lt,x^\rt)$ and $(y^\lt,y^\rt)$ can be chosen so that  $\wt\gamma^\lt$ intersects $\gamma^\lt$, and $\wt\gamma^\rt$ intersects $\gamma^\rt$.
\item By forcing this intersection to occur at a suitably chosen location based on the non-trivial intersection of $\gamma^\lt$ and $\gamma^\rt$ (see Figure~\ref{post_2var}), we deduce that $\wt\gamma^\lt$ and $\wt\gamma^\rt$ do themselves intersect.
\item The proof is then completed by appealing to the observation---originally made in \cite{basu-ganguly-hammond21} and stated locally as Lemma~\ref{constant_lemma}---that $\ZZ_{x^\lt,x^\rt}$ is constant on $[y^\lt,y^\rt]$ whenever there are intersecting geodesics $\wt\gamma^\lt$ and $\wt\gamma^\rt$ making the respective journeys $(x^\lt,0)\to(y^\lt,1)$ and $(x^\rt,0)\to(y^\rt,1)$.
The reason for this constancy is outlined in Figure~\ref{diff_wt_constant}.
\end{itemize}
Meanwhile, the argument for $\Supp(\mu)\supset\DD$ is simpler (and the corresponding argument for~\hyperref[main_thm_a]{\ref*{main_thm}\ref*{main_thm_a}} is again similar):
\begin{itemize}
\item If $(x,y) \notin \Supp(\mu)$, then there are $x^\lt<x<x^\rt$ and $y^\lt<y<y^\rt$ such that $\ZZ_{x^\lt,x^\rt}(y^\lt) = \ZZ_{x^\lt,x^\rt}(y^\rt)$.
\item Any two geodesics making the journeys $(x^\lt,0)\to(y^\rt,1)$ and $(x^\rt,0)\to(y^\lt,1)$ necessarily intersect at some point $(z,r)$.
As a consequence of the first bullet point, we can produce by concatenation two geodesics traveling $(x^\lt,0)\to(y^\lt,1)$ and $(x^\rt,0)\to(y^\rt,1)$ and each passing through $(z,r)$; see \eqref{crossing_step} and the text that follows.
\item By the planar ordering of geodesics, we conclude that any geodesic traveling $(x,0)\to(y,1)$ must also pass through $(z,r)$.
In particular, $(x,y)$ does not belong to the exceptional set $\DD$.
\end{itemize}
It is worth emphasizing that the arguments summarized thus far work simultaneously on all time horizons.
That is, wherever an almost sure statement is needed, the probability-one set on which it holds does not depend on the choice of $[0,1]$ as the interval of interest (see Remark \ref{all_times_remark}).

Given that $\Supp(\mu_{x_1,x_2})$ and $\Supp(\mu)$ each have Hausdorff dimension one-half, Theorems~\hyperref[main_thm_b]{\ref*{main_thm}\ref*{main_thm_b}} and \hyperref[main_thm_2_b]{\ref*{main_thm_2}\ref*{main_thm_2_b}} are immediate from 
\hyperref[main_thm_a]{\ref*{main_thm}\ref*{main_thm_a}} and \hyperref[main_thm_2_a]{\ref*{main_thm_2}\ref*{main_thm_2_a}}.
%
Nevertheless, in recognition of the fact that the results appearing in \cite{basu-ganguly-hammond21} are only for $\mu_{x_1,x_2}$, we do separately check the required statements for $\mu$.
Namely, the dimension lower bound argument carried out in \cite{basu-ganguly-hammond21}---which used the local Gaussianity of weight profiles \cite{hammond19II}, which in turn is a consequence of the Brownian Gibbs property \cite{corwin-hammond14,hammond??} enjoyed by the parabolic Airy line ensemble and its prelimit---could be replicated here, but we instead present in Section~\ref{supp_thm_proof} a short proof that the Hausdorff dimension of $\Supp(\mu)$ is at least that of $\Supp(\mu_{x_1,x_2})$.
This statement is simply a consequence of the fact that $\mu_{x_1,x_2}$ is a projection of $\mu$.

This indirect lower bound provides sufficient information to conclude Theorem~\hyperref[main_thm_2_b]{\ref*{main_thm_2}\ref*{main_thm_2_b}} from~\hyperref[main_thm_2_a]{\ref*{main_thm_2}\ref*{main_thm_2_a}}, once  
we independently show the matching upper bound for the Hausdorff dimension of $\Supp(\mu)$; this is done in Section \ref{upper_bound_proof} in a manner similar to the approach of \cite{basu-ganguly-hammond21}.
The central idea is captured in Figure~\ref{upper_bound_fig} and is briefly described as follows:
\begin{itemize}
\item Let us restrict our attention to a bounded square $[-R,R]^2$.
Suppose $(x.y) \in \DD \cap [-R,R]^2$, and let $x-\eps < x_1 < x < x_2 < x+\eps$ and $y-\eps<y_1<y<y_2<y+\eps$.
Because $(x,y) \in \DD$, planarity guarantees that any geodesic $\gamma_1$ traveling $(x_1,0)\to (y_1,1)$ is disjoint from any geodesic $\gamma_2$ traveling $(x_2,0)\to(y_2,1)$, as shown in Figure~\ref{upper_bound_fig}.
Moreover, this holds for arbitrarily small $\eps>0$.
\item We have thus identified two small intervals $I = (x-\eps,x+\eps)$, $J=(y-\eps,y+\eps)$ that admit two disjoint geodesics starting in $I\times\{0\}$ and ending in $J\times\{1\}$.
A required input is that as $\eps\to0$, the likelihood of this event for given $x$ and $y$ is, to leading order, bounded from above by $\eps^{3/2}$.
This fact is stated as Corollary~\ref{disjoint_2geo_rarity}.
\item Now we cover $[-R,R]^2$ with order $\eps^{-2}$ many pairs of intervals $(I,J)$ of radius $\eps$.
The expected number of pairs of these intervals with the above property is now seen to be at most $\eps^{-2}\cdot\eps^{3/2} =\eps^{-1/2}$.
Therefore, the box-counting dimension of $\DD \cap [-R,R]^2$, which always serves as an upper bound for the Hausdorff dimension, is at most $\frac{1}{2}$.
\end{itemize}



As a slightly finer-scale comment,
this paper does marry arguments for the limiting model with inputs previously known or verified only in the prelimiting one, namely Brownian LPP.
As such, there are a number of important facts requiring extension to the limiting setting.
Therefore, in Sections~\ref{inputs_2} and \ref{inputs_1} below, we state several other new results concerning the directed landscape and the polymer fixed point.
These inputs, which are proved in Section~\ref{disjoint_geo_rarity_proofs}, may be of independent interest as tools in or inspiration for future works.
Section~\ref{preliminaries} contains several more input facts; these are straightforward statements about geodesics that will not be highlighted here.

\subsubsection{Convergence of polymers} \label{inputs_2}

Owing to the novelty of the construction in \cite{dauvergne-ortmann-virag?}, our first set of inputs work to build one bridge (of certainly many more) from the directed landscape to previously studied objects. 
In particular, we focus on Theorem~\ref{geodesics_converge}, which addresses the convergence of $n$-geodesics in Brownian LPP to their continuous counterparts in the directed landscape.
While these particular prelimiting objects, defined via \eqref{n_geo_eq}, enable the treatment of geodesics as functions, they are somewhat less natural from the view of geodesics as planar paths.
For this latter perspective, it is desirable to simply consider the polymers from Definition~\ref{polymer_def}, i.e., the piecewise linear paths obtained by applying the scaling map $R_n$ to the Brownian LPP staircase paths (recall Figure~\ref{blpp_fig}).
Indeed, the map $R_n$ is intrinsic to the coupling of Brownian LPP and the directed landscape in the first place.

On a practical level, many estimates for Brownian LPP are stated for $n$-polymers rather than $n$-geodesics; for instance, see \cite{hammond??,hammond19II,hammond19I,hammond20}.
In order to leverage these results to prove statements in the limiting setting, it will be useful to know that $n$-polymers share the same limit as $n$-geodesics.
For example, the upcoming Theorem~\ref{disjoint_geo_rarity} asks for probabilistic control on disjointness for geodesics.
Unfortunately, disjointness of $n$-polymers---for which we have the corresponding information---is not equivalent to disjointness of $n$-geodesics. 
This is because the time parameterization \eqref{parameterized_time} used in \eqref{n_geo_eq} depends on the spatial coordinates $x$ and $y$, meaning that distinct $u_1 = (x_1,s;y_1,t), u_2=(x_2,s;y,t)\in\R^4_\uparrow$ can admit $n$-geodesics $\Gamma_{(n,u_1)}^{(\vphi)}$, $\Gamma_{(n,u_2)}^{(\phi)}$ that intersect, 
\eq{
\vphi(L_{n,u_1}(r)) = \phi(L_{n,u_2}(r)) \quad \text{for some $r\in[s,t]$,}
} 
even if the corresponding $n$-polymers $R_n(\vphi)$, $R_n(\phi)$ do not:
\eq{
\vphi(z)\neq\phi(z) \quad \text{for all $z\in[sn+2n^{2/3}x_1,tn+2n^{2/3}y_1)\cap[sn+2n^{2/3}x_2,tn+2n^{2/3}y_2)$}.
}
In this and other circumstances, the next theorem and corollary can help translate between the two prelimiting objects.

Let $u = (x,s;y,t)\in\R^4_\uparrow$ and $\vphi : [sn+2n^{2/3}x,tn+2n^{2/3}y)\to\llbrack \lfloor sn\rfloor,\lfloor tn\rfloor\rrbrack$ be the right-continuous, non-decreasing function defined by $\vphi(z) = k$ if and only if $z\in[z_k,z_{k+1})$.
Recall the definitions of $\Gamma_{n,u}^{(\vphi)}$ and $L_{n,u}$ from \eqref{n_geo_eq} and \eqref{parameterized_time}.
For each $k\in\llbrack \lfloor sn\rfloor,\lfloor tn\rfloor\rrbrack$, let $r_k$ be the unique value in $[s,t]$ such that $L_{n,u}(r_k) = z_k$.

\begin{defn}
Denote by $\wt R_{n,u}(\vphi) \subset \R^2$ the planar path determined by the graph of $\Gamma_{n,u}^{(\vphi)}$; that is,
\eq{
\wt R_{n,u}(\vphi) \coloneqq \underbrace{\{(z,r) : r \in [s,t], z = \Gamma_{n,u}^{(\vphi)}(r)\}}_{\text{oblique segments}} \cup \underbrace{\bigcup_{k=\lfloor sn\rfloor+1}^{\lfloor tn\rfloor} \big\{(z,r_k) : z \in [\Gamma_{n,u}(r_k),\Gamma_{n,u}(r_k-)]\big\}}_{\text{horizontal segments}},
}
where $\Gamma_{n,u}(r_k-) \coloneqq \lim_{r\nearrow r_k} \Gamma_{n,u}(r)$ if $r_k > s$, and $\Gamma_{n,u}(s-) \coloneqq \Gamma_{n,u}(s)$.
See Figure~\ref{geodesic_path}.
\end{defn}

The theorem below says that the two planar paths $R_n(\vphi)$ and $\wt R_{n,u}(\vphi)$ are asymptotically equal if $\vphi$ is a maximizer in \eqref{blpp_def_2}.
In particular, $n$-polymers share the same limit as $n$-geodesics, but use the language of sets rather than of functions.
Recall that the \textit{Hausdorff distance} between two nonempty subsets $\XX,\YY$ of a metric space with metric $\tau$ is
\eq{
\dist_H(\XX,\YY) \coloneqq \max\Big\{\sup_{x\in\XX}\inf_{y\in\YY}\tau(x,y),\sup_{y\in\YY}\inf_{x\in\XX}\tau(x,y)\Big\}.
}
In stating the result, we return to the setting of Theorem~\ref{geodesics_converge}.
Recall that $\{\Gamma_{n,u}^{(\vphi_n)}\}_{n\geq1}$ was assumed to be a sequence of $n$-geodesics, known to converge uniformly to $\gamma_u$. 
Since $\wt R_{n,u}(\vphi_n)$ is simply the graph of $\Gamma_{n,u}^{(\vphi_n)}$, some definition chasing will show that $\wt R_{n,u}(\vphi_n)$ converges to the graph of $\gamma_u$.  
It is by this logic, carried out in Section~\ref{polymer_convergence}, that \eqref{polymer_geodesic} will follow from \eqref{coupled_polymers}.

\begin{thm} \label{polymers_converge}
In the coupling of Theorem~\ref{blpp_airy}, the following holds for any $u\in\R^4_\uparrow$. 
On the almost sure event $\PP \cap \{|G_u|=1\}$, if $\gamma_u$ is the unique element of $G_u$, and $R_n(\vphi_n) \in P_{n,u}$ for each $n$, then

\eeq{ \label{coupled_polymers}
\limsup_{n\to\infty} \frac{\dist_H(R_n(\vphi_n),\wt R_{n,u}(\vphi_n))}{n^{-1/3}} < \infty.
}
In particular, upon defining $\Gr(\gamma_u) \coloneqq \{(\gamma_u(r),r) : r\in[s,t]\}$, we have
\eeq{ \label{polymer_geodesic}
\lim_{n\to\infty} \dist_H(R_n(\vphi_n),\Gr(\gamma_u)) = 0.
}
\end{thm}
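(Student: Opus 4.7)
The plan is to prove \eqref{coupled_polymers} by a direct geometric comparison of the two piecewise linear paths, and then to derive \eqref{polymer_geodesic} from it in combination with Theorem~\ref{geodesics_converge}. The key observation for \eqref{coupled_polymers} is that $R_n(\vphi_n)$ and $\wt R_{n,u}(\vphi_n)$ encode the \emph{same} underlying staircase $\vphi_n$ under two different affine parameterizations of $\R\times[s,t]$: $R_n$ identifies unscaled time with the vertical staircase coordinate via $k\mapsto k/n$, whereas $\wt R_{n,u}$ identifies it with the horizontal staircase coordinate via $L_{n,u}^{-1}$. Consequently the staircase corner $(z_k, k)$ appears on $R_n(\vphi_n)$ at $\big((z_k-k)/(2n^{2/3}),\, k/n\big)$ and on $\wt R_{n,u}(\vphi_n)$ at $\big((z_k-k)/(2n^{2/3}),\, r_k\big)$, where $r_k$ is the unique solution of $L_{n,u}(r_k)=z_k$. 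These two corresponding corners share their horizontal coordinate exactly and differ in time only by $|r_k - k/n|$, so the whole argument reduces to estimating this vertical discrepancy uniformly in $k$.

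To that end, I invert \eqref{parameterized_time} using $L_{n,u}(k/n) = k + \tfrac{2n^{2/3}}{t-s}\big[(tx-sy) + (y-x)\tfrac{k}{n}\big]$ together with the identity $(z_k - k)/(2n^{2/3}) = \Gamma_{n,u}^{(\vphi_n)}(r_k)$ to obtain
\eq{
r_k - \frac{k}{n} \;=\; \frac{z_k - L_{n,u}(k/n)}{n + \tfrac{2n^{2/3}(y-x)}{t-s}} \;=\; \frac{2 n^{-1/3}}{1+O(n^{-1/3})}\bigg[\Gamma_{n,u}^{(\vphi_n)}(r_k) \;-\; \frac{(tx-sy)+(y-x)\tfrac{k}{n}}{t-s}\bigg].
}
Because Theorem~\ref{geodesics_converge} yields $\sup_n \|\Gamma_{n,u}^{(\vphi_n)}\|_\infty < \infty$ on the almost-sure event at hand, the bracketed term is uniformly bounded in $k$ and $n$, which gives $|r_k - k/n| = O(n^{-1/3})$. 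Between consecutive corners both paths traverse short segments of horizontal extent $O(n^{-2/3})$ and vertical extent $O(n^{-1/3})$, so the cornerwise proximity upgrades to a Hausdorff bound $\dist_H(R_n(\vphi_n), \wt R_{n,u}(\vphi_n)) = O(n^{-1/3})$, which is precisely \eqref{coupled_polymers}.

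For \eqref{polymer_geodesic} I apply the triangle inequality for Hausdorff distance to reduce matters to showing $\dist_H(\wt R_{n,u}(\vphi_n), \Gr(\gamma_u)) \to 0$. By construction, $\wt R_{n,u}(\vphi_n)$ is the graph of the right-continuous step-like function $\Gamma_{n,u}^{(\vphi_n)}$ completed by horizontal bridging segments of width $1/(2n^{2/3})$ at each jump time. The uniform convergence $\Gamma_{n,u}^{(\vphi_n)} \to \gamma_u$ with $\gamma_u$ continuous, supplied by Theorem~\ref{geodesics_converge}, translates directly into Hausdorff convergence of graphs, while the shrinking horizontal bridges contribute only an $O(n^{-2/3})$ additive error. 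I expect the principal obstacle to be purely bookkeeping: one must verify the $O(n^{-1/3})$ bound at the boundary corners $k \in \{\lfloor sn \rfloor, \lfloor tn \rfloor\}$ despite the floor discrepancies in \eqref{scaled_coordinates}, and confirm that the identity $(z_k - k)/(2n^{2/3}) = \Gamma_{n,u}^{(\vphi_n)}(r_k)$ remains valid at every corner index. Both checks go through on the almost-sure event of Theorem~\ref{geodesics_converge}.
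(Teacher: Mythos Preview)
Your approach to \eqref{coupled_polymers} is essentially the paper's: both match points on $R_n(\vphi_n)$ and $\wt R_{n,u}(\vphi_n)$ that share a common horizontal coordinate, and both bound the resulting time discrepancy by inverting $L_{n,u}$ and using the uniform boundedness of $\Gamma_{n,u}^{(\vphi_n)}$ from Theorem~\ref{geodesics_converge}. The paper does this for every unscaled $z'$ in the domain (the bijection \eqref{normal_form}--\eqref{tilde_form}), whereas you restrict to the corners $z' = z_k$; your explicit formula for $r_k - k/n$ reproduces the paper's bound \eqref{uniform_r_diff}.

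There is, however, an error in your final step. You claim that between consecutive corners both paths traverse segments of horizontal extent $O(n^{-2/3})$; this is false. The horizontal segment of $R_n(\vphi_n)$ at height $k/n$ and the matching oblique segment of $\wt R_{n,u}(\vphi_n)$ both have horizontal extent $(z_{k+1}-z_k)/(2n^{2/3})$, which need not be small --- the paper's Lemma~\ref{to_good_pts} only shows these widths eventually fall below any fixed $\eps$, with no rate. The fix is immediate and amounts to doing what the paper does: these two long segments span the \emph{same} horizontal interval, and over that interval the height on $\wt R_{n,u}$ lies in $[r_k,r_{k+1}]$ while $R_n$ sits at $k/n$, so matching all intermediate points (not just corners) by horizontal coordinate yields a vertical discrepancy $O(n^{-1/3})$ throughout. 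A related inaccuracy is that the horizontal bridges of $\wt R_{n,u}$ can have width a multiple of $1/(2n^{2/3})$ when $\vphi_n$ jumps by several units at once, though this does not affect your argument for \eqref{polymer_geodesic}. Your triangle-inequality derivation of \eqref{polymer_geodesic} is correct and more direct than the paper's; note however that the paper in fact establishes the sharper statement \eqref{eventual_disjointness_prep}, which is what the proof of Corollary~\ref{eventual_disjointness} actually uses.
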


From this result, we will obtain the following essential ingredient to the proof of Theorem~\ref{disjoint_geo_rarity}.
While the statement does not immediately follow from Theorem~\ref{polymers_converge}, it will be an easy consequence of the argument we give for \eqref{polymer_geodesic}.
A short proof is included in Section~\ref{polymer_convergence}.

\begin{cor} \label{eventual_disjointness}
Let $u_1 = (x_1,s;y_1,t)$ and $u_2 = (x_2,s;y_2,t)$ with $x_1<x_2$ and $y_1<y_2$.
For each $n$, choose any $R_n(\vphi_n) \in P_{n,u_1}$ and $R_n(\phi_n) \in P_{n,u_2}$.
On the event $\PP\cap\{|G_{u_1}|=1\} \cap \{|G_{u_2}|=1\}$, if the unique geodesics $\gamma_1\in G_{u_1}$ and $\gamma_2\in G_{u_2}$ are disjoint, then $R_n(\vphi_n) \cap R_n(\phi_n) = \varnothing$ for all $n$ sufficiently large.
\end{cor}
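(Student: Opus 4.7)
The plan is to deduce Corollary~\ref{eventual_disjointness} as a direct consequence of the Hausdorff convergence \eqref{polymer_geodesic} in Theorem~\ref{polymers_converge}. Since the event $\PP$ is defined without reference to a particular $u$, we may apply that theorem simultaneously to $u_1$ and $u_2$ on the event $\PP \cap \{|G_{u_1}|=1\} \cap \{|G_{u_2}|=1\}$. This gives sequences of $n$-polymers satisfying
\eq{
\dist_H(R_n(\vphi_n), \Gr(\gamma_1)) \to 0 \quad \text{and} \quad \dist_H(R_n(\phi_n), \Gr(\gamma_2)) \to 0 \quad \text{as } n\to\infty.
}

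Next, I would observe that the assumed disjointness of $\gamma_1$ and $\gamma_2$ as functions on $[s,t]$ translates to disjointness of their graphs in $\R^2$: indeed, if $(z,r) \in \Gr(\gamma_1) \cap \Gr(\gamma_2)$, then $\gamma_1(r) = z = \gamma_2(r)$, contradicting disjointness. Both $\Gr(\gamma_1)$ and $\Gr(\gamma_2)$ are compact, being continuous images of $[s,t]$, so their disjointness implies strictly positive separation
\eq{
d \coloneqq \dist\bigl(\Gr(\gamma_1), \Gr(\gamma_2)\bigr) > 0.
}

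To finish, choose $n_0$ large enough that for all $n \geq n_0$, both Hausdorff distances above are strictly less than $d/3$. Then every point of $R_n(\vphi_n)$ lies within Euclidean distance $d/3$ of $\Gr(\gamma_1)$, and every point of $R_n(\phi_n)$ lies within distance $d/3$ of $\Gr(\gamma_2)$. If we had $p \in R_n(\vphi_n) \cap R_n(\phi_n)$, the triangle inequality would force
\eq{
d \leq \dist(p,\Gr(\gamma_1)) + \dist(p,\Gr(\gamma_2)) < \tfrac{d}{3} + \tfrac{d}{3} = \tfrac{2d}{3},
}
a contradiction. Hence $R_n(\vphi_n) \cap R_n(\phi_n) = \varnothing$ for all $n \geq n_0$.

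There is no real obstacle here beyond invoking Theorem~\ref{polymers_converge} twice; the only point worth care is that $\PP$ is a single, $u$-independent event so that both applications of the theorem hold on the same sample path. The parenthetical remark in the excerpt—that this does not \emph{immediately} follow from Theorem~\ref{polymers_converge}—appears to reference the fact that one must first pass from convergence of functions (which is what \eqref{coupled_polymers} really encodes for $\wt R_{n,u}$) to convergence of planar graphs, but this reduction is exactly what \eqref{polymer_geodesic} delivers and is re-used here without modification.
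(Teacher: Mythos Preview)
Your proof is correct. The approach is essentially the same as the paper's, with one cosmetic difference worth noting: the paper invokes the intermediate statement \eqref{eventual_disjointness_prep} (uniform horizontal closeness $\sup_{(z,r)\in R_n(\vphi_n)}|z-\gamma_u(r)|\to 0$) from inside the proof of Theorem~\ref{polymers_converge}, and then bounds $\inf_r|\gamma_1(r)-\gamma_2(r)|$ from below to force separation at each fixed time. You instead use the final conclusion \eqref{polymer_geodesic} directly, together with the standard fact that two compact disjoint sets have positive Euclidean distance and that Hausdorff-close sets inherit this separation. Both arguments are short and yield the same conclusion; yours has the mild advantage of citing only the theorem statement rather than an intermediate display from its proof.
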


\subsubsection{Estimates for disjoint collections of geodesics} \label{inputs_1}

We will say that two paths $\gamma_1,\gamma_2 : [s,t]\to\R$ are \textit{disjoint} if $\gamma_1(r) \neq \gamma_2(r)$ for all $r\in[s,t]$.
While the geodesics on which we will ultimately focus, namely those defining $\DD_{x_1,x_2}$ and $\DD$, are not strictly speaking disjoint---they coincide at the endpoint(s)---our arguments will exploit their influence on the disjointness of other nearby geodesics.
Therefore, our second series of inputs concerns the rarity of certain events involving disjointness.
Theorem~\ref{disjoint_geo_rarity} and Corollary~\ref{disjoint_2geo_rarity}, in particular, are very much in the aim of translating known results about the prelimiting model into ones about the limiting model.

For subsets $A,B,C\subset\R$ and times $s<t$, let $\MDG_{s,t}^C(A,B)$ denote the maximum size of a collection of disjoint geodesics whose endpoints lie in $(A\cap C)\times\{s\}$ and $(B\cap C)\times\{t\}$.
(Here we mean disjoint even at the endpoints.)
When $C$ is countable (by which we mean having a cardinality that is either finite or countably infinite), the measurability of this random variable is proved in Proposition~\ref{disjoint_measurable}.
The following tail bound is analogous to, and indeed proved from, \cite[Thm.~1.1]{hammond20}.
The proof appears in Section~\ref{rarity_section}.
 


\begin{thm}\label{disjoint_geo_rarity}
There exists a positive constant $G$ such that the following holds for all countable $C\subset\R$.
For any $\eps>0$, integer $k\geq2$, and $u = (x,s;y,t)\in\R^4_\uparrow$ satisfying
\eeq{ \label{disjoint_geo_rarity_conditions}
\frac{\eps}{(t-s)^{2/3}} \leq G^{-4k^2}, \quad  \frac{|x-y|}{(t-s)^{2/3}} \leq \Big(\frac{\eps}{(t-s)^{2/3}}\Big)^{-1/2}\Big(\log \frac{(t-s)^{2/3}}{\eps}\Big)^{-2/3}G^{-k},
}
we have
\eeq{ \label{disjoint_geo_rarity_ineq}
&\P\Big(\MDG_{s,t}^C([x-\eps,x+\eps],[y-\eps,y+\eps])\geq k\Big) \\
&\leq G^{k^3}\exp\Big\{G^k\Big(\log\frac{(t-s)^{2/3}}{\eps}\Big)^{5/6}\Big\}\Big(\frac{\eps}{(t-s)^{2/3}}\Big)^{(k^2-1)/2}.
}
\end{thm}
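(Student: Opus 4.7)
The plan is to transfer the tail bound from the directed landscape down to the prelimiting Brownian LPP via the coupling of Theorem~\ref{blpp_airy}, and then to invoke the prelimiting estimate \cite[Thm.~1.1]{hammond20} on disjoint $n$-polymers. The bridge between these two settings is Corollary~\ref{eventual_disjointness}, which transfers disjointness of geodesics in $\LL$ to eventual disjointness of the corresponding $n$-polymers.

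Fix a countable $C\subset\R$ and consider the countable collection of pairs
\eq{
\JJ \coloneqq \bigl\{(p,q) \in C\times C : p\in[x-\eps,x+\eps],\ q\in[y-\eps,y+\eps]\bigr\}.
}
For each $(p,q)\in\JJ$ we have $|G_{(p,s;q,t)}|=1$ almost surely, so by countability there is an almost sure event $\Omega_0 \subset \PP$ on which $|G_{(p,s;q,t)}|=1$ for every $(p,q)\in\JJ$. On $\{\MDG_{s,t}^C([x-\eps,x+\eps],[y-\eps,y+\eps])\geq k\}\cap\Omega_0$, by definition of $\MDG_{s,t}^C$ we may select $k$ pairwise disjoint geodesics $\gamma_1,\dots,\gamma_k$ with endpoints $(p_i,s),(q_i,t)$, $(p_i,q_i)\in\JJ$. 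Since the $\gamma_i$ are pairwise disjoint continuous paths, planarity forces (after relabeling) $p_1<\cdots<p_k$ and $q_1<\cdots<q_k$. For each $i$, select an $n$-polymer $R_n(\vphi_n^{(i)})\in P_{n,u_i}$ with $u_i\coloneqq(p_i,s;q_i,t)$. Applying Corollary~\ref{eventual_disjointness} to each ordered pair $(\gamma_i,\gamma_j)$, $1\leq i<j\leq k$, yields a random finite $N$ such that $R_n(\vphi_n^{(1)}),\dots,R_n(\vphi_n^{(k)})$ are pairwise disjoint for all $n\geq N$.

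Let $E_n$ be the event that there exist $k$ pairwise disjoint $n$-polymers whose scaled endpoints lie in $[x-\eps,x+\eps]\times\{s\}$ and $[y-\eps,y+\eps]\times\{t\}$. The argument above gives the inclusion $\{\MDG_{s,t}^C([x-\eps,x+\eps],[y-\eps,y+\eps])\geq k\}\cap\Omega_0 \subset \liminf_{n\to\infty} E_n$, so Fatou's lemma yields
\eq{
\P\bigl(\MDG_{s,t}^C([x-\eps,x+\eps],[y-\eps,y+\eps])\geq k\bigr) \leq \liminf_{n\to\infty}\P(E_n).
}
The prelimiting bound \cite[Thm.~1.1]{hammond20}, applied under the hypothesis \eqref{disjoint_geo_rarity_conditions}, provides precisely the right-hand side of \eqref{disjoint_geo_rarity_ineq} as an upper bound on $\P(E_n)$ for all sufficiently large $n$, and passing to the liminf completes the proof.

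The main obstacle is aligning \cite[Thm.~1.1]{hammond20} to the event $E_n$ with matching constants $G$ and exponents. Hammond's estimate is stated in the intrinsic scaled parameters of Brownian LPP and for endpoints varying freely in small intervals, so one must verify that these parameters coincide with those defined in \eqref{scaled_coordinates} and that the polynomial and exponential dependencies on $k$, $\eps/(t-s)^{2/3}$, and $|x-y|/(t-s)^{2/3}$ carry over verbatim under \eqref{disjoint_geo_rarity_conditions}. A secondary concern is measurability of $\MDG_{s,t}^C$, which is handled by Proposition~\ref{disjoint_measurable} for countable $C$; this is precisely why countability of $C$ is imposed.
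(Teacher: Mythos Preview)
Your proposal is correct and follows essentially the same route as the paper: restrict to the almost sure event on which all geodesics with endpoints in $C$ are unique, use Corollary~\ref{eventual_disjointness} to pass from disjoint geodesics to eventually disjoint $n$-polymers, deduce containment in $\liminf_n E_n$, and then apply \cite[Thm.~1.1]{hammond20}. The one step you flag as an ``obstacle'' but do not execute is exactly what the paper does explicitly: it uses the KPZ scaling \eqref{KPZ_scaling} to rewrite $\P(\MDG_{s,t}^C([x-\eps,x+\eps],[y-\eps,y+\eps])\geq k)$ as $\P(\MDG_{0,1}^{(t-s)^{-2/3}C}([z-\delta,z+\delta],[w-\delta,w+\delta])\geq k)$ with $\delta=\eps/(t-s)^{2/3}$, $z=x/(t-s)^{2/3}$, $w=y/(t-s)^{2/3}$, so that Theorem~\ref{disjoint_poly_rarity} applies verbatim on the unit time horizon and the conditions \eqref{disjoint_geo_rarity_conditions} become \eqref{disjoint_poly_rarity_conditions} for all large $n$. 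Once you write out this rescaling, your argument and the paper's coincide.
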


The restriction that $C$ be countable arises from the possibility that one of the mutually disjoint geodesics is associated to an exceptional $u\in\R^4_\uparrow$ for which $|G_u|\geq2$.
In this scenario, Corollary~\ref{eventual_disjointness} no longer guarantees that the concerned collection of geodesics can be realized from a disjoint collection of polymers in the prelimit, thereby rendering the estimate from \cite{hammond20} inapplicable.
When $C$ is countable, however, this hurdle can be avoided by simply assuming the almost sure event in which all geodesics whose spatial endpoints lie in $C$ are unique.

Notwithstanding these technical impediments, we anticipate that Theorem~\ref{disjoint_geo_rarity} is true with $C=\R$.
Indeed, the $k=2$ case admits a simple argument that will allow us to bootstrap to the following statement, proved at the end of Section \ref{rarity_section}.

\begin{cor} \label{disjoint_2geo_rarity}
Let $G$ be the constant from Theorem~\ref{disjoint_geo_rarity}.
For any $\eps>0$ and $u = (x,s;y,t)\in\R^4_\uparrow$ satisfying
\eeq{ \label{disjoint_2geo_rarity_conditions}
\frac{\eps}{(t-s)^{2/3}} \leq G^{-16}, \quad  \frac{|x-y|}{(t-s)^{2/3}} \leq \Big(\frac{\eps}{(t-s)^{2/3}}\Big)^{-1/2}\Big(\log \frac{(t-s)^{2/3}}{\eps}\Big)^{-2/3}G^{-2},
}
we have
\eeq{ \label{disjoint_2geo_rarity_ineq}
&\P\Big(\MDG_{s,t}^\R((x-\eps,x+\eps),(y-\eps,y+\eps))\geq 2\Big) \\
&\leq G^{8}\exp\Big\{G^2\Big(\log\frac{(t-s)^{2/3}}{\eps}\Big)^{5/6}\Big\}\Big(\frac{\eps}{(t-s)^{2/3}}\Big)^{3/2}.
}
\end{cor}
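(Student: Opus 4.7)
The plan is to reduce Corollary~\ref{disjoint_2geo_rarity} to the $k=2$ case of Theorem~\ref{disjoint_geo_rarity} by a deterministic approximation argument that replaces the pair (open intervals, $C=\R$) with (closed intervals, $C=\Q$). Observe that the conditions \eqref{disjoint_2geo_rarity_conditions} are precisely \eqref{disjoint_geo_rarity_conditions} at $k=2$, and the right-hand sides of \eqref{disjoint_2geo_rarity_ineq} and \eqref{disjoint_geo_rarity_ineq} coincide at $k=2$. Hence it suffices to show that, up to a $\P$-null set,
\[
\mathcal{E} := \{\MDG^\R_{s,t}((x-\eps,x+\eps),(y-\eps,y+\eps)) \geq 2\} \subset \mathcal{E}' := \{\MDG^\Q_{s,t}([x-\eps,x+\eps],[y-\eps,y+\eps]) \geq 2\},
\]
after which Theorem~\ref{disjoint_geo_rarity} (applied with $C=\Q$ and $k=2$) bounds $\P(\mathcal{E}')$ by the right-hand side of \eqref{disjoint_2geo_rarity_ineq}.

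Suppose $\omega \in \mathcal{E}$, so that we have two disjoint geodesics $\gamma_1, \gamma_2$ with endpoints $(x_i,s)$ and $(y_i,t)$, $i=1,2$, with $x_i \in (x-\eps, x+\eps)$ and $y_i \in (y-\eps, y+\eps)$. Continuity of the $\gamma_i$ on the compact interval $[s,t]$ together with planarity forces a strict ordering, say $\gamma_1(r) < \gamma_2(r)$ for all $r \in [s,t]$; in particular $x_1 < x_2$, $y_1 < y_2$, and $\delta := \inf_{r \in [s,t]} (\gamma_2(r) - \gamma_1(r)) > 0$ by compactness. Replacing $\gamma_1$ by the leftmost geodesic from $(x_1,s)$ to $(y_1,t)$ and $\gamma_2$ by the rightmost geodesic from $(x_2,s)$ to $(y_2,t)$ preserves the strict ordering by planar monotonicity (the new $\gamma_1$ is no larger than the old, and the new $\gamma_2$ no smaller).

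Now apply Lemma~\ref{limiting_geodesics} together with its right-sided analogue. There exist rationals $x_1^{(j)} \nearrow x_1$ and $y_1^{(j)} \nearrow y_1$ with $x_1^{(j)} > x-\eps$ and $y_1^{(j)} > y-\eps$, such that the leftmost geodesics $\gamma_1^{(j)}$ with endpoints $(x_1^{(j)},s)$ and $(y_1^{(j)},t)$ converge uniformly to $\gamma_1$; and similarly rationals $x_2^{(j)} \searrow x_2$, $y_2^{(j)} \searrow y_2$ strictly below $x+\eps$, $y+\eps$, with rightmost geodesics $\gamma_2^{(j)} \to \gamma_2$ uniformly. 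Choosing $j$ large enough that both uniform errors are less than $\delta/3$ gives $\gamma_1^{(j)}(r) < \gamma_2^{(j)}(r)$ for all $r \in [s,t]$, exhibiting two disjoint geodesics with rational endpoints in $[x-\eps,x+\eps]$ and $[y-\eps,y+\eps]$, so $\omega \in \mathcal{E}'$. The reduction holds on the almost sure event where Lemma~\ref{limiting_geodesics} applies.

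The main obstacle lies in the preceding paragraph: the validity of the one-sided uniform approximations, which rests entirely on Lemma~\ref{limiting_geodesics}. Because that lemma gives convergence only for monotone approach from one side, we are forced to handle the two geodesics separately (leftmost approached from the left, rightmost from the right), and to first replace the initial pair by the corresponding extremal geodesics before approximating. A minor technical point is the measurability of $\mathcal{E}$, handled by Remark~\ref{complete_remark}: $\mathcal{E}$ sits inside the measurable event $\mathcal{E}' \cup A^c$ for the almost sure event $A$ above, so $\P^\ast(\mathcal{E}) \leq \P(\mathcal{E}')$ and completeness of $\FF$ promotes $\mathcal{E}$ to an element of $\FF$.
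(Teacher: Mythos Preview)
Your approach is correct, but it takes a somewhat heavier route than the paper's. Both arguments reduce the event $\mathcal{E}$ with $C=\R$ and open intervals to the corresponding event with $C=\Q$, so that Theorem~\ref{disjoint_geo_rarity} applies with $k=2$. The difference lies in how the rational-endpoint disjoint pair is produced.

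The paper's proof avoids any limiting argument: working on the almost sure ordering event $\OO$ (together with $\EE$), it simply picks rationals $p_1\in(x-\eps,x_1)$, $p_2\in(x_2,x+\eps)$, $q_1\in(y-\eps,y_1)$, $q_2\in(y_2,y+\eps)$, takes \emph{any} $\wt\gamma_i\in G_{(p_i,s;q_i,t)}$, and invokes geodesic ordering to obtain $\wt\gamma_1\leq\gamma_1<\gamma_2\leq\wt\gamma_2$. This is a one-line sandwich; no convergence, no preliminary replacement by leftmost/rightmost geodesics, and no positive $\delta$ are needed. Your route instead passes through Lemma~\ref{limiting_geodesics}, which requires the event $\PP$, the extremal-geodesic replacement step, and a uniform-convergence argument to transfer strict separation. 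This works, but it is strictly more machinery for the same conclusion; conversely, your argument makes clear that the result would still go through in settings where one has compactness/convergence of geodesics but not a clean global ordering statement like $\OO$.

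Two minor points. First, Lemma~\ref{limiting_geodesics} guarantees \emph{some} sequence $\gamma_j\in G_{x_j,y_j}$ converging to the extremal geodesic, not that the leftmost (resp.\ rightmost) geodesics in $G_{x_j,y_j}$ converge; your argument only needs the former, so just drop the word ``leftmost'' when describing $\gamma_1^{(j)}$. Second, your measurability remark is slightly loose: since your rational endpoints actually land in the \emph{open} intervals, you in fact obtain equality (on the almost sure event) between $\mathcal{E}$ and $\{\MDG^\Q_{s,t}((x-\eps,x+\eps),(y-\eps,y+\eps))\geq2\}$, which is measurable by Proposition~\ref{disjoint_measurable}; completeness of $\FF$ then gives $\mathcal{E}\in\FF$ directly.
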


Our arguments for the upper bounds in Theorems~\hyperref[main_thm_b]{\ref*{main_thm}\ref*{main_thm_b}} and \hyperref[main_thm_2_b]{\ref*{main_thm_2}\ref*{main_thm_2_b}} will use Corollary~\ref{disjoint_2geo_rarity} directly.
Meanwhile, Theorems~\hyperref[main_thm_a]{\ref*{main_thm}\ref*{main_thm_a}} and \hyperref[main_thm_2_a]{\ref*{main_thm_2}\ref*{main_thm_2_a}} will require the following application of Corollary~\ref{disjoint_2geo_rarity} regarding geodesics that not only start and end nearby one another, but also remain close at all intermediate times.
This result is established in Section \ref{no_arbitrary_closeness_proof}.


\begin{thm} \label{no_arbitrary_closeness}
On the event $\PP$, for any compact $K\subset\R^4_\uparrow$, there is a random $\eps>0$ such that the following is true.
If $u_1 = (x,s;y,t),u_2 = (z,s;w,t)\in K$ admit geodesics $\gamma_1\in G_{u_1}$, $\gamma_2\in G_{u_2}$ satisfying $|\gamma_1(r)-\gamma_2(r)|<\eps$ for all $r\in[s,t]$, then $\gamma_1$ and $\gamma_2$ are not disjoint.
\end{thm}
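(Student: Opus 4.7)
The approach is a Borel--Cantelli argument at dyadic scales, using Corollary~\ref{disjoint_2geo_rarity} combined with the time-independence structure of the directed landscape from \eqref{KPZ_scaling}. Critically, the hypothesis that $|\gamma_1(r)-\gamma_2(r)|<\eps$ at every intermediate time $r\in[s,t]$---not just at the endpoints---provides extra structure that we exploit by subdividing the time interval into many pieces and independently controlling each piece.

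For a fixed compact $K$ and dyadic scale $\eps_j = 2^{-j}$, let $B_j$ denote the event that some pair $u_1 = (x,s;y,t)$, $u_2=(z,s;w,t) \in K$ admits disjoint geodesics $\gamma_1,\gamma_2$ with $\sup_{r\in[s,t]}|\gamma_1(r)-\gamma_2(r)|<\eps_j$. Reducing first to the case of fixed $[s,t]$ and $K$ with spatial projection inside $[-M,M]^2$, I would fix a center $u^*=(x^*,s;y^*,t)$ and show $\P(E_{\eps}(u^*))\lesssim \eps^{N/2+1}$ for a suitably large integer $N$, where $E_{\eps}(u^*)$ localizes the above event to an $\eps$-ball around $u^*$. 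To achieve this, partition $[s,t]$ into $N$ subintervals $[r_k,r_{k+1}]$ of length $\delta=(t-s)/N$. The $\eps$-closeness at each intermediate time $r_k$ forces both $\gamma_1(r_k)$ and $\gamma_2(r_k)$ into a common $\eps$-interval centered at some $m_k$, and by the composition rule \eqref{max_prop}, the restrictions $\gamma_i|_{[r_k,r_{k+1}]}$ remain disjoint geodesics. Thus the localized event is contained in $\bigcup_{\vc m}\bigcap_{k=0}^{N-1} A_k(m_k,m_{k+1})$, where $\vc m = (m_1,\ldots,m_{N-1})$ ranges over an $\eps$-net in a bounded window (at most $O(\eps^{-(N-1)})$ choices) and
\eq{
A_k(m_k,m_{k+1}) \coloneqq \big\{\MDG_{r_k,r_{k+1}}^\R((m_k-2\eps,m_k+2\eps),(m_{k+1}-2\eps,m_{k+1}+2\eps))\geq 2\big\}.
}
Since each $A_k$ is measurable with respect to $\LL$ restricted to the time strip $[r_k,r_{k+1}]$, and these strips are essentially disjoint, the events $A_k$ are independent by the strip-independence of $\LL$ that strengthens \eqref{KPZ_scaling}. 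Corollary~\ref{disjoint_2geo_rarity} bounds $\P(A_k)\lesssim \eps^{3/2}$ (with subpolynomial polylog corrections coming from the exponential factor), whence
\eq{
\P(E_\eps(u^*)) \;\lesssim\; \eps^{-(N-1)}\cdot \eps^{3N/2} \;=\; \eps^{N/2+1}.
}
A further $O(\eps^{-2})$ union bound over an $\eps$-net of $K$'s spatial projection yields $\P(B_j) \lesssim \eps_j^{N/2-1}$, which is summable in $j$ once $N\geq 4$; Borel--Cantelli then produces the required random $\eps>0$.

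The principal technical obstacle will be accommodating variable $(s,t)\in K$: different centers $u^*$ produce different subinterval grids, so the above argument must be combined with an additional $\eps$-net in the $(s,t)$-plane (contributing an extra $O(\eps^{-2})$ factor that is absorbed by increasing $N$), together with continuity of $\LL$ and of the geodesic problem under small time perturbations. Verifying the hypotheses~\eqref{disjoint_2geo_rarity_conditions}---particularly the bound on $|m_k-m_{k+1}|/\delta^{2/3}$---requires a priori control on the spatial spread of geodesics with endpoints in $K$, which I would secure by restricting to a high-probability event confining these geodesics to a bounded spatial window. Finally, the measurability of $B_j$ should follow from countable dense approximations combined with the completeness of $\FF$ (Remark~\ref{complete_remark}).
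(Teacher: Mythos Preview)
Your approach is essentially the paper's: subdivide the time interval into $N$ pieces, invoke independence across the resulting strips, apply Corollary~\ref{disjoint_2geo_rarity} on each, and union-bound over a spatial net at the intermediate times. The paper streamlines your anticipated ``principal technical obstacle'' by noting that compactness of $K$ gives a uniform lower bound $t-s\geq 6\delta$, so for \emph{any} $(s,t)$ arising from $K$ one can find $r\in 3\delta\Z$ with $[r,r+3\delta]\subset[s,t]$ and run the argument on this fixed three-piece grid---the union over $r$ then contributes only $O(m/\delta)$ terms, independent of $\eps$, which removes the need for an $\eps$-net in $(s,t)$ and lets $N=3$ suffice.
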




\subsection{Organization}
For the reader's convenience, we list below the contents of the remaining sections.
All arguments are presented in logical order.
\begin{itemize}
\item Section~\ref{preliminaries} establishes basic properties of the directed landscape $\LL$ and its geodesics which will be needed in all later proofs.
These address existence, uniqueness, and ordering of geodesics, as well as restriction and concatenation operations.
Key regularities of $\LL$ are recalled in Definition~\ref{proper_def}.
\item Section~\ref{disjoint_geo_rarity_proofs} begins by verifying in Theorem~\ref{polymers_converge} the asymptotic equivalence of $n$-geodesics (the objects considered in the fundamental Theorem~\ref{geodesics_converge}) to what we call $n$-polymers (for which \cite{hammond20} contains a prelimiting version of Theorem~\ref{disjoint_geo_rarity}).
These results are in service of Theorem~\ref{no_arbitrary_closeness}, which says that two geodesics in a compact set must either intersect or be well separated in the uniform norm.
\item By capitalizing on this last observation, Section~\ref{lower_bound} proves Theorems~\hyperref[main_thm_a]{\ref*{main_thm}\ref*{main_thm_a}} and \hyperref[main_thm_2_a]{\ref*{main_thm_2}\ref*{main_thm_2_a}}, i.e.~the supports of the random measures $\mu_{x_1,x_2}$ and $\mu$ from \eqref{mu_var_def} are exactly the exceptional sets $\DD_{x_1,x_2}$ and $\DD$ from Theorems~\ref{thm_1} and \ref{thm_2}.
\item Using this description of the exceptional sets, we finally prove Theorems~\ref{thm_1} and \ref{thm_2} in Section~\ref{upper_bound}, as sketched in Section~\ref{sketch_section}.
\end{itemize}

\subsection{Acknowledgments}
B\'alint Vir\'ag gave a seminar at the R\'enyi Institute in January 2019 after which he showed simulations of the measure $\mu$ from  \eqref{mu_2var_def}  and
indicated that the Hausdorff dimension of its support equals one-half.
The third author attended this talk and would like to thank B\'alint Vir\'ag for beneficial discussions in person and by email regarding the fractal geometry of various exceptional sets embedded in the directed landscape
and relations between the measure $\mu$ and the Airy sheet. 
In these discussions, B\'alint indicated an argument, due to him and Duncan Dauvergne, which may be used to prove that uniform convergence of geodesics in the directed landscape entails that all but finitely many intersect the limiting path.
This fact is captured here by Theorem~\ref{no_arbitrary_closeness}.
The authors also thank Riddhipratim Basu, Timo Sepp\"al\"ainen, Evan Sorensen, and Benedek Valk\'o for helpful discussions, and the referees for several useful suggestions.

\section{Preliminary facts concerning geodesics} \label{preliminaries}

In this section, we establish some basic facts about paths and geodesics in the directed landscape.

\subsection{New geodesics from old}
We begin with a lemma concerning subpaths and subgeodesics.

\begin{lemma} \label{subgeodesic_lemma}
Let $\gamma : [s,t] \to \R$ be a continuous path, and suppose we are given a partition $s=t_0<t_1<\cdots<t_k=t$.
The following statements hold.
\begin{enumerate}[label=\textup{(\alph*)}]
\item \label{subgeodesic_lemma_a}
We have the concatenation identity
\eeq{ \label{concatenation}
\LL(\gamma) = \sum_{i=1}^k \LL(\gamma\big|_{[t_{i-1},t_i]}).
}
\item \label{subgeodesic_lemma_b}
If $\gamma$ is a geodesic, then $\gamma\big|_{[t_{i-1},t_i]}$ is a geodesic for each $i=1,\dots,k$, and
\eeq{ \label{any_subdivision}
\LL(\gamma) = \sum_{i=1}^k \LL(\gamma(t_{i-1}),t_{i-1};\gamma(t_i),t_i).
}
\end{enumerate}
\end{lemma}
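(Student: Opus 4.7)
For part \ref{subgeodesic_lemma_a}, the plan is to show that refining any partition in the infimum \eqref{length_def} so that it contains the fixed points $\{t_0,t_1,\dots,t_k\}$ only decreases the sum, and then that such refinements factor through the subintervals. The key ingredient is the reverse triangle inequality: by taking $z=\gamma(r)$ in the supremum \eqref{max_prop} one obtains
\eq{
\LL(\gamma(s'),s';\gamma(t'),t') \geq \LL(\gamma(s'),s';\gamma(r),r) + \LL(\gamma(r),r;\gamma(t'),t')
}
whenever $s' < r < t'$. Applied repeatedly, this shows that for any partition $P$ of $[s,t]$ the sum along $P$ is bounded below by the sum along its refinement $P\cup\{t_0,\dots,t_k\}$. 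Hence the infimum in \eqref{length_def} is unchanged if one restricts to partitions containing $\{t_0,\dots,t_k\}$. Every such partition is canonically the concatenation of partitions $P_i$ of $[t_{i-1},t_i]$, so the infimum splits as a sum over $i$, yielding
\eq{
\LL(\gamma) = \sum_{i=1}^k \inf_{P_i} \sum_{P_i} = \sum_{i=1}^k \LL(\gamma\big|_{[t_{i-1},t_i]}).
}

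For part \ref{subgeodesic_lemma_b}, I argue by contradiction. Suppose some restriction $\gamma\big|_{[t_{i-1},t_i]}$ fails to be a geodesic from $(\gamma(t_{i-1}),t_{i-1})$ to $(\gamma(t_i),t_i)$. Then there is a continuous $\gamma^\ast:[t_{i-1},t_i]\to\R$ with the same endpoints and $\LL(\gamma^\ast) > \LL(\gamma\big|_{[t_{i-1},t_i]})$. Splice it in: define $\gamma'$ to equal $\gamma$ on $[s,t_{i-1}]\cup[t_i,t]$ and $\gamma^\ast$ on $[t_{i-1},t_i]$. This $\gamma'$ is continuous because the endpoints match, and it still runs from $(x,s)$ to $(y,t)$. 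Applying the concatenation identity \eqref{concatenation} from part \ref{subgeodesic_lemma_a} gives $\LL(\gamma') > \LL(\gamma) = \LL(x,s;y,t)$. But by taking $k=1$ in \eqref{length_def} we have $\LL(\gamma') \leq \LL(x,s;y,t)$, a contradiction.

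Once each $\gamma\big|_{[t_{i-1},t_i]}$ is known to be a geodesic, we may replace $\LL(\gamma\big|_{[t_{i-1},t_i]})$ by $\LL(\gamma(t_{i-1}),t_{i-1};\gamma(t_i),t_i)$ in \eqref{concatenation}, which yields \eqref{any_subdivision}. There is no real obstacle here; the only things to be careful about are the continuity of the spliced path (automatic) and that the definition \eqref{length_def} permits partitions of arbitrary cardinality so that refinements are genuinely admissible.
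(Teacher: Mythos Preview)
Your argument for part \ref{subgeodesic_lemma_a} is correct and is essentially the paper's proof: both use the reverse triangle inequality from \eqref{max_prop} to show that refining a partition only decreases the sum, and then split the infimum over the subintervals. The paper reduces to $k=2$ by induction whereas you treat general $k$ directly, but the content is the same.

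For part \ref{subgeodesic_lemma_b} the paper takes a different and slightly more self-contained route. Instead of splicing, it chains
\eq{
\LL(\gamma\big|_{[s,r]}) + \LL(\gamma\big|_{[r,t]}) \stackref{concatenation}{=} \LL(\gamma) = \LL(x,s;y,t) \stackref{max_prop}{\geq} \LL(x,s;\gamma(r),r) + \LL(\gamma(r),r;y,t)
}
against the trivial inequalities $\LL(\gamma\big|_{[s,r]}) \leq \LL(x,s;\gamma(r),r)$ and $\LL(\gamma\big|_{[r,t]}) \leq \LL(\gamma(r),r;y,t)$, which forces both to be equalities. Your splicing-by-contradiction argument works too, but note that the step ``not a geodesic, hence there exists $\gamma^\ast$ with $\LL(\gamma^\ast) > \LL(\gamma\big|_{[t_{i-1},t_i]})$'' is not immediate from the definition: failing to be a geodesic only says $\LL(\gamma\big|_{[t_{i-1},t_i]}) < \LL(\gamma(t_{i-1}),t_{i-1};\gamma(t_i),t_i)$, and to extract a strictly better path you need $\LL(u) = \sup_{\gamma^\ast} \LL(\gamma^\ast)$. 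The paper does assert this identity just before Definition~\ref{landscape_geodesic_def} (it follows from geodesic existence on the event $\PP$), so your proof is fine in context; the paper's direct inequality chain simply avoids invoking it and uses only \eqref{max_prop}.
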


\begin{proof}
First we prove (a).
By induction, it suffices to prove the claim in the case $k=2$ with $s < r < t$.
Since any pairing of a partition of $[s,r]$ 
with a partition of $[r,t]$ 
induces a partition of $[s,t]$, 
it is clear that $\LL(\gamma) \leq \LL(\gamma\big|_{[s,r]}) + \LL(\gamma\big|_{[r,t]})$.
On the other hand, for any partition of $[s,t]$ not arising in this way (i.e.,~a sequence $s=t_0 < t_1 < \cdots < t_k=t$ such that $t_{j-1} < r < t_{j}$ for some $j$), we have
\eq{
\sum_{i=1}^k \LL(\gamma(t_{i-1}),t_{i-1};\gamma(t_i),t_i)
&\stackrel{\hspace{0.5ex}\mbox{\footnotesize\eqref{max_prop}}\hspace{0.5ex}}{\geq} 
\sum_{i=1}^{j-1}\LL(\gamma(t_{i-1}),t_{i-1};\gamma(t_i),t_i)\, +\, \LL(\gamma(t_{j-1}),t_{j-1};\gamma(r),r) \\
&\phantom{\stackref{length_def}{\geq}}+\LL(\gamma(r),r;\gamma(t_{j}),t_{j})+\sum_{i=j+1}^{k}\LL(\gamma(t_{i-1}),t_{i-1};\gamma(t_i),t_i)  \\
&\stackref{length_def}{\geq} \LL(\gamma\big|_{[s,r]}) + \LL(\gamma\big|_{[r,t]}).
}
Hence $\LL(\gamma) \geq \LL(\gamma\big|_{[s,r]}) + \LL(\gamma\big|_{[r,t]})$, which completes the proof of (a).

For (b), we can again appeal to induction and reduce to the case $k=2$.
If $\gamma$ is a geodesic, then
\eq{
\LL(\gamma\big|_{[s,r]}) + \LL(\gamma\big|_{[r,t]}) 
\stackref{concatenation}{=} \LL(\gamma) 
= \LL(\gamma(s),s;\gamma(t),t)
&\stackref{max_prop}{\geq} \LL(\gamma(s),s;\gamma(r),r) + \LL(\gamma(r),r;\gamma(t),t).
}
Since we always have
\eq{
\LL(\gamma\big|_{[s,r]}) \leq \LL(\gamma(s),s;\gamma(r),r) \quad \text{and} \quad \LL(\gamma\big|_{[r,t]}) \leq\LL(\gamma(r),r;\gamma(t),t),
}
the only possibility is that each of the two inequalities in the above display is achieved with equality.
 That is, $\gamma\big|_{[s,r]}$ and $\gamma\big|_{[r,t]}$ are geodesics, in which case \eqref{any_subdivision} follows from \eqref{concatenation}.
\end{proof}

For the arguments to come, it will be useful to have the following notation for concatenating paths.
If $\gamma_1 : [s,r'] \to \R$ and $\gamma_2 : [r',t] \to \R$ satisfy $\gamma_1(r') = \gamma_2(r')$, then $\gamma_1 \oplus \gamma_2 : [s,t]\to\R$ will denote the function defined by
\eq{
(\gamma_1 \oplus \gamma_2)(r) \coloneqq \begin{cases}
\gamma_1(r) &\text{if }r \in [s,r'], \\
\gamma_2(r) &\text{if }r \in (r',t].
\end{cases}
}
The following lemma says that if two geodesics intersect twice, then exchanging their segments between these intersections results in another geodesic.

\begin{lemma} \label{new_geodesics}
Suppose $\gamma_1 : [s_1,t_1] \to \R$ and $\gamma_2 : [s_2,t_2] \to \R$ are geodesics.
If $\gamma_1(r') = \gamma_2(r')$ and $\gamma_1(r'') = \gamma_2(r'')$ for some $r'<r''$ belonging to $[s_1,t_1]\cap[s_2,t_2]$, then each of the following paths is a geodesic:
\begin{enumerate}[label=\textup{(\roman*)}]

\item \label{new_geodesics_1}
$\gamma_1\big|_{[s_1,r']} \oplus \gamma_2\big|_{[r',r'']} \oplus \gamma_1\big|_{[r'',t_1]}$,

\item \label{new_geodesics_2}
$\gamma_1\big|_{[s_1,r']} \oplus \gamma_2\big|_{[r',r'']}$,

\item \label{new_geodesics_3} 
$\phantom{\gamma_1\big|_{[s_1,r']} \oplus }\: \: \gamma_2\big|_{[r',r'']} \oplus \gamma_1\big|_{[r'',t_1]}$.

\end{enumerate}
\end{lemma}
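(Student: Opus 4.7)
The plan is to reduce all three claims to the single claim (i), and then derive (i) directly from Lemma~\ref{subgeodesic_lemma}. The key observation is that since $r',r''\in[s_1,t_1]\cap[s_2,t_2]$, Lemma~\hyperref[subgeodesic_lemma_b]{\ref*{subgeodesic_lemma}\ref*{subgeodesic_lemma_b}} tells us that both $\gamma_1\big|_{[r',r'']}$ and $\gamma_2\big|_{[r',r'']}$ are themselves geodesics, and moreover, since $\gamma_1(r')=\gamma_2(r')$ and $\gamma_1(r'')=\gamma_2(r'')$, they have identical endpoints. Hence
\eq{
\LL\bigl(\gamma_1\big|_{[r',r'']}\bigr)=\LL(\gamma_1(r'),r';\gamma_1(r''),r'')=\LL\bigl(\gamma_2\big|_{[r',r'']}\bigr).
}
Call this common value $\ell$.

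To handle (i), write $\eta \coloneqq \gamma_1\big|_{[s_1,r']} \oplus \gamma_2\big|_{[r',r'']} \oplus \gamma_1\big|_{[r'',t_1]}$. This is a continuous path on $[s_1,t_1]$ from $(\gamma_1(s_1),s_1)$ to $(\gamma_1(t_1),t_1)$ because the concatenation matches values at $r'$ and $r''$. Applying Lemma~\hyperref[subgeodesic_lemma_a]{\ref*{subgeodesic_lemma}\ref*{subgeodesic_lemma_a}} to the partition $s_1<r'<r''<t_1$ twice, once for $\eta$ and once for $\gamma_1$, and using that the first and third pieces are identical in both paths while the middle pieces have the common length $\ell$, we obtain
\eq{
\LL(\eta)=\LL\bigl(\gamma_1\big|_{[s_1,r']}\bigr)+\ell+\LL\bigl(\gamma_1\big|_{[r'',t_1]}\bigr)=\LL(\gamma_1).
}
Since $\gamma_1$ is a geodesic between $(\gamma_1(s_1),s_1)$ and $(\gamma_1(t_1),t_1)$, so is $\eta$, proving (i).

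For (ii) and (iii), observe that these paths are precisely the restrictions of $\eta$ to the subintervals $[s_1,r'']$ and $[r',t_1]$, respectively. Since $\eta$ is a geodesic by (i), Lemma~\hyperref[subgeodesic_lemma_b]{\ref*{subgeodesic_lemma}\ref*{subgeodesic_lemma_b}} applied to the partitions $s_1<r''<t_1$ and $s_1<r'<t_1$ of $[s_1,t_1]$ immediately yields that both restrictions are geodesics in their own right.

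There is no serious obstacle here: the content of the lemma is essentially a bookkeeping exchange argument, and both ingredients---that subpaths of geodesics are geodesics, and that the length of a concatenation is additive---are supplied by Lemma~\ref{subgeodesic_lemma}. The only point requiring a modicum of care is the continuity of the concatenated path at the junctions $r'$ and $r''$, which is automatic from the hypothesis that $\gamma_1$ and $\gamma_2$ coincide there.
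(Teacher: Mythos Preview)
Your proof is correct and follows essentially the same approach as the paper: both reduce (ii) and (iii) to (i) via Lemma~\hyperref[subgeodesic_lemma_b]{\ref*{subgeodesic_lemma}\ref*{subgeodesic_lemma_b}}, and both establish (i) by using the concatenation identity from Lemma~\ref{subgeodesic_lemma} together with the fact that $\gamma_1\big|_{[r',r'']}$ and $\gamma_2\big|_{[r',r'']}$ are geodesics with identical endpoints and hence equal lengths. The only cosmetic difference is that the paper routes the computation through the point-to-point values $\LL(\gamma_1(r'),r';\gamma_1(r''),r'')$ via \eqref{any_subdivision}, whereas you compare the path lengths directly via \eqref{concatenation}; these are equivalent.
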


\begin{proof}
First notice that (ii) and (iii) follow from (i) by Lemma~\hyperref[subgeodesic_lemma_b]{\ref*{subgeodesic_lemma}\ref*{subgeodesic_lemma_b}}, and so we just prove (i).
Let us write $\gamma = \gamma_1\big|_{[s_1,r']} \oplus \gamma_2\big|_{[r',r'']} \oplus \gamma_1\big|_{[r'',t_1]}$.
We have 
\eq{
\LL(\gamma_1) 
&\stackrel{\hspace{4ex}\mbox{\footnotesize\eqref{any_subdivision}}\hspace{4ex}}{=} \LL(x,s_1;\gamma_1(r'),r') + \LL(\gamma_1(r'),r';\gamma_1(r''),r'') + \LL(\gamma_1(r''),r'';\gamma_1(t),t) \\
&\stackrel{\hspace{4ex}\phantom{\mbox{\footnotesize\eqref{any_subdivision}}}\hspace{4ex}}{=}\LL(x,s_1;\gamma_1(r'),r') + \LL(\gamma_2(r'),r';\gamma_2(r''),r'') + \LL(\gamma_1(r''),r'';\gamma_1(t),t) \\
&\stackrel{\mbox{\footnotesize{Lemma~\hyperref[subgeodesic_lemma_b]{\ref*{subgeodesic_lemma}\ref*{subgeodesic_lemma_b}}}}}{=}
\LL(\gamma_1\big|_{[s_1,r']}) + \LL(\gamma_2\big|_{[r',r'']}) + \LL(\gamma_1\big|_{[r'',t_1]})\stackrel{\mbox{\footnotesize\eqref{concatenation}}}{=} \LL(\gamma).
}
Since $\gamma_1$ is a geodesic with the same endpoints as $\gamma$, it follows from $\LL(\gamma_1) = \LL(\gamma)$ that $\gamma$ is a geodesic.
\end{proof}

\subsection{Typical and atypical properties} \label{typical_atypical}

In subsequent proofs, it will be important to know what is entailed in the almost sure event $\PP$ from Theorem~\ref{geodesics_converge}.

\begin{defn} \label{proper_def}
\cite[Sec.~13]{dauvergne-ortmann-virag?}
The function $\LL:\R^4_\uparrow\to\R$ is said to be a \textit{proper landscape}, and we say $\PP$ occurs, if the following conditions hold:
\begin{enumerate}[label=\textup{(\roman*)}]

\item \label{proper_def_1}
$\LL$ is continuous;

\item \label{proper_def_2}
for every $R>0$, there is a constant $c$ such that
\eq{
\Big|\LL(x,s;y,t) + \frac{(x-y)^2}{t-s}\Big| \leq c \quad \text{for all $(x,s;y,t)\in\R^4_\uparrow\cap[-R,R]^4$};
}

\item \label{proper_def_3}
for every $(x,s;y,t)\in\R^4_\uparrow$ and $r\in(s,t)$, the supremum in \eqref{max_prop} is achieved by some $z\in\R$;

\item \label{proper_def_4}
for every compact set $K\subset\R^4_\uparrow$, the values of $z\in\R$ achieving the supremum in \eqref{max_prop} are uniformly bounded among $(x,s;y,t)\in K$ and $r\in(s,t)$; and

\item \label{proper_def_5}
for every $x_1\leq x_2$, $y_1\leq y_2$, and $s<t$, we have
\eq{
\LL(x_2,s;y_2,t)+\LL(x_1,s;y_1,t)-\LL(x_1,s;y_2,t)-\LL(x_2,s;y_1,t) \geq 0.
}

\end{enumerate}
\end{defn}
While conditions (i)--(iv) are various quantifications of tightness, property (v) can be regarded as a deterministic fact about planar geodesic spaces, discussed in \cite[Lemma 9.1]{dauvergne-ortmann-virag?} and \cite[Thm.~1.1(1)]{basu-ganguly-hammond21}.
In particular, the measures $\mu_{x_1,x_2}$ and $\mu$ given by \eqref{mu_var_def} are well-defined because of (v).

\begin{remark} \label{proper_remark}
For later use, we note some facts about Definition \ref{proper_def}.
When $\PP$ occurs:
\begin{enumerate}[label=\textup{(\alph*)}]

\item \label{proper_remark_a}
\ref{proper_def_1} $\implies$ $\LL$ is bounded on any compact subset of $\R^4_\uparrow$;

\item \label{proper_remark_b}
 \ref{proper_def_2} $\implies$ $\LL(x,s;y,t)\to-\infty$ as $t\searrow s$, and for $\eps>0$, this divergence is uniform over $x,y,s\in[-R,R]$ such that $|x-y|\geq\eps$; and

\item \label{proper_remark_c}
\ref{proper_def_4} $\implies$ for any compact $K\subset\R^4_\uparrow$, there is a random constant $R>0$ such that
\eq{
u=(x,s;y,t)\in K,\, \gamma\in G_u \quad \implies \quad |\gamma(r)|\leq R \quad \text{for all $r\in[s,t]$}.
}
This is because Lemma~\hyperref[subgeodesic_lemma_b]{\ref*{subgeodesic_lemma}\ref*{subgeodesic_lemma_b}} implies that for any $\gamma\in G_{(x,s;y,t)}$, the value $z=\gamma(r)$ is a maximizer in \eqref{max_prop} for every $r\in(s,t)$.
See also Lemma~\ref{geodesic_construction}.

\end{enumerate}

\end{remark}

For paths $\gamma_1:[s,t]\to\R$ and $\gamma_2:[s,t]\to\R$, let us write $\gamma_1\leq\gamma_2$ if $\gamma_1(r)\leq\gamma_2(r)$ for all $r\in[s,t]$.

\begin{defn} \label{left_right_def}
For $u=(x,s;y,t)\in\R^4_\uparrow$, we say that $\gamma^\lt$ is the \textit{leftmost} geodesic in $G_u$ if $\gamma^\lt\leq\gamma$ for all $\gamma\in G_u$.
Similarly, $\gamma^\rt$ is the \textit{rightmost} geodesic in $G_u$ if $\gamma\leq\gamma^\rt$ for all $\gamma\in G_u$.
\end{defn}

Typically geodesics are unique, in which case the leftmost and rightmost geodesics are the same. 
It will be useful to record this and two other types of almost sure events concerning geodesics:

\begin{enumerate}

\item (Existence)
By \cite[Lemma 13.2]{dauvergne-ortmann-virag?}, the following event is a superset of $\PP$ and thus occurs with probability one:
\eeq{ \label{existence_event}
\EE \coloneqq \{G_u \text{ contains a leftmost and a rightmost geodesic for every }u\in\R^4_\uparrow\}.
}

\item (Uniqueness) For any fixed $u\in\R^4_\uparrow$, the measurability of the event $\{|G_u| = 1\}$ is argued in \cite[Sec.~13]{dauvergne-ortmann-virag?}.
Moreover, \cite[Thm.~12.1]{dauvergne-ortmann-virag?} 
gives $\P(|G_u| = 1)=1$. 

\item (Ordering) Consider the event
\begin{subequations}
\label{ordering_events}
\begin{linenomath}\postdisplaypenalty=0
\begin{align}
\label{global_ordering_event}
\OO \coloneqq \bigcap_{s<t}\bigcap_{x_1<x_2} \bigcap_{y_1<y_2} \{\text{for every $\gamma_1\in G_{(x_1,s;y_1,t)}, \gamma_2\in G_{(x_2,s;y_2,t)}$, we have $\gamma_1 \leq \gamma_2$}\}.
\end{align}
\end{linenomath}
That is, whenever $x_1<x_2$ and $y_1 < y_2$, the geodesics from $(x_1,s)$ to $(y_1,t)$ do not ``cross" those from $(x_2,s)$ to $(y_2,t)$.
We will soon check in Lemma~\ref{no_crosses} that $\OO$ is an almost sure event.
We will also see that for fixed $x\in\R$, the following event occurs almost surely:
\begin{linenomath}
\begin{align}
\label{ordering_event}
\OO_{x} \coloneqq \bigcap_{t>0}\bigcap_{y_1<y_2} \{\text{for every $\gamma_1\in G_{(x,0;y_1,t)}, \gamma_2\in G_{(x,0;y_2.t)}$, we have
$\gamma_1\leq\gamma_2$}\}.
\end{align}
\end{linenomath}
\end{subequations}
\end{enumerate}

The reason for geodesic ordering is explained in Figure \ref{bad_crosses}, although our proofs below are not phrased in terms of contradiction.
The fully rigorous argument transpires through Lemmas~\ref{reordering_lemma} and \ref{no_crosses}, with the aid of Lemma \ref{new_geodesics}.
Indeed, our first step is show a deterministic statement: even if violations of geodesic ordering occur, we can still form new geodesics that observe the correct ordering.
For ease of notation, whenever the time horizon $[s,t]$ is fixed, let us assume without loss of generality that $[s,t] = [0,1]$. 
(By \eqref{KPZ_scaling}, we can always rescale and shift coordinates to reduce to this case.)
In such scenarios, we simply write $G_{x,y} \coloneqq G_{(x,0;y,1)}$.

\begin{figure}
\centering
\subfloat[Violation of $\UU^\Q\subset\OO$]{
\includegraphics[trim=0.6in 0.6in 0.6in 0.6in, clip, width=0.48\textwidth]{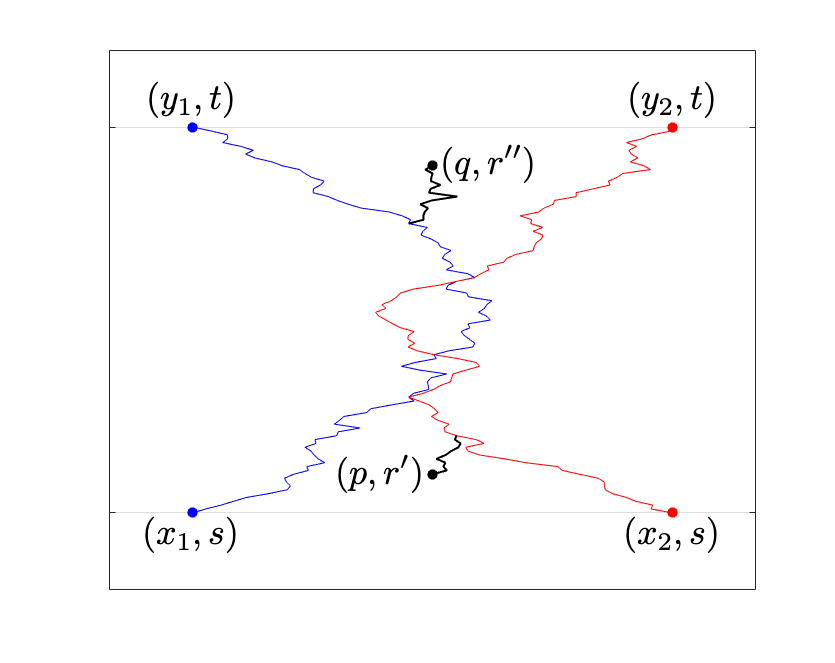}
\label{bad_cross_1}
}
\hfill
\subfloat[Violation of $\UU_x^\Q\subset\OO_x$]{
\includegraphics[trim=0.6in 0.6in 0.6in 0.64in, clip, width=0.48\textwidth]{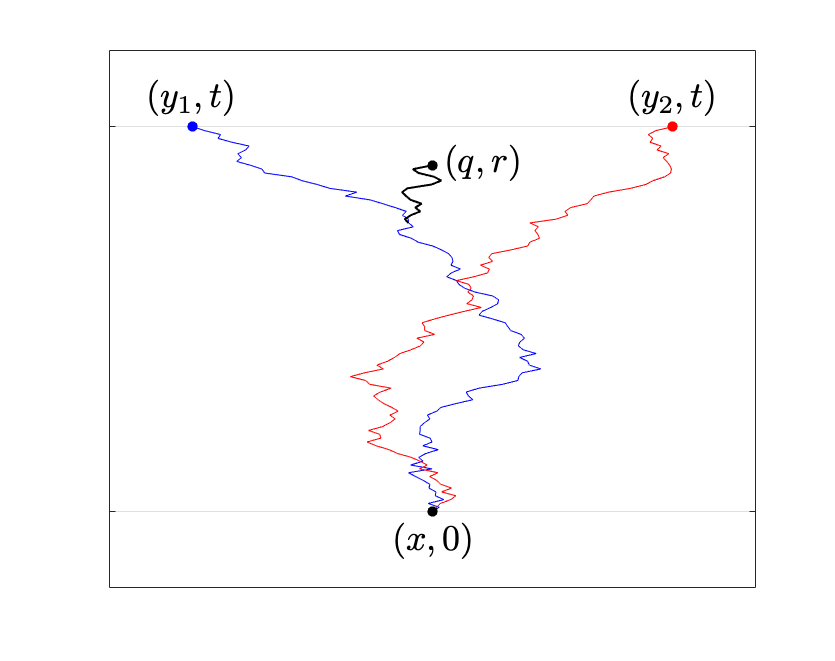}
\label{bad_cross_2}
}
\caption{Proof sketch of geodesic ordering.
Diagram (a) gives a scenario in which $\OO$ is not satisfied; that is, some geodesic $(x_1,s)\to(y_1,t)$ passes to the right of a geodesic $(x_2,s)\to(y_2,t)$.  As a consequence of planarity and concatenation, there is then more than one geodesic between suitably chosen points $(p,r')$ and $(q,r'')$ with rational coordinates.
As this last statement happens with probability zero, it follows that $\P(\OO)=1$.
A similar argument is pursued in diagram (b), which illustrates a violation of geodesic ordering when the initial endpoint $(x,0)$ is fixed.
If any two geodesics starting at $(x,0)$ separate only to later intersect before reaching their terminal locations, then we can find $(q,r)$ having rational coordinates and admitting more than one geodesic from $(x,0)$.
}
\label{bad_crosses}
\end{figure}

\begin{lemma} \label{reordering_lemma}
The following statements hold for any $x_1\leq x\leq x_2$ and $y_1\leq y\leq y_2$ such that $G_{x,y}$ is nonempty.
\begin{enumerate}[label=\textup{(\alph*)}]
\item \label{reordering_lemma_a}
For any $\gamma_1 \in G_{x_1,y_1}$, there is $\gamma \in G_{x,y}$ such that $\gamma_1 \leq \gamma$.
\item \label{reordering_lemma_b}
For any $\gamma_2 \in G_{x_2,y_2}$, there is $\gamma \in G_{x,y}$ such that $\gamma \leq \gamma_2$.
\item \label{reordering_lemma_c}
For any $\gamma_1 \in G_{x_1,y_1}$, $\gamma_2 \in G_{x_2,y_2}$ satisfying $\gamma_1\leq\gamma_2$, there is $\gamma \in G_{x,y}$ such that $\gamma_1\leq\gamma \leq \gamma_2$.
\end{enumerate}
\end{lemma}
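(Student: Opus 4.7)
The plan is to deduce (a) by an explicit $\max$-construction, (b) symmetrically by a $\min$-construction, and (c) by combining the two. For (a), let $\tilde\gamma \in G_{x,y}$ be arbitrary (such a path exists by hypothesis) and define $\gamma(r) \coloneqq \max(\gamma_1(r), \tilde\gamma(r))$. This $\gamma$ is continuous, has endpoints $(x, 0)$ and $(y, 1)$ (using $x_1 \leq x$ and $y_1 \leq y$), and satisfies $\gamma \geq \gamma_1$. The nontrivial content of the lemma is therefore that $\gamma \in G_{x,y}$; the key tool is Lemma~\ref{new_geodesics}, which allows one to interchange segments of two geodesics between two of their crossing times. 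Part (c) will follow by first applying (a) to produce $\gamma' \in G_{x,y}$ with $\gamma' \geq \gamma_1$, and then applying the symmetric $\min$-construction to $\gamma'$ and $\gamma_2$: the result $\gamma = \min(\gamma', \gamma_2)$ lies in $G_{x,y}$ and satisfies $\gamma_1 \leq \gamma \leq \gamma_2$, the lower bound relying on the standing assumption $\gamma_1 \leq \gamma_2$ together with $\gamma' \geq \gamma_1$.

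To verify $\max(\gamma_1, \tilde\gamma) \in G_{x,y}$, I would examine the open set $A \coloneqq \{r \in [0,1] : \gamma_1(r) > \tilde\gamma(r)\}$ and its (at most countably many) connected components $(r'_j, r''_j)$. Continuity of $\gamma_1 - \tilde\gamma$ together with $x_1 \leq x$ and $y_1 \leq y$ ensures $\gamma_1(r'_j) = \tilde\gamma(r'_j)$ and $\gamma_1(r''_j) = \tilde\gamma(r''_j)$ for every component. Enumerating the components as $j = 1, 2, \ldots$, let $\gamma^{(n)}$ be obtained from $\tilde\gamma$ by applying Lemma~\hyperref[new_geodesics_1]{\ref*{new_geodesics}\ref*{new_geodesics_1}} successively to swap $\tilde\gamma$ with $\gamma_1$ on each of the first $n$ components. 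An induction---in which the pairwise disjointness of the components ensures that the hypotheses of Lemma~\ref{new_geodesics} remain satisfied at every step---shows that each $\gamma^{(n)}$ lies in $G_{x,y}$, and $\gamma^{(n)}(r) \to \gamma(r)$ for every $r \in [0,1]$.

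The final step is to transfer the geodesic property from the $\gamma^{(n)}$ to $\gamma$. For any partition $0 = t_0 < t_1 < \cdots < t_k = 1$, the definition \eqref{length_def} gives
\eq{
\sum_{i=1}^k \LL\bigl(\gamma^{(n)}(t_{i-1}), t_{i-1}; \gamma^{(n)}(t_i), t_i\bigr) \geq \LL(\gamma^{(n)}) = \LL(x, 0; y, 1).
}
Sending $n \to \infty$ and using continuity of $\LL$ on $\R^4_\uparrow$ (Definition~\hyperref[proper_def_1]{\ref*{proper_def}\ref*{proper_def_1}}) preserves this inequality with $\gamma$ in place of $\gamma^{(n)}$; taking the infimum over partitions then yields $\LL(\gamma) \geq \LL(x, 0; y, 1)$, which combined with the general bound $\LL(\gamma) \leq \LL(x,0;y,1)$ establishes $\gamma \in G_{x,y}$.

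The main obstacle is that $A$ may have infinitely many components, so a single finite application of Lemma~\ref{new_geodesics} cannot produce $\gamma$ directly, and the sequence $\gamma^{(n)}$ need not converge uniformly (deviations on small untouched components could persist). The pointwise-limit argument above circumvents this because each partition sum involves only finitely many evaluations of $\gamma^{(n)}$, for which pointwise convergence together with the continuity of $\LL$ is enough.
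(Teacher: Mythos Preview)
Your argument is correct, but the paper's route is shorter because it sidesteps the infinite-component issue you identify as the main obstacle. Rather than insisting on $\gamma=\max(\gamma_1,\wt\gamma)$, the paper observes that the lemma only asks for \emph{some} $\gamma\in G_{x,y}$ with $\gamma\geq\gamma_1$. So it sets $s=\inf\{r:\gamma_1(r)=\wt\gamma(r)\}$ and $t=\sup\{r:\gamma_1(r)=\wt\gamma(r)\}$ (assuming these exist; otherwise $\wt\gamma>\gamma_1$ everywhere and we are done), notes that $\wt\gamma>\gamma_1$ on $[0,s)\cup(t,1]$, and takes $\gamma=\wt\gamma|_{[0,s]}\oplus\gamma_1|_{[s,t]}\oplus\wt\gamma|_{[t,1]}$. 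This is a single application of Lemma~\ref{new_geodesics}, and $\gamma\geq\gamma_1$ holds because on $[s,t]$ they agree while off $[s,t]$ we have $\gamma=\wt\gamma>\gamma_1$. No enumeration of components, no limit, and no appeal to continuity of $\LL$ is needed. For (c) the paper proceeds as you do---apply (b) then (a)---using the observation that its $\gamma$ satisfies $\gamma(r)\in\{\wt\gamma(r),\gamma_1(r)\}$ pointwise, which is the analogue of your $\min$/$\max$ bookkeeping. What your approach buys is the stronger conclusion that $\max(\gamma_1,\wt\gamma)$ is itself a geodesic, which the paper's one-swap construction does not yield; the price is the limiting argument.
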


\begin{proof}
The statements (a) and (b) are symmetric, and so we just prove (a).
Take any $\gamma_1 \in G_{x_1,y_1}$ and $\wt\gamma \in G_{x,y}$.
If
\eeq{ \label{reordering_setup}
\gamma_1(r) \neq \wt\gamma(r) \quad \text{for all $r\in[0,1]$},
} 
then we must have $x_1 < x$, and so together \eqref{reordering_setup} and the continuity of geodesics force $\gamma_1(r) < \wt\gamma(r)$ for all $r\in[0,1]$, as desired. 
If, on the other hand, $\gamma_1(r) = \wt\gamma(r)$ for some $r \in [0,1]$, then upon defining 
\eq{
s \coloneqq \inf\{r \geq 0 : \gamma_1(r) = \wt\gamma(r)\}, \qquad t \coloneqq \sup\{r \leq 1 : \gamma_1(r) = \wt\gamma(r)\},
}
we have 
\eeq{ \label{ordered_times}
0 \leq s \leq t \leq 1.
}
Furthermore, continuity implies
\eq{
\gamma_1(s) = \wt\gamma(s), \qquad \gamma_1(t) = \wt\gamma(t),
}
as well as
\eq{
\gamma_1(r)<\wt\gamma(r) \quad \text{for all $r\in[0,s)\cup(t,1]$}.
}
The first of the previous two displays allows us to define the path $\gamma \coloneqq \wt\gamma\big|_{[0,s]} \oplus \gamma_1\big|_{[s,t]} \oplus \wt\gamma\big|_{[t,1]}$, where if any of the inequalities in \eqref{ordered_times} is an equality, we simply omit the corresponding segment.
The second display implies that $\gamma_1 \leq \gamma$.
Finally, Lemma~\ref{new_geodesics} ensures $\gamma \in G_{x,y}$, thus completing the proof of (a).


For (c), we can apply (a) and (b) in succession upon noting that in the above proof, we had $\gamma(r)\in\{\wt\gamma(r),\gamma_1(r)\}$ for all $r\in[0,1]$.
Therefore, if $\wt\gamma$ is taken to be the element of $G_{x,y}$ resulting from (b), then $\gamma$ as defined above for (a) will necessarily satisfy
\eq{
\gamma(r) \leq \wt\gamma(r) \vee \gamma_1(r) \leq \gamma_2(r) \quad \text{for all $r\in[0,1]$},
}
in addition to $\gamma \geq \gamma_1$.
\end{proof}

We are now ready to prove geodesic ordering as described by the events $\OO$ and $\OO_x$ from \eqref{ordering_events}.
The notation of the following proof is mimicked in Figure \ref{bad_crosses}, to which the reader might refer for a visual explanation.

\begin{lemma} \label{no_crosses}
We have $\P(\OO)=1$ and $\P(\OO_{x}) = 1$  for any $x\in\R$.
\end{lemma}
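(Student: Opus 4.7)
Proof plan. Define the almost-sure events
\eq{
\UU^{\Q} \coloneqq \bigcap_{u \in \Q^4_\uparrow} \{|G_u|=1\}, \qquad \UU_x^{\Q} \coloneqq \bigcap_{\substack{y \in \Q,\, t \in \Q \\ t > 0}} \{|G_{(x,0;y,t)}|=1\};
}
each is a countable intersection of probability-one events by the uniqueness fact recorded just above (item (2)), so $\P(\UU^{\Q}) = \P(\UU_x^{\Q}) = 1$. Combined with $\P(\EE)=1$, the lemma reduces to showing the two deterministic containments $\UU^{\Q} \cap \EE \subseteq \OO$ and $\UU_x^{\Q} \cap \EE \subseteq \OO_x$. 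The two arguments have the same structure: a violation of ordering will be used to produce two distinct geodesics between a single quadruple of rational coordinates, contradicting the relevant uniqueness event.

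For the first containment, suppose $\OO$ fails and take $\gamma_1 \in G_{(x_1,s;y_1,t)}$, $\gamma_2 \in G_{(x_2,s;y_2,t)}$ with $x_1<x_2$, $y_1<y_2$ and some $r_0 \in (s,t)$ satisfying $\gamma_1(r_0)>\gamma_2(r_0)$. Set $r' \coloneqq \sup\{r \in [s,r_0] : \gamma_1(r) \leq \gamma_2(r)\}$ and $r'' \coloneqq \inf\{r \in [r_0,t] : \gamma_1(r) \leq \gamma_2(r)\}$. Continuity together with the strict endpoint orderings $\gamma_1(s)<\gamma_2(s)$ and $\gamma_1(t)<\gamma_2(t)$ forces $s<r'<r''<t$ with $\gamma_1(r')=\gamma_2(r')$, $\gamma_1(r'')=\gamma_2(r'')$, and the existence of $\delta>0$ such that $\gamma_1<\gamma_2$ strictly on $[s,s+\delta] \cup [t-\delta,t]$. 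Choose rational $\tilde r_1 \in (s,s+\delta) \cap (s,r')$ and $\tilde r_2 \in (t-\delta,t) \cap (r'',t)$, and then rational $p \in (\gamma_1(\tilde r_1),\gamma_2(\tilde r_1))$, $q \in (\gamma_1(\tilde r_2),\gamma_2(\tilde r_2))$.

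By Lemma~\ref{subgeodesic_lemma}(b), the restrictions $\gamma_i\big|_{[\tilde r_1,\tilde r_2]}$ are themselves geodesics. Lemma~\ref{reordering_lemma}(a) applied to $\gamma_1\big|_{[\tilde r_1,\tilde r_2]}$ with target $(p,q)$ yields $\eta' \in G_{(p,\tilde r_1;q,\tilde r_2)}$ satisfying $\gamma_1\big|_{[\tilde r_1,\tilde r_2]} \leq \eta'$, and Lemma~\ref{reordering_lemma}(b) applied to $\gamma_2\big|_{[\tilde r_1,\tilde r_2]}$ with the same target yields $\eta'' \in G_{(p,\tilde r_1;q,\tilde r_2)}$ satisfying $\eta'' \leq \gamma_2\big|_{[\tilde r_1,\tilde r_2]}$. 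Since $r_0 \in (\tilde r_1,\tilde r_2)$, evaluating at $r_0$ gives $\eta'(r_0) \geq \gamma_1(r_0) > \gamma_2(r_0) \geq \eta''(r_0)$, so $\eta' \neq \eta''$ and $|G_{(p,\tilde r_1;q,\tilde r_2)}| \geq 2$ at a rational quadruple, contradicting $\UU^{\Q}$.

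For $\OO_x$, the argument is parallel but with the initial endpoint fixed at $(x,0)$. Given $\gamma_1, \gamma_2$ violating ordering, define $r''$ as above (so $r_0<r''<t$ and $\gamma_1<\gamma_2$ strictly on $(t-\delta,t]$ for some $\delta>0$); because the initial endpoints coincide, no analogous rational $\tilde r_1$ with a useful strict inequality on the left is available in general, so we simply anchor at time $0$. Select rational $\tilde r_2 \in (r'',t) \cap (t-\delta,t)$ and rational $q \in (\gamma_1(\tilde r_2),\gamma_2(\tilde r_2))$. Applying Lemma~\ref{reordering_lemma}(a) to $\gamma_1\big|_{[0,\tilde r_2]}$ and Lemma~\ref{reordering_lemma}(b) to $\gamma_2\big|_{[0,\tilde r_2]}$, both with intermediate terminal $(x,q)$, produces $\eta', \eta'' \in G_{(x,0;q,\tilde r_2)}$ with $\eta'(r_0) \geq \gamma_1(r_0) > \gamma_2(r_0) \geq \eta''(r_0)$. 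Hence $|G_{(x,0;q,\tilde r_2)}| \geq 2$ at a rational terminal, contradicting $\UU_x^{\Q}$. The main delicate point in both parts is the continuity-based existence of the strict-inequality neighborhoods ensuring the rational times $\tilde r_1,\tilde r_2$ (or just $\tilde r_2$) can be selected; this is immediate from the strict orderings at the endpoints.
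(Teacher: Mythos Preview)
Your proof is correct and follows essentially the same approach as the paper's own argument: both pick rational times near the endpoints where the strict inequality $\gamma_1<\gamma_2$ holds, select rational spatial coordinates in between, and use Lemma~\ref{reordering_lemma} on the restricted geodesics to compare with elements of the rational geodesic set. The only difference is in presentation: the paper argues directly (assume uniqueness at rationals, then the single geodesic $\gamma$ from Lemma~\ref{reordering_lemma} must satisfy $\gamma_1\leq\gamma\leq\gamma_2$), whereas you argue by contrapositive (assume an ordering violation at $r_0$, then the two geodesics $\eta',\eta''$ produced by Lemma~\ref{reordering_lemma}(a) and (b) differ at $r_0$, contradicting uniqueness). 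These are logically equivalent phrasings of the same idea.
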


\begin{proof}
Let us define the events
\eq{ 
\UU^\Q \coloneqq \bigcap_{\substack{r',r''\in\Q \\ r'<r''}}\bigcap_{p,q\in\Q} \{|G_{(p,r';q,r'')}|=1\}, \qquad
\UU^\Q_{x} \coloneqq \bigcap_{r \in \Q\cap(0,\infty)}\bigcap_{q\in\Q}\{|G_{(x,0;q,r)}|=1\}.
}
Since $\Q$ is countable and $\P(|G_u|=1)=1$ for any $u\in\R^4_\uparrow$, we have $\P(\UU^\Q)=\P(\UU^\Q_{x}) = 1$.
Therefore, if we can show that $\OO\supset\UU^\Q$ and $\OO_{x}\supset\UU^\Q_{x}$,
then $\OO$ and $\OO_{x}$ are necessarily measurable by Remark~\ref{complete_remark}, and also $\P(\OO)=\P(\OO_{x})=1$.
Let us first prove $\UU^\Q_{x} \subset \OO_{x}$, as the argument for $\UU^\Q\subset\OO$ will require only slight modifications.

Assume $\UU^\Q_{x}$ occurs.
Consider any time $t>0$, any positions $y_1 < y_2$, and any geodesics $\gamma_1\in G_{(x,0;y_1,t)}$ and $\gamma_2\in G_{(x,0;y_2,t)}$.
Since $y_1 < y_2$ and $\gamma_1$ and $\gamma_2$ are continuous functions, there is some $\eps\in(0,t)$ such that $\gamma_1(r) < \gamma_2(r)$ for all $r\in[t-\eps,t]$.
So take any $r \in [t-\eps,t]\cap\Q$ and pick any $q\in(\gamma_1(r),\gamma_2(r))$; consider the unique $\gamma \in G_{(x,0;q,r)}$.
Given this uniqueness, we can apply Lemma~\hyperref[reordering_lemma_a]{\ref*{reordering_lemma}\ref*{reordering_lemma_a}} to conclude $\gamma_1\big|_{[0,r]} \leq \gamma$.
Analogously, by Lemma~\hyperref[reordering_lemma_b]{\ref*{reordering_lemma}\ref*{reordering_lemma_b}}, we must also have $\gamma\leq\gamma_2\big|_{[0,r]}$.
Together, these two facts yield $\gamma_1\big|_{[0,r]}\leq\gamma_2\big|_{[0,r]}$.
Since our choice of $r$ ensures $\gamma_1\big|_{[r,t]}\leq\gamma_2\big|_{[r,t]}$, we thus have $\gamma_1\leq\gamma_2$.
Indeed, $\OO_x$ has occurred.

Now assume the occurrence of $\UU^\Q$.
Consider any $x_1<x_2$, $y_1<y_2$, $s<t$, and any geodesics $\gamma_1\in G_{(x_1,s;y_1,t)}$ and $\gamma_2\in G_{(x_2,s;y_2,t)}$.
Since $\gamma_1(s) = x_1<x_2 = \gamma_2(s)$, continuity guarantees that $\gamma_1(r)<\gamma_2(r)$ for all $r\in[s,s+\eps]$, for some $\eps>0$.
By symmetric reasoning, we may assume $\gamma_1(r)<\gamma_2(r)$ for all $r\in[t-\eps,t]$.
Now pick any rational times $r'\in\Q\cap[s,s+\eps]$, $r''\in\Q\cap[t-\eps,t]$, as well as rational spatial coordinates $p\in(\gamma_1(r'),\gamma_2(r'))$, $q\in(\gamma_1(r''),\gamma_2(r''))$.
By assumption, there is a unique $\gamma\in G_{(p,r';q,r'')}$.
Moreover, Lemma~\hyperref[subgeodesic_lemma_b]{\ref*{subgeodesic_lemma}\ref*{subgeodesic_lemma_b}} ensures that $\gamma_1\big|_{[r',r'']}$ and $\gamma_2\big|_{[r',r'']}$ are themselves geodesics.
Therefore, the same argument as above (using Lemma~\ref{reordering_lemma} with a rescaled time horizon) yields
\eq{
\gamma_1(r)\leq\gamma(r)\leq\gamma_2(r) \quad \text{for all $r\in[r',r'']$}.
}
But of course, since $r'\leq s+\eps$ and $r''\geq t-\eps$, we also know 
\eq{
\gamma_1(r) < \gamma_2(r) \quad \text{for all $r\in[s,r']\cup[r'',t]$}.
}
Hence $\gamma_1(r)\leq\gamma_2(r)$ at every $r\in[s,t]$, as desired so that $\OO$ is seen to occur.
\end{proof}


While Lemmas~\ref{reordering_lemma} and \ref{no_crosses} give us control over violations of geodesic ordering, the final result of this section considers violations of geodesic uniqueness.
It is not required elsewhere in the paper, but rather included as an incidental result.
Given $x\in\R$, let us consider the set 
\eq{ 
\MM_{x} \coloneqq \{y \in \R : |G_{x,y}| \geq 2\}.
}

\begin{lemma} \label{non_unique_lemma}
On the almost sure event $\OO_x$ from \eqref{ordering_event}, the set $\MM_{x}$ is countable.
\end{lemma}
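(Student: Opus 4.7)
The plan is to associate to each $y \in \MM_x$ a pair of rational coordinates $(r_y, q_y) \in \Q \cap (0,1) \times \Q$ in such a way that the map $y \mapsto (r_y, q_y)$ is injective; countability of $\MM_x$ then follows from countability of $\Q^2$.

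Concretely, for each $y \in \MM_x$, I would start by choosing two distinct geodesics $\gamma^{(1)}_y, \gamma^{(2)}_y \in G_{x,y}$. Both paths agree at the endpoints $r=0$ and $r=1$, yet they are distinct continuous functions on $[0,1]$, so there must exist some time at which they separate. By density of the rationals, I can pick $r_y \in \Q \cap (0,1)$ with $\gamma^{(1)}_y(r_y) \neq \gamma^{(2)}_y(r_y)$, and then a rational $q_y$ lying strictly between $\gamma^{(1)}_y(r_y)$ and $\gamma^{(2)}_y(r_y)$.

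The key step is then to use the ordering event $\OO_x$ to show the map $y \mapsto (r_y, q_y)$ is injective on $\MM_x$. Suppose, toward a contradiction, that $y_1 < y_2$ are both in $\MM_x$ with $(r_{y_1}, q_{y_1}) = (r_{y_2}, q_{y_2}) = (r,q)$. On $\OO_x$, every geodesic from $(x,0)$ to $(y_1, 1)$ lies pointwise weakly below every geodesic from $(x,0)$ to $(y_2, 1)$. In particular,
\[
\max\bigl(\gamma^{(1)}_{y_1}(r), \gamma^{(2)}_{y_1}(r)\bigr) \leq \min\bigl(\gamma^{(1)}_{y_2}(r), \gamma^{(2)}_{y_2}(r)\bigr).
\]
But by construction $q$ is strictly less than $\max\bigl(\gamma^{(1)}_{y_1}(r), \gamma^{(2)}_{y_1}(r)\bigr)$ and strictly greater than $\min\bigl(\gamma^{(1)}_{y_2}(r), \gamma^{(2)}_{y_2}(r)\bigr)$, which contradicts the displayed inequality. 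Hence $(r_{y_1}, q_{y_1}) \neq (r_{y_2}, q_{y_2})$ whenever $y_1 \neq y_2$ in $\MM_x$, and the injectivity is established.

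I do not anticipate a serious obstacle here; the main point is simply that the ordering property $\OO_x$ rules out the only way two distinct elements of $\MM_x$ could share a witness pair $(r,q)$, namely by having their associated ``fans'' of geodesics interleave at some time. Minor care is needed to handle measurability or the choice of geodesics $\gamma^{(i)}_y$ (one can, for instance, use the leftmost and rightmost geodesics from Definition~\ref{left_right_def}, whose existence is guaranteed by $\EE \supset \OO_x$ up to a null set, or simply invoke the axiom of choice since we only need countability of the image), but no deeper input is required.
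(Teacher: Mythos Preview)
Your proposal is correct and follows essentially the same approach as the paper: both arguments associate to each $y\in\MM_x$ a rational pair $(r_y,q_y)$ witnessing the separation of two geodesics in $G_{x,y}$, and then use the ordering event $\OO_x$ to rule out collisions of these labels for distinct $y_1<y_2$. The only cosmetic difference is that the paper fixes an ordering $\gamma_y(r_y)<\wt\gamma_y(r_y)$ at the outset, whereas you phrase the contradiction via $\max$ and $\min$; the content is identical.
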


\begin{proof} 
Suppose $y\in\R$ is such that $G_{x,y}$ contains two distinct elements $\gamma_{y}$ and $\wt\gamma_{y}$.
Without loss of generality, $\gamma_{y}(r_y) < \wt\gamma_{y}(r_y)$ for some $r_y\in(0,1)$, where we may assume by continuity that $r_y\in\Q$.
Moreover, we can choose $q_y\in\Q$ such that
\eq{
\gamma_{y}(r_y) < q_y < \wt\gamma_{y}(r_y).
}
Now, if $y_1 < y_2$ and both $|G_{x,y_1}|$ and $|G_{x,y_2}|$ are at least 2, then it must be that $(q_{y_1},r_{y_1}) \neq (q_{y_2},r_{y_2})$.
Indeed, we would otherwise have
\eq{
\gamma_{y_2}(r_{y_2}) < q_{y_2}=q_{y_1} < \wt\gamma_{y_1}(r_{y_1}) = \wt\gamma_{y_1}(r_{y_2}),
}
which is exactly the scenario ruled out by $\OO_{x}$.
In summary, each $y$ for which $|G_{x,y}| \geq 2$ can be associated uniquely to some element of the countable set $\Q \times \Q$.
The claim of the lemma is thus evident.
\end{proof}


\section{Proofs of input results} \label{disjoint_geo_rarity_proofs}

In Section~\ref{polymer_convergence}, we prove Theorem~\ref{polymers_converge} and Corollary~\ref{eventual_disjointness}.
We then use Corollary~\ref{eventual_disjointness} in Section~\ref{rarity_section} to deduce Theorem~\ref{disjoint_geo_rarity}  from the corresponding result in \cite{hammond20}.
Corollary~\ref{disjoint_2geo_rarity} will follow from a brief topological argument.
Finally, Section~\ref{no_arbitrary_closeness_proof} gives the proof of Theorem~\ref{no_arbitrary_closeness}.

\subsection{Convergence of polymers} \label{polymer_convergence}

Throughout this section, we fix $u = (x,s;y,t)\in\R^4_\uparrow$ and assume the setting of Theorem~\ref{geodesics_converge}.
That is, $G_u$ consists of a single element $\gamma_u$, the event $\PP$ occurs, and for each $n$, we have chosen 
\eq{
\vphi_n : [sn+2n^{2/3}x,tn+2n^{2/3}y)\to\llbrack \lfloor sn\rfloor,\lfloor tn\rfloor\rrbrack
}
such that $\Gamma^{(\vphi_n)}_{n,u}\in G_{n,u}$ (equivalently, $R_n(\vphi_n) \in P_{n,u}$).
By Theorem~\ref{geodesics_converge}, we have the following uniform convergence of functions on $[s,t]$:
\eeq{ \label{uniform_convergence}
\lim_{n\to\infty} \|\Gamma^{(\vphi_n)}_{n,u} - \gamma_u\|_{\infty} = 0.
}
We preface the proof of Theorem~\ref{polymers_converge} with the following simple observations about the geometry of $n$-polymers and $n$-geodesics.
The reader may find Figure~\ref{blpp_fig} to be a useful reference.

\begin{lemma} \label{to_good_pts}
Assume the setting of Theorem~\ref{geodesics_converge}.
Then for any $\eps>0$, there is $N$ such that for all $n\geq N$, we have the following:
\begin{enumerate}[label=\textup{(\alph*)}]

\item \label{to_good_pts_a}
Every horizontal segment in $\wt R_{n,u}(\vphi_n)$ has length at most $\eps$.

\item \label{to_good_pts_b}
 Every oblique segment in $R_n(\vphi_n)$ has horizontal width at most $\eps$.
 
\item \label{to_good_pts_c}
Every oblique segment in $R_n(\vphi_n)$ has vertical height at most $2\eps n^{-1/3}$.
\end{enumerate}
\end{lemma}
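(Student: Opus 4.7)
The plan is to reduce all three parts to a single inequality on the maximum jump height of $\vphi_n$, and then deduce that inequality from the uniform convergence $\|\Gamma_{n,u}^{(\vphi_n)} - \gamma_u\|_\infty \to 0$ afforded by Theorem~\ref{geodesics_converge}.

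First I would fix $\eps>0$, set $J_n$ to be the largest jump of $\vphi_n$ on its domain (also counting the initial and final multiplicities among the partition points $z_k$), and unpack the geometric dictionary summarized in Figure~\ref{blpp_fig}. Directly from the definitions, a jump of $\vphi_n$ of height $m$ at a point $z$ corresponds to a horizontal segment of $\wt R_{n,u}(\vphi_n)$ of length $m/(2n^{2/3})$ and to a maximal oblique segment of $R_n(\vphi_n)$ of horizontal width $m/(2n^{2/3})$ and vertical height $m/n$; conversely, every horizontal segment of $\wt R_{n,u}(\vphi_n)$ and every maximal oblique segment of $R_n(\vphi_n)$ arises in this way. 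Consequently \ref{to_good_pts_a} and \ref{to_good_pts_b} are each equivalent to $J_n/(2n^{2/3})\le\eps$, while \ref{to_good_pts_c} asks for $J_n/n\le 2\eps n^{-1/3}$, i.e.\ the same inequality. It therefore suffices to show $J_n\le 2n^{2/3}\eps$ for all $n$ sufficiently large.

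Next I would translate this into a property of $\Gamma_{n,u}^{(\vphi_n)}$. Because $\vphi_n$ is right-continuous and non-decreasing and $L_{n,u}$ is strictly increasing and affine for $n$ large, $\Gamma_{n,u}^{(\vphi_n)}$ is right-continuous on $[s,t)$ with a downward jump of exactly $m/(2n^{2/3})$ at the unique time $r^*\in(s,t)$ satisfying $L_{n,u}(r^*)=z$ whenever $\vphi_n$ has a jump of height $m$ at $z$. The boundary multiplicities produce one-sided defects of the same size, visible in $|\Gamma_{n,u}^{(\vphi_n)}(s)-x|$ and $|\Gamma_{n,u}^{(\vphi_n)}(t)-y|$ up to an $O(n^{-2/3})$ error coming from $sn-\lfloor sn\rfloor$ and $tn-\lfloor tn\rfloor$.

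For the final step I would invoke uniform continuity: since $\gamma_u$ is continuous on the compact interval $[s,t]$, there is $\eta>0$ with $|\gamma_u(r_1)-\gamma_u(r_2)|<\eps/2$ whenever $|r_1-r_2|<\eta$. By Theorem~\ref{geodesics_converge}, one may then choose $N$ large enough that $\|\Gamma_{n,u}^{(\vphi_n)}-\gamma_u\|_\infty<\eps/4$ for all $n\ge N$. At any interior discontinuity $r^*$ of $\Gamma_{n,u}^{(\vphi_n)}$, applying the triangle inequality to $\Gamma_{n,u}^{(\vphi_n)}(r_1)-\Gamma_{n,u}^{(\vphi_n)}(r_2)$ with $r_1<r^*<r_2$ and $r_2-r_1<\eta$, and then letting $r_1\nearrow r^*$ and $r_2\searrow r^*$, yields a jump bound of $\eps$; the boundary defects are controlled directly by $\|\Gamma_{n,u}^{(\vphi_n)}-\gamma_u\|_\infty$ together with the $O(n^{-2/3})$ correction noted above. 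Combining these estimates gives $J_n/(2n^{2/3})\le\eps$, as required. The only substantive step is setting up the geometric dictionary in the first paragraph; after that, the conclusion is the essentially deterministic fact that a uniformly convergent sequence of right-continuous step-like functions cannot retain macroscopic jumps if the limit is continuous.
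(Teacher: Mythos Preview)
Your proposal is correct and follows essentially the same approach as the paper: both arguments reduce to bounding the jump discontinuities of $\Gamma_{n,u}^{(\vphi_n)}$ via the uniform continuity of $\gamma_u$ together with the uniform convergence from Theorem~\ref{geodesics_converge}, and both use the same geometric dictionary between jumps of $\vphi_n$, horizontal segments of $\wt R_{n,u}(\vphi_n)$, and oblique segments of $R_n(\vphi_n)$. The only cosmetic difference is that the paper argues sequentially (proving \ref{to_good_pts_a}, then deducing \ref{to_good_pts_b} from \ref{to_good_pts_a} via the equal-width correspondence, then \ref{to_good_pts_c} from \ref{to_good_pts_b} via the slope $-2n^{-1/3}$), whereas you unify all three parts through the single quantity $J_n$; your extra remarks on boundary multiplicities are additional care the paper omits.
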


\begin{proof}
Fix $\eps>0$.
Notice that if $\wt R_{n,u}(\vphi_n)$ has a horizontal segment at height $r$, then $\Gamma^{(\vphi_n)}_{n,u}(\cdot)$ has a jump discontinuity at time $r$, where the size of the jump is exactly equal to the length of the horizontal segment.
Therefore, to satisfy (a), it suffices to choose $N$ large enough that for all $n\geq N$, every discontinuity of $\Gamma^{(\vphi_n)}_{n,u}$ is no larger than $\eps$.
Such an $N$ exists by \eqref{uniform_convergence} and the uniform continuity of $\gamma_u$.

Now (b) follows from (a) because every oblique segment in $R_n(\vphi_n)$ corresponds to a horizontal segment in $\wt R_{n,u}(\vphi_n)$ of the same width.
Finally, (c) follows from (b) because the slope of any oblique segment in $R_n(\vphi_n)$ is $-2n^{-1/3}$.
\end{proof}

\begin{proof}[Proof of Theorem~\ref{polymers_converge}]
First we prove \eqref{coupled_polymers}.
Fix any $\eps>0$.
Observe that the horizontal segments in $R_n(\vphi_n)$, minus their rightmost points, consist entirely of points $(z,r)$ of the form
\eeq{ \label{normal_form}
(z,r) = R_n(z',\vphi_n(z')), \quad z' \in [sn+2n^{2/3}x,tn+2n^{2/3}y).
}
Similarly, the oblique segments in $\wt R_{n,u}(\vphi_n)$, minus their uppermost points, consist entirely of points $(\tilde z,\tilde r)$ of the form
\eeq{ \label{tilde_form}
 \tilde z = \Gamma_{n,u}^{(\vphi_n)}(\tilde r), \quad L_{n,u}(\tilde r) = z' \in [sn+2n^{2/3}x,tn+2n^{2/3}y).
}
Therefore, these two categories of points are in bijection $(z,r) \leftrightarrow (\tilde z,\tilde r)$ through the unscaled coordinate $z'$.
If we can show that
\eeq{ \label{pts_in_bijection}
\limsup_{n\to\infty} \sup_{z'} \|(z,r)-(\tilde z,\tilde r)\| = 0,
}
then we claim \eqref{coupled_polymers} holds.
Indeed, Lemma~\hyperref[to_good_pts_a]{\ref*{to_good_pts}\ref*{to_good_pts_a}} shows that for $n \geq N$, every point in $\wt R_{n,u}(\vphi_n)$ is within distance $\eps$ of some $(\tilde z,\tilde r)$ of the form \eqref{tilde_form}.
Meanwhile, Lemma~\hyperref[to_good_pts_b]{\ref*{to_good_pts}(b,c)} shows that every point in $R_{n}(\vphi_n)$ is within distance $\eps + 2\eps n^{-1/3}$ of some $(z,r)$ of the form \eqref{normal_form}.
Consequently, \eqref{pts_in_bijection} leads to
\eq{
\limsup_{n\to\infty} \dist_H(R_n(\vphi_n),\wt R_{n,u}(\vphi_n)) \leq \limsup_{n\to\infty} \Big[2\eps+2\eps n^{-1/3} + \sup_{z'}\|(z,r)-(\tilde z,\tilde r)\| \Big] = 2\eps.
}
As $\eps>0$ is arbitrary, \eqref{coupled_polymers} follows.
We now proceed to establish \eqref{pts_in_bijection}.

Because of \eqref{uniform_convergence}, there exist random $L$ (depending only on $\gamma_u$) and random $N$ large enough that
\eeq{ \label{horizontal_bd}
|\Gamma_{n,u}^{(\vphi_n)}(r)| \leq L \quad \text{for all $r\in[s,t]$, $n\geq N$}.
}
Fix any $z'\in[sn+2n^{2/3}x,tn+2n^{2/3}y)$, and consider $(z,r)$ and $(\tilde z,\tilde r)$ as defined through \eqref{normal_form} and \eqref{tilde_form}.
In particular,
\eq{
z = \frac{z' - \vphi(z')}{2n^{2/3}} = \frac{L_{n,u}(\tilde r) - \vphi_n(L_{n,u}(\tilde r))}{2n^{2/3}} = \Gamma^{(\vphi_n)}_{n,u}(\tilde r) = \tilde z,
}
and so
\eeq{ \label{no_z_diff}
\|(z,r) - (\tilde z,\tilde r)\| = |r - \tilde r|.
}
Now observe that
\eq{
L_{n,u}(\tilde r) = z' = 2n^{2/3}\Gamma_{n,u}^{(\vphi_n)}(\tilde r) + rn \quad \implies \quad  r =\tilde r + 2n^{-1/3}\Big( \frac{t-\tilde r}{t-s}x + \frac{\tilde r-s}{t-s}y-\Gamma^{(\vphi_n)}_{n,u}(\tilde r)\Big),
}
from which we can deduce, by \eqref{horizontal_bd}, the uniform bound
\eeq{ \label{uniform_r_diff}
| r - \tilde r| \leq 2n^{-1/3}( |x| + |y|+L).
}
Together, \eqref{no_z_diff} and \eqref{uniform_r_diff} imply \eqref{pts_in_bijection}, and so \eqref{coupled_polymers} has been proved.

Now we turn our attention to showing \eqref{polymer_geodesic}, which is clearly implied by the following statement:
%
%
\eeq{ \label{eventual_disjointness_prep}
\lim_{n\to\infty} \sup_{(z,r)\in R_n(\vphi_n)} |z - \gamma_u(r)| = 0.
}
So let us just establish \eqref{eventual_disjointness_prep}.
%
If we denote, for each $r\in[s,t]$, the leftmost and rightmost points of $(\R\times\{r\})\cap R_{n}(\vphi_n)$ by
\eq{
a_n(r) \coloneqq \inf\{z \in \R : (z,r) \in R_{n}(\vphi_n)\}, \qquad b_n(r) \coloneqq \sup\{z \in \R : (z,r) \in  R_{n}(\vphi_n)\},
}
then \eqref{eventual_disjointness_prep} is equivalent to
\eeq{ \label{left_right}
a_n(r) \to \gamma_u(r) \quad \text{and} \quad b_n(r) \to \gamma_u(r) \quad \text{uniformly in $r\in[s,t]$}.
}
To argue \eqref{left_right}, let us consider the analogous quantities for $\wt R_{n,u}(\vphi_n)$, namely
\eq{
\tilde a_n(r) \coloneqq \inf\{z \in \R : (z,r) \in \wt R_{n,u}(\vphi_n)\}, \qquad
\tilde b_n(r) \coloneqq \sup\{z \in \R : (z,r) \in \wt R_{n,u}(\vphi_n)\},
}
and observe (perhaps with the aid of Figure~\ref{geodesic_shade}) that
\eq{
\tilde a_n(r) = \Gamma^{(\vphi_n)}_{n,u}(r), \qquad  
\tilde b_n(r) = \begin{cases}
\lim_{r'\nearrow r} \Gamma^{(\vphi_n)}_{n,u}(r') &\text{if } r\in(s,t], \\
\tilde a_n(s) &\text{if }r=s.
\end{cases}
}
By \eqref{uniform_convergence}, we then have 
\eeq{ \label{tilde_left_right}
\tilde a_n(r) \to \gamma_u(r) \quad \text{and} \quad \tilde b_n(r) \to \gamma_u(r) \quad \text{uniformly in $r\in[s,t]$}.
}
Now let $\eps > 0$ and choose $\delta\in(0,\eps]$ sufficiently small that 
\eq{
|\gamma_u(r')-\gamma_u(r)| \leq \eps \quad \text{whenever $r,r'\in[s,t]$, $|r-r'|\leq\delta$}.
}
By \eqref{tilde_left_right} and \eqref{coupled_polymers}, we can select $N$ such that for all $n\geq N$, we have
\eq{
|\tilde a_n(r)-\gamma_u(r)| \leq \eps \quad \text{and} \quad |\tilde b_n(r)-\gamma_u(r)| \leq \eps \quad \text{for all $r\in[s,t]$},
}
as well as
\eq{
\dist_H(R_n(\vphi_n),\wt R_{n,u}(\vphi_n)) \leq \delta.
}
Since $(a_n(r),r)\in R_n(\vphi)$, it follows from the above display that
\eq{
\inf_{(\tilde z,\tilde r)\in \wt R_{n,u}(\vphi_n)} \|(a_n(r),r) - (\tilde z,\tilde r)\| \leq \delta \quad \text{for all $r\in[s,t]$, $n\geq N$},
}
which can be trivially rewritten as
\eq{
\inf_{\substack{(\tilde z,\tilde r)\in \wt R_{n,u}(\vphi_n) \\ \tilde r \in [r-\delta,r+\delta]}} \|(a_n(r),r) - (\tilde z,\tilde r)\| \leq \delta \quad \text{for all $r\in[s,t]$, $n\geq N$}.
}
On the other hand, for any $(\tilde z,\tilde r)\in\wt R_{n,u}(\vphi_n)$ with $|\tilde r-r|\leq \delta$, we have
\eq{
|\tilde z - \gamma_u(r)| &\leq |\tilde z - \gamma_u(\tilde r)| + |\gamma_u(\tilde r) - \gamma_u(r)| \\
&\leq |\tilde a_n(\tilde r) - \gamma_u(\tilde r)|+|\tilde b_n(\tilde r) - \gamma_u(\tilde r)| + |\gamma_u(\tilde r) - \gamma_u(r)| \leq 3\eps.
}
Together, the two previous displays imply
\eq{
|a_n(r) - \gamma_u(r)| \leq \delta+3\eps \leq 4\eps \quad \text{for all $r\in[s,t]$, $n\geq N$},
}
and an analogous argument shows
\eq{
|b_n(r) - \gamma_u(r)| \leq \delta+3\eps \leq 4\eps \quad \text{for all $r\in[s,t]$, $n\geq N$}.
}
As $\eps>0$ is arbitrary, we conclude that \eqref{left_right} holds.
\end{proof}

Given the convergence \eqref{polymer_geodesic} from Theorem~\ref{polymers_converge} (or equivalently, \eqref{eventual_disjointness_prep}), it is a simple matter to verify Corollary~\ref{eventual_disjointness}.

\begin{proof}[Proof of Corollary~\ref{eventual_disjointness}]
Recall the notation from the statement of the corollary.
It is trivial that
\eq{
&\hspace{1.1ex}\inf\{|z_1-z_2| : (z_1,r)\in R_n(\vphi_n),(z_2,r)\in R_n(\phi_n),r\in[s,t]\}\\
\geq &\inf_{r\in[s,t]}|\gamma_{u_1}(r) - \gamma_{u_2}(r)|
- \sup_{(z_1,r_1)\in R_n(\vphi_n)} |z_1 - \gamma_{u_1}(r)|
- \sup_{(z_2,r_2)\in R_n(\phi_n)} |z_2 - \gamma_{u_2}(r_2)|.
}
Under the hypotheses of the corollary, we have
\eq{
\inf_{r\in[s,t]}|\gamma_{u_1}(r) - \gamma_{u_2}(r)| > 0,
}
while \eqref{eventual_disjointness_prep} gives
\eq{
\lim_{n\to\infty} \sup_{(z_1,r_1)\in R_n(\vphi_n)} |z_1 - \gamma_{u_1}(r_1)| 
= \lim_{n\to\infty} \sup_{(z_2,r_2)\in R_n(\phi_n)} |z_2 - \gamma_{u_2}(r_2)| = 0.
}
Therefore, for all $n$ sufficiently large, we have
\eq{
\inf\{|z_1-z_2| : (z_1,r)\in R_n(\vphi_n),(z_2,r)\in R_n(\phi_n),r\in[s,t]\} > 0,
}
meaning that $R_n(\vphi_n)$ and $R_n(\phi_n)$ are disjoint.
\end{proof}

\subsection{Tail estimates for the size of a disjoint collection of geodesics} \label{rarity_section}
Before proving Theorem~\ref{disjoint_geo_rarity}, we need to know that the relevant random variable is measurable.

\begin{prop} \label{disjoint_measurable}
For any times $s<t$ and subsets $A,B,C\subset\R$ with $C$ countable, the quantity \linebreak $\MDG_{s,t}^C(A,B)$ is a measurable random variable almost surely taking values in $\{1,2,\dots\}\cup\{\infty\}$.
\end{prop}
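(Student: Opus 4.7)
My plan is to decompose $\{\MDG_{s,t}^C(A,B) \geq k\}$ into countably many events that admit a measurable characterization in terms of a carefully chosen selection of geodesics. Because $C$ is countable, the collection of $k$-tuples $F = ((x_j, y_j))_{j=1}^k$ with entries in $(A \cap C) \times (B \cap C)$ is itself countable. Setting
\begin{align*}
E_F \coloneqq \big\{\exists\, \gamma_j \in G_{(x_j, s; y_j, t)},\ j=1,\dots,k,\ \text{mutually disjoint}\big\},
\end{align*}
we have $\{\MDG_{s,t}^C(A,B) \geq k\} = \bigcup_F E_F$, so it suffices to prove $E_F \in \FF$ for each finite $F$. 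The exceptional value $\infty$ corresponds to the countable intersection $\bigcap_k \{\MDG \geq k\}$, and the almost-sure lower bound $\MDG \geq 1$ (when the intersections are nonempty) follows from the existence of geodesics on $\EE$.

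The key step is constructing a measurable surrogate for the geodesic between each fixed pair of endpoints. Let $U$ denote the event $\bigcap_{(x, y) \in (A \cap C) \times (B \cap C)} \{|G_{(x, s; y, t)}| = 1\}$; as a countable intersection of probability-one events, $\P(U) = 1$. On $U$, the unique geodesic $\gamma_{(x, s; y, t)}(r)$ is characterized pointwise as the unique $z \in \R$ maximizing $\LL(x, s; z, r) + \LL(z, r; y, t)$ for each $r \in (s, t)$. Extend this to all of $\Omega$ by defining $\tilde\gamma_{(x, s; y, t)}(r)(\omega)$ to be the leftmost such maximizer, which exists and is bounded uniformly over compacts by Definition~\hyperref[proper_def_3]{\ref*{proper_def}\ref*{proper_def_3}}--\hyperref[proper_def_4]{\ref*{proper_def}\ref*{proper_def_4}}. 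Setting $\Psi(z, \omega) \coloneqq \LL(u) - \LL(x, s; z, r) - \LL(z, r; y, t) \geq 0$, we have for any $a \in \R$ that
\begin{align*}
\{\tilde\gamma_{(x, s; y, t)}(r) \leq a\} = \Big\{\inf_{z \in \Q \cap (-\infty, a]} \Psi(z, \omega) = 0\Big\},
\end{align*}
where the reduction from $(-\infty, a]$ to $\Q \cap (-\infty, a]$ uses continuity of $\LL$ in $z$; hence $\tilde\gamma_{(x, s; y, t)}(r)$ is measurable in $\omega$ for each fixed $r$.

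With these measurable surrogates in hand, I would reduce $E_F$ to a countable Boolean combination of measurable events. Without loss of generality $x_1 < \cdots < x_k$, else two starting locations coincide and $E_F = \varnothing$. If the terminal ordering is violated, the intermediate value theorem applied pairwise to candidate geodesics forces them to share a common point, so again $E_F = \varnothing$. Otherwise $y_1 < \cdots < y_k$, and on $U$ the event $E_F$ coincides with strict pointwise ordering of the measurable paths $\tilde\gamma_{(x_j, s; y_j, t)}$. By continuity on the compact interval $[s, t]$, this is equivalent to
\begin{align*}
\bigcup_{n \geq 1} \bigcap_{r \in (\Q \cap (s, t)) \cup \{s, t\}} \bigcap_{j=1}^{k-1} \Big\{\tilde\gamma_{(x_{j+1}, s; y_{j+1}, t)}(r) - \tilde\gamma_{(x_j, s; y_j, t)}(r) \geq \tfrac{1}{n}\Big\},
\end{align*}
a countable combination of measurable events. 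Since $U^c$ is null, completeness of $\FF$ (Remark~\ref{complete_remark}) lifts measurability of $E_F \cap U$ to measurability of $E_F$ itself.

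The principal technical obstacle is verifying measurability of the pointwise leftmost maximizer $\tilde\gamma_{(x, s; y, t)}(r)$, which rests on reducing an uncountable supremum over $z \in \R$ to a countable one over $\Q$ via continuity of $\LL$ in $z$ and the uniform boundedness of maximizers furnished by Remark~\hyperref[proper_remark_c]{\ref*{proper_remark}\ref*{proper_remark_c}}. Once this is secured, the remainder amounts to careful bookkeeping with countable unions and intersections, together with completeness of $\FF$ to absorb the null event $U^c$.
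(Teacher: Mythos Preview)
Your proposal is correct and follows the same overall architecture as the paper: decompose $\{\MDG_{s,t}^C(A,B)\geq k\}$ as a countable union over $k$-tuples of endpoints in $(A\cap C)\times(B\cap C)$, pass to the almost sure event on which all the relevant geodesics are unique, invoke Lemma~\ref{geodesic_construction} to characterize geodesics via maximizers of $\LL(x,s;z,r)+\LL(z,r;y,t)$, and use completeness of $\FF$ to absorb the null complement.

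The difference lies in how pairwise disjointness is made measurable. You construct a measurable surrogate $\tilde\gamma_{(x,s;y,t)}(r)$ for each fixed $r$ (the leftmost maximizer), verify its measurability via $\{\tilde\gamma\leq a\}=\{\inf_{z\in\Q\cap(-\infty,a]}\Psi(z,\cdot)=0\}$, and then express disjointness as a uniform gap over rational times. The paper instead proves a standalone Lemma~\ref{NI_measurable}: on the uniqueness event, two geodesics fail to be disjoint precisely when there exists $(z,r)$ with $\LL(x_i,s;z,r)+\LL(z,r;y_i,t)=\LL(x_i,s;y_i,t)$ for both $i$, and this existence is rewritten directly as a countable combination of events involving $\LL$ at rational $(z,r)$. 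The paper's route is shorter because it never needs to build a measurable version of the geodesic path itself; your route is slightly more constructive and would be reusable if one later wanted measurability of $\omega\mapsto\gamma_u(r)$ for other purposes. One small point: your invocation of Definition~\hyperref[proper_def_3]{\ref*{proper_def}\ref*{proper_def_3}}--\hyperref[proper_def_4]{\ref*{proper_def}\ref*{proper_def_4}} to guarantee existence and boundedness of the leftmost maximizer only holds on $\PP$, so strictly speaking you should work on $\PP\cap U$ rather than $U$; this is harmless since $\PP$ is also almost sure and completeness handles the rest.
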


Since the proof of Proposition~\ref{disjoint_measurable} will need to consider certain events involving geodesics, it will be useful to have the following description of a geodesic.

\begin{lemma}
\label{geodesic_construction}
\textup{\cite[proof of Lemma 13.2]{dauvergne-ortmann-virag?}}
On the almost sure event $\PP$ of Theorem~\ref{geodesics_converge}, the following is true for every $u\in\R^4_\uparrow$.
If $G_u$ consists of a single element $\gamma_u$, then for every $r\in(s,t)$, there is a unique $z_r\in\R$ satisfying
\eq{ 
\LL(x,s;z_r,r)+\LL(z_r,r;y,t) = \sup_{z\in\R}\, [\LL(x,s;z,r)+\LL(z,r;y,t)] = \LL(x,s;y,t),
}
and $\gamma_u(r) = z_r$.
\end{lemma}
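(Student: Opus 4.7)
The plan is to combine Lemma~\ref{subgeodesic_lemma} with a concatenation argument to show that every maximizer $z$ of the sup in \eqref{max_prop} must equal $\gamma_u(r)$. First I would use Lemma~\ref{subgeodesic_lemma}\ref{subgeodesic_lemma_b}, applied to $\gamma_u$ with the partition $s<r<t$, to conclude that $\gamma_u(r)$ is itself a maximizer: the identity \eqref{any_subdivision} directly gives
\[
\LL(u) \;=\; \LL(\gamma_u) \;=\; \LL(x,s;\gamma_u(r),r) + \LL(\gamma_u(r),r;y,t).
\]
Combined with \eqref{max_prop}, this places $\gamma_u(r)$ among the maximizers and identifies the value of the sup with $\LL(u)$, which simultaneously verifies existence of a $z_r$.

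For uniqueness, I would take an arbitrary maximizer $z\in\R$ and build a geodesic from $(x,s)$ to $(y,t)$ that passes through $(z,r)$; since $|G_u|=1$, this geodesic must coincide with $\gamma_u$, forcing $z = \gamma_u(r)$. The event $\PP$ is contained in the existence event $\EE$ of \eqref{existence_event}, so both $G_{(x,s;z,r)}$ and $G_{(z,r;y,t)}$ are nonempty; pick any $\gamma_1$ and $\gamma_2$ in these sets and form the continuous concatenation $\gamma = \gamma_1 \oplus \gamma_2$. By Lemma~\ref{subgeodesic_lemma}\ref{subgeodesic_lemma_a},
\[
\LL(\gamma) \;=\; \LL(\gamma_1) + \LL(\gamma_2) \;=\; \LL(x,s;z,r) + \LL(z,r;y,t) \;=\; \LL(u),
\]
where the last equality uses that $z$ is a maximizer. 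Hence $\gamma \in G_u$, so $\gamma = \gamma_u$ by the uniqueness hypothesis, and reading off the value at time $r$ yields $z = \gamma(r) = \gamma_u(r)$.

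There is no real obstacle here: the argument is essentially an unpacking of definitions together with the two halves of Lemma~\ref{subgeodesic_lemma}, once one knows that geodesics exist between arbitrary space-time pairs. The only point worth checking carefully is that $\gamma_1 \oplus \gamma_2$ is itself a continuous path, which follows immediately from $\gamma_1(r) = z = \gamma_2(r)$ and the definition of $\oplus$; the length bookkeeping then reduces to the concatenation identity \eqref{concatenation}.
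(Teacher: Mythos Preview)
Your argument is correct and is essentially the natural one: the paper does not supply its own proof here but defers to \cite[proof of Lemma~13.2]{dauvergne-ortmann-virag?}, and the reasoning indicated there (and echoed in Remark~\ref{proper_remark}\ref{proper_remark_c}) is exactly what you wrote---use Lemma~\ref{subgeodesic_lemma}\ref{subgeodesic_lemma_b} to see that $\gamma_u(r)$ is a maximizer, then concatenate geodesics through any other maximizer to contradict uniqueness of $\gamma_u$. Nothing further is needed.
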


We use the above characterization of $\gamma_u$ to prove the next lemma, which constitutes the bulk of the work toward Proposition~\ref{disjoint_measurable}.
Let $\NI_{s,t}(x_1,x_2;y_1,y_2)$ denote the event that every $\gamma_1\in G_{(x_1,s;y_1;t)}$ is disjoint from every $\gamma_2\in G_{(x_2,s;y_2,t)}$.

\begin{lemma} \label{NI_measurable}
For any $x_1,x_2,y_1,y_2\in\R$ and $s<t$, the event $\NI_{s,t}(x_1,x_2;y_1,y_2)$ is measurable.
\end{lemma}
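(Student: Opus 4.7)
The plan is to reduce $\NI^c$ to an event determined by countably many evaluations of $\LL$.

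\textbf{Step 1 (characterization).} On the almost sure event $\EE$ from \eqref{existence_event}, we claim that for each $i\in\{1,2\}$, $z\in\R$, and $r\in(s,t)$,
\[
\text{some } \gamma_i \in G_{u_i} \text{ satisfies } \gamma_i(r) = z \iff \LL(x_i,s;z,r) + \LL(z,r;y_i,t) = \LL(x_i,s;y_i,t),
\]
where $u_i = (x_i,s;y_i,t)$. The forward direction applies Lemma~\hyperref[subgeodesic_lemma_b]{\ref*{subgeodesic_lemma}\ref*{subgeodesic_lemma_b}} to $\gamma_i$; the reverse concatenates geodesics $(x_i,s)\to(z,r)$ and $(z,r)\to(y_i,t)$, which exist on $\EE$, and invokes Lemma~\hyperref[subgeodesic_lemma_a]{\ref*{subgeodesic_lemma}\ref*{subgeodesic_lemma_a}}. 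With this in hand, set
\[
g(z,r) := \sum_{i=1}^{2}\bigl[\LL(x_i,s;y_i,t) - \LL(x_i,s;z,r) - \LL(z,r;y_i,t)\bigr] \geq 0,
\]
so that $g(z,r)=0$ precisely when a geodesic in $G_{u_1}$ and a geodesic in $G_{u_2}$ both pass through $(z,r)$. Because coincidences at $r=s$ and $r=t$ reduce to the deterministic conditions $x_1=x_2$ and $y_1=y_2$ respectively,
\[
\NI_{s,t}(x_1,x_2;y_1,y_2)^c \cap \EE = \EE \cap \bigl(\{x_1=x_2\}\cup\{y_1=y_2\}\cup\{\exists\,(z,r)\in\R\times(s,t):g(z,r)=0\}\bigr).
\]

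\textbf{Step 2 (countable decomposition).} For $R,m\in\N$ with $m>2/(t-s)$, set $K_{R,m}:=[-R,R]\times[s+1/m,t-1/m]$. Since $\bigcup_{R,m}K_{R,m}=\R\times(s,t)$,
\[
\{\exists\,(z,r)\in\R\times(s,t):g(z,r)=0\} = \bigcup_{R,m}\bigl\{\inf\nolimits_{K_{R,m}}g=0\bigr\}.
\]
By Definition~\hyperref[proper_def_1]{\ref*{proper_def}\ref*{proper_def_1}}, $g$ is continuous on the compact $K_{R,m}$, so the density of $\Q^2\cap K_{R,m}$ yields
\[
\bigl\{\inf\nolimits_{K_{R,m}}g=0\bigr\} = \bigcap_{n\geq1}\bigcup_{(z,r)\in K_{R,m}\cap\Q^2}\{g(z,r)<1/n\},
\]
a countable combination of events each determined by finitely many evaluations of $\LL$, hence measurable.

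\textbf{Step 3 (conclude).} Combining Steps 1 and 2, $\NI^c$ differs from a measurable event by at most a subset of the null event $\EE^c$. Completeness of $\FF$ (Remark~\ref{complete_remark}) then yields $\NI^c\in\FF$, so $\NI_{s,t}(x_1,x_2;y_1,y_2)\in\FF$.

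\textbf{Main obstacle.} The substantive step is the characterization in Step 1, which converts the geometric statement about disjointness of geodesics into a purely analytic condition on $\LL$; the rest is a routine countable-approximation argument. Using the compact rectangles $K_{R,m}$ rather than the full strip $\R\times(s,t)$ avoids any need to control approximate zeros of $g$ near the boundary $\{s,t\}\cup\{\pm\infty\}$.
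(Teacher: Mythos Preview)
Your proof is correct and follows essentially the same strategy as the paper: characterize $\NI^c$ by an analytic condition on $\LL$ (existence of a common ``passage point'' $(z,r)$), then use continuity of $\LL$ on compact rectangles $K_{R,m}$ to reduce to countably many rational evaluations, and finally invoke completeness of $\FF$ to handle the residual null set.

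The one noteworthy difference is in the characterization step. The paper restricts to the almost sure event $\{|G_{u_1}|=1\}\cap\{|G_{u_2}|=1\}\cap\PP$ and invokes Lemma~\ref{geodesic_construction} to identify the unique geodesic with the maximizers in \eqref{max_prop}. You instead work on the larger event $\EE$ and use the two-sided equivalence ``some $\gamma_i\in G_{u_i}$ passes through $(z,r)$ iff $\LL(x_i,s;z,r)+\LL(z,r;y_i,t)=\LL(u_i)$'', which you obtain directly from Lemma~\ref{subgeodesic_lemma} and concatenation. This is slightly more elementary (it avoids Lemma~\ref{geodesic_construction} and the uniqueness event altogether) and handles the definition of $\NI$---which quantifies over \emph{all} geodesics---without first collapsing to a unique one. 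You also make the endpoint cases $r\in\{s,t\}$ explicit via the deterministic conditions $x_1=x_2$ and $y_1=y_2$, which the paper leaves implicit. Both routes arrive at the same countable decomposition, so the difference is cosmetic rather than substantive.
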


\begin{proof}
Recall that almost surely, both $G_{(x_1,s;y_1;t)}$ and $G_{(x_2,s;y_2,t)}$ are singletons, and $\PP$ occurs.
Because we have assumed in Remark~\ref{complete_remark} that $\FF$ is complete, it suffices to show that the intersection of $\NI_{s,t}(x_1,x_2;y_1,y_2)$ with these three almost sure events is measurable.
So let us assume henceforth that $\gamma_1\in G_{(x_1,s;y_1;t)}$ and $\gamma_2 \in G_{(x_2,s;y_2;t)}$ are unique, and that $\PP$ occurs; thus $\gamma_1$ and $\gamma_2$ are as described in Lemma~\ref{geodesic_construction}.
In this case,
\eq{
\NI_{s,t}(x_1,x_2;y_1,y_2) = \{\gamma_1,\gamma_2\text{ disjoint}\},
}
and so it suffices to show the measurability of $\Omega\setminus\{\gamma_{1},\gamma_{2}\text{ disjoint}\}$.
Indeed, by Lemma~\ref{geodesic_construction}, $\gamma_1$ and $\gamma_2$ fail to be disjoint precisely when there exists $(z,r)\in\R\times(s,t)$ such that simultaneously
\eq{
\LL(x_1,s;z,r) + \LL(z,r;y_1,t) = \LL(x_1,s;y_1,t) \quad \text{and} \quad \LL(x_2,s;z,r) + \LL(z,r;y_2,t) = \LL(x_2,s;y_2,t).
}
Since $\LL$ is continuous, the existence of such $(z,r)$ can be determined by the values of $\LL$ on a countable set.
To be completely precise, $\Omega\setminus\{\gamma_{1},\gamma_{2}\text{ disjoint}\}$ is equivalent to
\eq{
\bigcup_{R=1}^\infty \bigcap_{m=1}^\infty \bigcup_{(z,r)\in\Q^2\cap([-R,R]\times[s+R^{-1},t-R^{-1}])}\bigcup_{i\in\{1,2\}}\{\LL(x_i,s;z,r) + \LL(z,r;y_i,t) + 1/m > \LL(x_i,s;y_i,t)\}.
}
Therefore, this event is measurable.
\end{proof}

\begin{proof}[Proof of Proposition~\ref{disjoint_measurable}]
We wish to show that, for any $k\in\{1,2,\dots\}$, the disjointness event
$\{\MDG_{s,t}^C(A,B)\geq k\}$
belongs to the sigma-algebra $\FF$.
First note that the countability of $C$ implies that the event $\bigcap_{p,q\in C} \{|G_{(p,s;q,t)}|=1\}$ occurs with probability one.
Therefore, by Remark~\ref{complete_remark}, it suffices to show that
\eq{
\{\MDG_{s,t}^C(A,B)\geq k\} \cap \bigcap_{p,q\in C} \{|G_{(p,s;q,t)}|=1\} \cap \PP \in \FF.
}
On the almost sure event $\bigcap_{p,q\in C} \{|G_{(p,s;q,t)}|=1\} \cap \PP$, 
the set under consideration can be expressed as
\eq{
\{\MDG_{s,t}^C(A,B)\geq k\} = \bigcup_{\substack{p_1,\dots,p_k\in A\cap C \\ q_1,\dots,q_k\in B\cap C}}
\bigcap_{1 \leq i < j \leq k} \NI_{s,t}(p_i,p_j;q_i,q_j). 
}
As the union and the intersection in this display take place over countable index sets, Lemma~\ref{NI_measurable} completes the proof.
\end{proof}


Recall the definition of an $n$-polymer from Section~\ref{prelimiting_model}.
For intervals $I,J\subset\R$, denote by $\MDP_n(I,J)$ the maximum size of a collection of disjoint $n$-polymers having endpoints of the form $(x,0)$ and $(y,1)$ with $x\in I$ and $y\in J$.
The following result of \cite{hammond20} will naturally translate into Theorem~\ref{disjoint_geo_rarity}, since we know from Section~\ref{polymer_convergence} that $n$-polymers converge to (the graph of) geodesics.

\begin{thm}
\label{disjoint_poly_rarity}
\textup{\cite[Thm.~1.1]{hammond20}}
There exists a positive constant $G$ such that the following holds.
For any $\delta>0$, integers $k$ and $n$, and $z,w\in\R$ satisfying
\eeq{ \label{disjoint_poly_rarity_conditions}
k\geq2, \quad \delta \leq G^{-4k^2}, \quad n\geq G^{k^2}(1+|z-w|^{36})\delta^{-G}, \quad |z-w| \leq \delta^{-1/2}(\log \delta^{-1})^{-2/3}G^{-k},
}
we have
\eq{
\P\Big(\MDP_n([z-\delta,z+\delta],[w-\delta,w+\delta])\geq k\Big)
\leq G^{k^3}\exp\big\{G^k(\log\delta^{-1})^{5/6}\big\}\delta^{(k^2-1)/2}.
}
\end{thm}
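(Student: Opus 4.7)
The plan is to translate the event $\{\MDP_n([z-\delta,z+\delta],[w-\delta,w+\delta])\geq k\}$ into a statement about the non-intersecting Brownian line ensemble $\mathcal{X}^n = (X_1^n, X_2^n, \ldots)$ associated to Brownian LPP by the RSK correspondence (O'Connell--Yor, Baryshnikov, Gravner--Tracy--Widom). In that picture, the maximum total weight of $k$ mutually disjoint polymers between ordered endpoint $k$-tuples equals a sum of increments of the top $k$ curves of $\mathcal{X}^n$; correspondingly, the \emph{existence} of $k$ disjoint $n$-polymers with endpoints in small windows can be encoded by an event that the top $k$ curves pass through two prescribed space-time windows. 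After the standard scaling ($n^{1/3}$ in weight, $n^{2/3}$ in space), this ensemble converges to the parabolic Airy line ensemble, which enjoys the Brownian Gibbs property established by Corwin and Hammond; this property, and its prelimiting analogue, is the central tool.

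Under the hypotheses \eqref{disjoint_poly_rarity_conditions}, I would use Brownian Gibbs resampling to compare the top $k$ curves on the relevant time window to $k$ non-intersecting Brownian bridges whose endpoints are free to lie in the two $\delta$-windows. A Karlin--McGregor determinantal formula then controls the probability: each of the $k$ curves contributes a factor $\delta^2$ from its two endpoint constraints, while the non-intersection conditioning produces a Vandermonde-type factor $\prod_{i<j}(p_j-p_i)(q_j-q_i)$ that absorbs, after integration against the free-bridge Gaussian densities, roughly $k^2$ powers of $\delta$ with an overall $\delta^{(k^2-1)/2}$ gain once the parabolic shift (giving the $(x-y)^2/(t-s)$ drift correction) is accounted for. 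Prelimiting corrections---principally that the lattice spacing $n^{-1/3}$ be much smaller than $\delta$ and that the parabolic approximation be uniform---are exactly what the conditions on $n$, on $\delta$, and on $|z-w|$ guarantee, and the residual error is absorbed into the sub-exponential factor $G^{k^3}\exp\{G^k (\log\delta^{-1})^{5/6}\}$ via a discretization of the windows into sub-intervals of size a small power of $\log \delta^{-1}$.

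The main obstacle, and the reason this is a deep theorem of \cite{hammond20}, is executing the above scheme \emph{uniformly in $k$}. The normalization of the non-intersecting-bridge measure (the reciprocal of the Brownian Gibbs partition function) grows like $e^{c k^2}$, so in order to preserve the leading factor $\delta^{(k^2-1)/2}$ against this cost one needs sharp upper- and lower-tail bounds on the endpoint of the $k$-th curve of the prelimiting line ensemble---bounds that scale roughly like $e^{-k^3}$ in the relevant regime, far beyond what one-point Tracy--Widom asymptotics deliver. Establishing these tail bounds requires an inductive ``jump ensemble'' construction for the line ensemble together with a multi-scale decomposition of the two $\delta$-windows, and it is the combination of this inductive scheme with the Brownian Gibbs resampling that is the heart of the proof. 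The parameter thresholds in \eqref{disjoint_poly_rarity_conditions} are calibrated precisely so that every step of this scheme---parabolic approximation, Gaussian resampling, lattice smoothing, and $k$-uniform tails---applies simultaneously.
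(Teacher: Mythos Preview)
The paper does not prove this theorem. Theorem~\ref{disjoint_poly_rarity} is stated as \cite[Thm.~1.1]{hammond20} and invoked as a black-box input; immediately after its statement the paper proceeds to the proof of Theorem~\ref{disjoint_geo_rarity}, which \emph{applies} Theorem~\ref{disjoint_poly_rarity} together with Corollary~\ref{eventual_disjointness} to pass the prelimiting estimate to the directed landscape. There is therefore no proof in this paper to compare your proposal against.

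As for the substance of your sketch relative to the actual argument in \cite{hammond20}: you have the right high-level ingredients---Brownian Gibbs resampling, the jump ensemble, and $k$-uniform tail bounds---but the mechanism by which disjointness is encoded is not quite as you describe. The RSK/O'Connell--Yor correspondence does not translate ``$k$ disjoint polymers with endpoints in two $\delta$-windows'' into ``the top $k$ curves pass through two prescribed windows.'' Rather, the multi-polymer weight equals a difference of sums of line-ensemble curve values, and the existence of $k$ disjoint polymers forces the gap between the $k$-th and $(k{+}1)$-st curves to be small at certain points; it is the rarity of these small gaps, controlled via Brownian Gibbs, that drives the $\delta^{(k^2-1)/2}$ exponent. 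Your Karlin--McGregor/Vandermonde heuristic for the exponent is morally in the right neighborhood but is not how the bound is actually obtained. The remainder of your outline (parameter calibration, sub-exponential error absorption, the $k$-uniform difficulty) is accurate in spirit.
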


\begin{proof}[Proof of Theorem~\ref{disjoint_geo_rarity}]
Let $C\subset\R$ be countable.
Fix $k$, $\eps>0$, and $u=(x,s;y,t)\in\R^4_\uparrow$ satisfying \eqref{disjoint_geo_rarity_conditions}.
Then \eqref{disjoint_poly_rarity_conditions} is satisfied with $\delta = \eps/(t-s)^{2/3}$, $z=x/(t-s)^{2/3}$, $w=y/(t-s)^{2/3}$, and $n$ sufficiently large, so that we will ultimately be able to invoke Theorem~\ref{disjoint_poly_rarity}.
By the scaling in \eqref{KPZ_scaling}, 
we have
\eeq{ \label{rescale}
&\P\Big(\MDG_{s,t}^C([x-\eps,x+\eps],[y-\eps,y+\eps])\geq k\Big) \\
&= \P\Big(\MDG_{0,1}^{(t-s)^{-2/3}C}([z-\delta,z+\delta],[w-\delta,w+\delta])\geq k\Big).
}
Now assume of the coupling of Theorem~\ref{blpp_airy}, and suppose as in Theorem~\ref{geodesics_converge}, the almost sure occurrence of $\PP$ and of the event $\{|G_u|=1\}$ for every $u = (z,0;w,1)$ with $z,w \in (t-s)^{-2/3}C$.
If
\eq{
\MDG_{0,1}^{(t-s)^{-2/3}C}([z-\delta,z+\delta],[w-\delta,w+\delta])\geq k,
}
then there are $u_i = (z_i,0;w_i,1)$, $i=1,\dots,k$, such that 
\eq{
z_i \in [z-\delta,z+\delta]\cap (t-s)^{-2/3}C, \qquad w_i \in [w-\delta,w+\delta]\cap (t-s)^{-2/3}C,
}
admitting disjoint and unique geodesics $\gamma_{u_1},\dots,\gamma_{u_k} : [0,1]\to\R$.
By Corollary~\ref{eventual_disjointness}, if we select some $R_n(\vphi_n^{(i)}) \in P_{n,u_i}$ for each $n$, then the polymers $R_n(\vphi_n^{(1)}),\dots,R_n(\vphi_n^{(k)})$ must be disjoint for all $n$ sufficiently large.
We have thus argued that, on the almost sure event 
\eq{
\PP \cap \bigcap_{z,w\in(t-s)^{-2/3}C} \{|G_{(z,0;w,1)}| = 1\},
} 
we have
\eq{
&\Big\{\MDG_{0,1}^{(t-s)^{-2/3}C}([z-\delta,z+\delta],[w-\delta,w+\delta])\geq k\Big\}  \\
&\subset \bigcup_{N=1}^\infty \bigcap_{n=N}^\infty \{\MDP_n([z-\delta,z+\delta],[w-\delta,w+\delta])\geq k\}.
}
This containment gives the first inequality in the following chain:
\eq{
&\P\Big(\MDG_{s,t}^C([x-\eps,x+\eps],[y-\eps,y+\eps])\geq k\Big) \\
&\stackref{rescale}{=}\P\Big(\MDG_{0,1}^{(t-s)^{-2/3}C}([z-\delta,z+\delta],[w-\delta,w+\delta])\geq k\Big) \\
&\stackrefp{rescale}{\leq} \P\bigg(\bigcup_{N=1}^\infty \bigcap_{n=N}^\infty \{\MDP_n([z-\delta,z+\delta],[w-\delta,w+\delta])\geq k\}\bigg) \\
&\stackrefp{rescale}{=} \lim_{N\to\infty} \P\bigg(\bigcap_{n=N}^\infty \{\MDP_n([z-\delta,z+\delta],[w-\delta,w+\delta])\geq k\}\bigg) \\
&\stackrefp{rescale}{\leq}\liminf_{n\to\infty} \P\Big(\MDP_n([z-\delta,z+\delta],[w-\delta,w+\delta])\geq k\Big) \\
&\stackrefp{rescale}{\leq} G^{k^3}\exp\big\{G^k(\log\delta^{-1})^{5/6}\big\}\delta^{(k^2-1)/2},
}
where we have used Theorem~\ref{disjoint_poly_rarity} to obtain the final inequality.
\end{proof}

We now prove that in the case $k=2$, one can take $C=\R$.

\begin{proof}[Proof of Corollary~\ref{disjoint_2geo_rarity}]
Assume the occurrence of the geodesic existence and ordering events $\EE$ and $\OO$ from \eqref{existence_event} and \eqref{global_ordering_event}.
We will argue that on $\EE\cap\OO$, we have the following equality of events for any $u=(x,s;y,t)\in\R^4_\uparrow$ and $\eps>0$:
\eq{
&\{\MDG^\R_{s,t}((x-\eps,x+\eps),(y-\eps,y+\eps))\geq2\} \\
&= \{\MDG^\Q_{s,t}((x-\eps,x+\eps),(y-\eps,y+\eps)) \geq 2\}.
}
Since the first event is clearly implied by the second, we need only prove the reverse containment.
Once this is done, we will have shown (i) that the first event is measurable, as the second is measurable by Proposition~\ref{disjoint_measurable}; and (ii) that whenever $\eps>0$ satisfies \eqref{disjoint_2geo_rarity_conditions}, the first event adheres to the estimate \eqref{disjoint_2geo_rarity_ineq}, since the second event is contained in $\{\MDG^\Q_{s,t}([x-\eps,x+\eps],[y-\eps,y+\eps]) \geq 2\}$, which in turn adheres to \eqref{disjoint_geo_rarity_ineq}.

So suppose
$\MDG_{s,t}^\R((x-\eps,x+\eps),(y-\eps,y+\eps)) \geq 2$.
That is, there are $x_1<x_2$ in $(x-\eps,x+\eps)$ and $y_1 < y_2$ in $(y-\eps,y+\eps)$ admitting geodesics $\gamma_1\in G_{(x_1,s;y_1,t)}$ and $\gamma_2\in G_{(x_2,s;y_2,t)}$ that satisfy
\eeq{ \label{1_less_2}
\gamma_1(r) < \gamma_2(r) \quad \text{for all $r\in[s,t]$}.
}
Then select any rationals
\eq{
&p_1\in\Q\cap(x-\eps,x_1),\quad p_2\in\Q\cap(x_2,x+\eps), \\
&q_1\in\Q\cap(y-\eps,y_1), \quad q_2\in\Q\cap(y_2,y+\eps),
}
and any $\wt\gamma_1\in G_{(p_1,s;q_1,t)}$ and $\wt\gamma_2\in G_{(p_2,s;q_2,t)}$.
By geodesic ordering, we have
\eq{
\wt\gamma_1(r) \leq \gamma_1(r) \quad \text{and} \quad \gamma_2(r)\leq\wt\gamma_2(r) \quad \text{for all $r\in[s,t]$}.
}
In light of \eqref{1_less_2}, this implies that $\wt\gamma_1$ and $\wt\gamma_2$ are disjoint, meaning
\eq{
\MDG_{s,t}^\Q((x-\eps,x+\eps),(y-\eps,y+\eps)) \geq 2.
}
\end{proof}

\subsection{Geodesics in a common compact set cannot be arbitrarily close} \label{no_arbitrary_closeness_proof}

The proof of a final input remains.

\begin{proof}[Proof of Theorem~\ref{no_arbitrary_closeness}]
Let $K$ be a given compact subset of $\R^4_\uparrow$.
For each $\eps>0$, define the event
\eeq{ \label{B_eps_def}
\BB_\eps \coloneqq \bigg\{\parbox[c][][c]{2.2in}{\centering$\exists\, u_1 = (x_1,s;w_1,t)\in K, \gamma_1\in G_{u_1}$ \\ $\exists\, u_2 = (x_2,s;w_2,t) \in K, \gamma_2\in G_{u_2}$} : 0<\gamma_2(r)-\gamma_1(r)<\eps\ \ \forall\, r\in[s,t]\bigg\}.
}
We wish to show $\P\big(\PP\cap\bigcap_{\eps>0} \BB_\eps\big) = 0$.
Recall the random number $R>0$ from Remark~\hyperref[proper_remark_c]{\ref*{proper_remark}\ref*{proper_remark_c}}.
By possibly replacing $R$ with a larger deterministic number, we may assume that $K\subset[-R,R]^4$.
If we can show that for any integer $m$, the event $\{R\leq m\}\cap\bigcap_{\eps>0}\BB_\eps$ is contained in a probability zero event, then measurability will be implied by Remark~\ref{complete_remark}, and
\eq{
\P\Big(\PP\cap\bigcap_{\eps>0} \BB_\eps\Big) 
= \P\bigg(\PP\cap\bigcup_{m=1}^\infty \Big(\{R\leq m\}\cap\bigcap_{\eps>0}\BB_\eps\Big)\bigg) 
&\leq \sum_{m=1}^\infty \P\Big(\{R\leq m\} \cap \bigcap_{\eps>0} \BB_\eps\Big) = 0.
}

So let us assume $R\leq m$. 
By compactness of $K$, there is a deterministic number $\delta>0$ such that whenever $u = (x,s;y,t)\in K$, we have $t-s\geq6\delta$.
Therefore, if $\gamma_1$ and $\gamma_2$ are as in \eqref{B_eps_def}, then there is some
$r \in [-m,m]\cap 3\delta\Z$ satisfying $[r,r+3\delta]\subset[s,t]$.
Furthermore, there are $x,y,z,w\in [-m,m]\cap \eps\Z$ such that
\eq{
\gamma_1(r),\gamma_2(r) &\in(x-\eps,x+\eps), & \gamma_1(r+\delta),\gamma_2(r+\delta) &\in(y-\eps,y+\eps), \\
 \gamma_1(r+2\delta),\gamma_2(r+2\delta) &\in(z-\eps,z+\eps), &
  \gamma_1(r+3\delta),\gamma_2(r+3\delta) &\in(w-\eps,w+\eps).
}
Since subpaths of geodesics are again geodesics by Lemma~\hyperref[subgeodesic_lemma_b]{\ref*{subgeodesic_lemma}\ref*{subgeodesic_lemma_b}}, the disjointness between $\gamma_1$ and $\gamma_2$ now implies
\eq{
\MDG^\R_{r,r+\delta}((x-\eps,x+\eps),(y-\eps,y+\eps)) \geq 2, \\
\MDG^\R_{r+\delta,r+2\delta}((y-\eps,y+\eps),(z-\eps,z+\eps)) \geq 2, \\
\MDG^\R_{r+2\delta,r+3\delta}((z-\eps,z+\eps),(w-\eps,w+\eps)) \geq 2.
}
See Figure~\ref{closeness} for an illustration.
\begin{figure}
\centering
\includegraphics[trim=0.5in 0in 0.5in 0in, clip, width=0.5\textwidth]{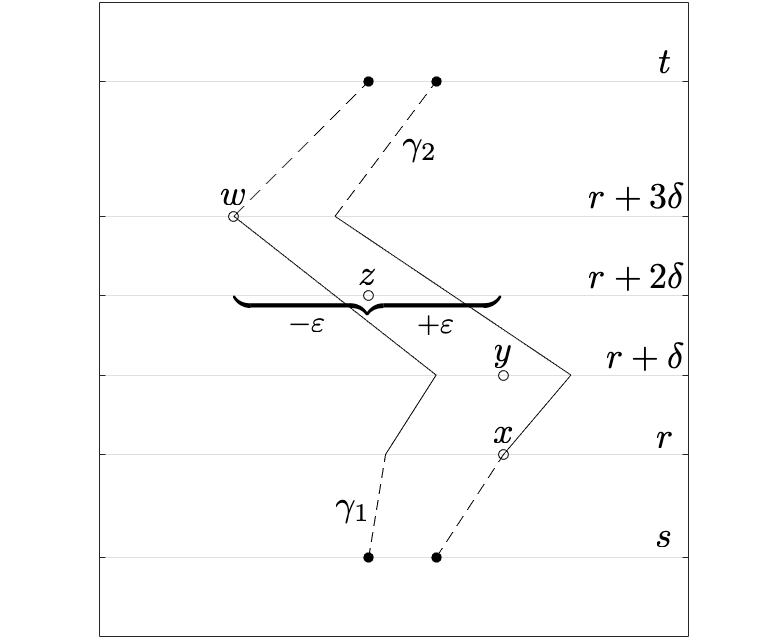}
\caption{Scenario implied by $\BB_\eps$.
Because $s,t\in[-m,m]$ satisfy $t-s \geq 6\delta$, there is some $r\in[-m,m]\cap3\delta\Z$ for which $[r,r+3\delta]$ is a subinterval of $[s,t]$.
This subinterval is divided into three further subintervals, each of which admits the two disjoint subgeodesics arising from $\gamma_1$ and $\gamma_2$. 
(Here, three is the minimum number needed for our analysis to ensure probability $0$, but it could be replaced by any larger integer.)
The points $x,y,z,w\in[-m,m]\cap\eps\Z$ are chosen so that intervals of radius $\eps$ about these points contain both $\gamma_1$ and $\gamma_2$ at the associated times.}
\label{closeness}
\end{figure}
Notice that the three random variables appearing above are independent and identically distributed, since the time intervals $(r,r+\delta)$, $(r+\delta,r+2\delta)$, and $(r+2\delta,r+3\delta)$ are disjoint and have the same length.
Consequently, if we define $\BB_{\eps,x,y,z,w,r}$ to be the intersection of the three events in the previous display, then
\eeq{ \label{local_bad_event}
\P(\BB_{\eps,x,y,z,w,r}) = \P\Big(\MDG^\R_{r,r+\delta}((x-\eps,x+\eps),(y-\eps,y+\eps)) \geq 2\Big)^3,
}
and our discussion has shown
\eeq{ \label{union_bad_event}
\{R\leq m\} \cap \bigcap_{\eps>0}\BB_\eps \subset \bigcap_{k=1}^\infty\, \bigcup_{r\in[-m,m]\cap3\delta\Z}\,\bigcup_{x,y,z,w\in[-m,m]\cap k^{-1} \Z}\BB_{k^{-1},x,y,z,w,r}.
}
Now recall the constant $G$ from Theorem~\ref{disjoint_geo_rarity}.
Since we will soon take $\eps\searrow0$, we may assume $\eps$ to be sufficiently small that the following technical conditions are satisfied:
\eq{
\frac{\eps}{\delta^{2/3}} \leq G^{-16}, \quad 
 \frac{2m}{\delta^{2/3}} < \frac{\eps^{-1/2}}{\delta^{-1/3}}\Big(\log \frac{\delta^{2/3}}{\eps}\Big)^{-2/3}G^{-2}, 
 \quad G^8\exp\Big\{G^2\Big(\log\frac{\delta^{2/3}}{2\eps}\Big)^{5/6}\Big\}\frac{(2\eps)^{3/2}}{\delta} \leq \eps^{11/8}.
}
We can then apply Corollary~\ref{disjoint_2geo_rarity} to obtain
\eeq{ \label{prob_bad_event}
\P\Big(\MDG^\R_{r,r+\delta}((x-\eps,x+\eps),(y-\eps,y+\eps)) \geq 2\Big) \leq 
\eps^{11/8}.
}
Putting together \eqref{local_bad_event}--\eqref{prob_bad_event} now yields the desired result:
\eq{
\P\Big(\{R\leq m\} \cap \bigcap_{\eps>0}\BB_\eps\Big) 
&\leq\P\Big(\bigcap_{k=1}^\infty\,\bigcup_{r\in[-m,m]\cap3\delta\Z}\,\bigcup_{x,y,z,w\in[-m,m]\cap k^{-1} \Z}\BB_{k^{-1},x,y,z,w,r}\Big)  \\
&\leq \limsup_{k\to\infty}\, \P\Big(\bigcup_{r\in[-m,m]\cap3\delta\Z}\,\bigcup_{x,y,z,w\in[-m,m]\cap k^{-1} \Z}\BB_{k^{-1},x,y,z,w,r}\Big) \\
&\leq \limsup_{k\to\infty} \sum_{r\in[-m,m]\cap3\delta\Z}\, \sum_{x,y,z,w\in[-m,m]\cap k^{-1}\Z} k^{-33/8} \\
&\leq \limsup_{k\to\infty}\ (2m(3\delta)^{-1}+2)(2mk+1)^4k^{-33/8} = 0. \qedhere
}
\end{proof}

\section{Proofs of Theorems~\hyperref[main_thm_a]{\ref*{main_thm}\ref*{main_thm_a}} and \hyperref[main_thm_2_a]{\ref*{main_thm_2}\ref*{main_thm_2_a}}} \label{lower_bound}
In this section, we prove Theorems~\hyperref[main_thm_a]{\ref*{main_thm}\ref*{main_thm_a}} and \hyperref[main_thm_2_a]{\ref*{main_thm_2}\ref*{main_thm_2_a}}, which are restated in Proposition~\ref{containment_thm}.
In fact, Proposition~\ref{containment_thm} is 
a stronger result since we consider the relevant sets for all time horizons simultaneously.
In the ``univariate" case of Theorem~\ref{main_thm}, this means we fix the initial time at $0$ but vary the terminal time $t>0$.
More precisely, we consider the set
\eq{ 
\DD_{x_1,x_2;t} \coloneqq \Big\{y \in \R : \exists\, \gamma_1 \in G_{(x_1,0;y,t)}, \gamma_2 \in G_{(x_2,0;y,t)} \text{ such that } \gamma_1(r)<\gamma_2(r) \text{ for all $r\in(0,t)$}\Big\},
}
as well as the measure $\mu_{x_1,x_2;t}$ on $\R$, defined by 
\begin{subequations}
\label{mu_t_def}
\begin{linenomath}\postdisplaypenalty=0
\begin{align}
\mu_{x_1,x_2;t}([y_1,y_2]) = 
\ZZ_{x_1,x_2;t}(y_2) - \ZZ_{x_1,x_2;t}(y_1),
\end{align}
\end{linenomath}
where
\begin{linenomath}\postdisplaypenalty=0
\begin{align}
\ZZ_{x_1,x_2;t}(y) \coloneqq \LL(x_2,0;y,t)-\LL(x_1,0;y,t).
\end{align}
\end{linenomath}
\end{subequations}
In the ``bivariate" case of Theorem~\ref{main_thm_2}, both the initial time $s$ and terminal time $t$ are allowed to vary.
The exceptional set under consideration is
\eq{
\DD_{s;t} \coloneqq \Big\{(x,y) \in \R^2 : \exists\, \gamma_1,\gamma_2 \in G_{(x,s;y,t)} \text{ such that } \gamma_1(r)<\gamma_2(r) \text{ for all $r\in(s,t)$}\Big\},
}
and the relevant measure on $\R^2$ is $\mu_{s;t}$, defined by
\eeq{ \label{mu_st_def}
\mu_{s;t}([x_1,x_2]\times[y_1,y_2]) = \LL(x_2,s;y_2,t)+\LL(x_1,s;y_1,t)-\LL(x_1,s;y_2,t)-\LL(x_2,s;y_1,t).
}
Recall the event $\PP$ from Definition \ref{proper_def}, and the geodesic ordering events $\OO$ and $\OO_x$ from \eqref{ordering_events}.

\begin{prop} \label{containment_thm}
The following statements hold. 
\begin{enumerate}[label=\textup{(\alph*)}]

\item \label{containment_thm_a}
For any $x_1<x_2$, on the almost sure event $\PP\cap\OO_{x_1}\cap\OO_{x_2}$, we have $\Supp(\mu_{x_1,x_2;t}) = \DD_{x_1,x_2;t}$ for all $t>0$.

\item \label{containment_thm_b}
On the almost sure event $\PP\cap\OO$, we have $\Supp(\mu_{s;t}) = \DD_{s;t}$ for all $s<t$.

\end{enumerate}
\end{prop}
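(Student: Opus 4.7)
The plan is to prove part (b) in full and note that (a) follows by the same reasoning with the initial location constrained to be constant (and ordering provided by $\OO_{x_1}\cap\OO_{x_2}$ rather than $\OO$). Both inclusions $\Supp(\mu_{s;t})\subset\DD_{s;t}$ and $\Supp(\mu_{s;t})\supset\DD_{s;t}$ will be established by working on the almost sure event $\PP\cap\OO$, which already guarantees existence of leftmost/rightmost geodesics (via $\EE\supset\PP$ from \eqref{existence_event}) and planar ordering uniformly in $s<t$. To handle all time horizons simultaneously with Theorem~\ref{no_arbitrary_closeness}, I would exhaust $\R^4_\uparrow$ by a countable family of compact sets $K_m$ and extract, on $\PP$, a random scale $\eps_m>0$ for each $K_m$; then every concrete argument below takes place inside some $K_m$ and uses the corresponding $\eps_m$, so no exceptional null set depends on the particular $(s,t)$.

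For the easier inclusion $\Supp(\mu_{s;t})\supset\DD_{s;t}$, suppose $(x,y)\notin\Supp(\mu_{s;t})$. By definition of support and \eqref{mu_st_def} there exist $x^\lt<x<x^\rt$ and $y^\lt<y<y^\rt$ with
\[
\LL(x^\rt,s;y^\rt,t)+\LL(x^\lt,s;y^\lt,t)=\LL(x^\lt,s;y^\rt,t)+\LL(x^\rt,s;y^\lt,t).
\]
Pick any geodesic $\gamma_{\lt\rt}\in G_{(x^\lt,s;y^\rt,t)}$ and $\gamma_{\rt\lt}\in G_{(x^\rt,s;y^\lt,t)}$; by continuity and the ordering of endpoints, they must agree at some $(z,r)$ with $r\in(s,t)$. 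Concatenating their sub-segments at $(z,r)$ gives candidate paths from $(x^\lt,s)$ to $(y^\lt,t)$ and from $(x^\rt,s)$ to $(y^\rt,t)$; the displayed equality together with \eqref{any_subdivision} forces both concatenated paths to be geodesics through $(z,r)$. Now apply Lemma~\ref{reordering_lemma}\ref{reordering_lemma_c} on the event $\OO$: any $\gamma\in G_{(x,s;y,t)}$ can be chosen sandwiched between these two concatenated geodesics, hence must itself pass through $(z,r)$. Thus no two geodesics between $(x,s)$ and $(y,t)$ can be strictly separated on $(s,t)$, i.e.\ $(x,y)\notin\DD_{s;t}$.

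For the harder inclusion $\Supp(\mu_{s;t})\subset\DD_{s;t}$, suppose $(x,y)\notin\DD_{s;t}$; then the leftmost and rightmost geodesics $\gamma^\lt,\gamma^\rt\in G_{(x,s;y,t)}$ necessarily agree at some $(z^*,r^*)$ with $r^*\in(s,t)$. Choose approximating sequences $x_j^\lt\nearrow x$, $x_j^\rt\searrow x$, $y_j^\lt\nearrow y$, $y_j^\rt\searrow y$, and select geodesics $\wt\gamma_j^\lt\in G_{(x_j^\lt,s;y_j^\lt,t)}$ and $\wt\gamma_j^\rt\in G_{(x_j^\rt,s;y_j^\rt,t)}$. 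A limiting-geodesics lemma (which I would establish using compactness of endpoint-bounded geodesics from Remark~\ref{proper_remark}\ref{proper_remark_c}, continuity of $\LL$, and the ordering event $\OO$) shows $\wt\gamma_j^\lt\to\gamma^\lt$ and $\wt\gamma_j^\rt\to\gamma^\rt$ uniformly. Apply Theorem~\ref{no_arbitrary_closeness} to a compact set containing all endpoints: for large $j$, $\wt\gamma_j^\lt$ must intersect $\gamma^\lt$ and $\wt\gamma_j^\rt$ must intersect $\gamma^\rt$. Using $(z^*,r^*)$ as a common meeting place and Lemma~\ref{new_geodesics}, I can swap sub-segments to produce geodesics $\wt\gamma_j^\lt$ and $\wt\gamma_j^\rt$ that also agree at a common point; by the concatenation identity \eqref{any_subdivision} this yields
\[
\LL(x_j^\rt,s;y_j^\rt,t)+\LL(x_j^\lt,s;y_j^\lt,t)=\LL(x_j^\lt,s;y_j^\rt,t)+\LL(x_j^\rt,s;y_j^\lt,t),
\]
i.e.\ $\mu_{s;t}([x_j^\lt,x_j^\rt]\times[y_j^\lt,y_j^\rt])=0$. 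Since these rectangles shrink to $(x,y)$, we conclude $(x,y)\notin\Supp(\mu_{s;t})$.

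The main obstacle is the hard direction, specifically the step that turns the near-intersection of $\wt\gamma_j^\lt$ with $\gamma^\lt$ (and similarly on the right) into a matching pair of intersection points usable for the swap argument. Theorem~\ref{no_arbitrary_closeness} gives a qualitative intersection but not its location; I would need to argue that the swap can be performed so that the two resulting mixed geodesics coincide at some intermediate time—leveraging the fact that $\gamma^\lt$ and $\gamma^\rt$ themselves agree at $(z^*,r^*)$ together with planar ordering. Secondary technical points include constructing the limiting-geodesics lemma uniformly over all $(s,t)$ on the event $\PP\cap\OO$, and verifying that the countable exhaustion of $\R^4_\uparrow$ permits the uniform $\eps_m$ construction so that no step depends on a time-dependent null set.
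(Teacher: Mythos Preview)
Your overall architecture matches the paper's proof closely: contrapositives for both inclusions, crossing geodesics plus concatenation for the easy direction, and limiting geodesics combined with Theorem~\ref{no_arbitrary_closeness} for the hard direction. The limiting-geodesics statement you propose to prove is exactly Lemma~\ref{limiting_geodesics}, which holds on $\PP$ alone.

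However, the obstacle you explicitly flag---locating the intersection points so that the swap can be performed at $(z^*,r^*)$---is the crux, and your proposal does not resolve it. The paper's resolution consists of two ideas you are missing. First, apply Theorem~\ref{no_arbitrary_closeness} not on all of $[s,t]$ but \emph{separately} on the subintervals $[s,r^*]$ and $[r^*,t]$ (the restrictions are geodesics by Lemma~\ref{subgeodesic_lemma}\ref{subgeodesic_lemma_b}). This forces, for large $j$, an intersection time $\sigma_j\in[s,r^*]$ and another $\tau_j\in[r^*,t]$ between $\wt\gamma_j^\lt$ and $\gamma^\lt$. Second, since $\gamma^\lt$ is the \emph{leftmost} element of $G_{(x,s;y,t)}$ and $\wt\gamma_j^\lt\leq\gamma^\lt$ by construction, the path $\gamma^\lt\big|_{[s,\sigma_j]}\oplus\wt\gamma_j^\lt\big|_{[\sigma_j,\tau_j]}\oplus\gamma^\lt\big|_{[\tau_j,t]}$ lies in $G_{(x,s;y,t)}$ by Lemma~\ref{new_geodesics}\ref{new_geodesics_1}; leftmost-ness then forces $\wt\gamma_j^\lt=\gamma^\lt$ on $[\sigma_j,\tau_j]\ni r^*$. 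The symmetric argument gives $\wt\gamma_j^\rt(r^*)=\gamma^\rt(r^*)$, hence $\wt\gamma_j^\lt(r^*)=\wt\gamma_j^\rt(r^*)$ directly, and Lemma~\ref{constant_lemma}\ref{constant_lemma_b} (equivalently your displayed measure identity) finishes. In part~(a) only one application of Theorem~\ref{no_arbitrary_closeness} on $[r^*,t]$ is needed, since the shared initial endpoint already pins the geodesics together at time $0$.

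A minor correction in your easy direction: Lemma~\ref{reordering_lemma}\ref{reordering_lemma_c} only produces \emph{some} sandwiched geodesic, whereas you need \emph{every} $\gamma\in G_{(x,s;y,t)}$ to pass through $(z,r)$. The paper first applies Lemma~\ref{reordering_lemma} to the crossing pair $\gamma_{\lt\rt},\gamma_{\rt\lt}$ on each of $[s,r]$ and $[r,t]$ so that the concatenated geodesics satisfy $\gamma^\lt\leq\gamma^\rt$, and then invokes the ordering event $\OO$ directly to sandwich every $\gamma$.
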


We will prove this result in Section \ref{containment_proof} after stating two key lemmas.

\subsection{Results about geodesics with ordered endpoints}
We begin by considering sequences of geodesics whose endpoints converge monotonically. 
For the particular lemma we state below, the time horizon is irrelevant; so let us fix it to be $[0,1]$ and write $G_{x,y} = G_{(x,0;y,1)}$ for sets of geodesics.
Recall Definition~\ref{left_right_def} of leftmost and rightmost geodesics, whose existence is given by the almost sure event $\EE$ from \eqref{existence_event}.
Recall also that $\EE$ is implied by $\PP$, the event from Definition~\ref{proper_def}.

\begin{lemma} \label{limiting_geodesics}
On the event $\PP$, the following statements hold for any $x,y\in\R$.
Let $\gamma^L$ and $\gamma^\rt$ be the leftmost and rightmost geodesics in $G_{x,y}$.
\begin{enumerate}[label=\textup{(\alph*)}]
\item If $x_j\nearrow x$ and $y_j\nearrow y$, then there is a sequence of $\gamma_j\in G_{x_j,y_j}$ so that $\gamma_j\nearrow\gamma^\lt$ uniformly.
\item If $x_j\searrow x$ and $y_j\searrow y$, then there is a sequence of $\gamma_j\in G_{x_j,y_j}$ so that $\gamma_j\nearrow\gamma^\rt$ uniformly.
\end{enumerate}
\end{lemma}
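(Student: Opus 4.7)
The plan is to prove (a) on the event $\PP$ by inductively constructing a monotone sequence of geodesics trapped below $\gamma^\lt$, then identifying the pointwise monotone limit with $\gamma^\lt$ itself using continuity of $\LL$ and the leftmost property. Part (b) will follow by the symmetric argument. All ingredients are deterministic or consequences of $\PP$, so we will not need the additional ordering events $\OO$ or $\OO_x$.

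For the construction, apply Lemma~\hyperref[reordering_lemma_b]{\ref*{reordering_lemma}\ref*{reordering_lemma_b}} with $\gamma_2 = \gamma^\lt \in G_{x,y}$ and target endpoints $(x_1,y_1)$, which satisfy $x_1 \leq x$ and $y_1 \leq y$, to produce $\gamma_1 \in G_{x_1,y_1}$ with $\gamma_1 \leq \gamma^\lt$. Inductively, given $\gamma_{j-1} \in G_{x_{j-1},y_{j-1}}$ with $\gamma_{j-1} \leq \gamma^\lt$, apply Lemma~\hyperref[reordering_lemma_c]{\ref*{reordering_lemma}\ref*{reordering_lemma_c}} to the pair $(\gamma_{j-1},\gamma^\lt)$ with intermediate endpoints $(x_j,y_j)$ satisfying $(x_{j-1},y_{j-1}) \leq (x_j,y_j) \leq (x,y)$ to obtain $\gamma_j \in G_{x_j,y_j}$ with $\gamma_{j-1} \leq \gamma_j \leq \gamma^\lt$. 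The sequence $(\gamma_j)$ is thereby pointwise monotone increasing and dominated by the continuous function $\gamma^\lt$, so the pointwise limit $\gamma^*(r) \coloneqq \lim_{j\to\infty} \gamma_j(r)$ exists on $[0,1]$, satisfies $\gamma^* \leq \gamma^\lt$, and has $\gamma^*(0)=x$, $\gamma^*(1)=y$.

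To identify $\gamma^*$ with $\gamma^\lt$, fix $r\in(0,1)$. Lemma~\hyperref[subgeodesic_lemma_b]{\ref*{subgeodesic_lemma}\ref*{subgeodesic_lemma_b}} gives
\[
\LL(x_j,0;\gamma_j(r),r) + \LL(\gamma_j(r),r;y_j,1) = \LL(x_j,0;y_j,1),
\]
and since $\gamma_j(r)\to\gamma^*(r)$ and $\LL$ is continuous on $\PP$, passing to $j\to\infty$ yields $\LL(x,0;\gamma^*(r),r) + \LL(\gamma^*(r),r;y,1) = \LL(x,0;y,1)$. Thus $\gamma^*(r)$ attains the supremum in \eqref{max_prop} at time $r$. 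Concatenating any geodesic in $G_{(x,0;\gamma^*(r),r)}$ with any geodesic in $G_{(\gamma^*(r),r;y,1)}$ (both sets are nonempty by property~\hyperref[proper_def_3]{\ref*{proper_def}\ref*{proper_def_3}}) produces, via Lemma~\hyperref[subgeodesic_lemma_a]{\ref*{subgeodesic_lemma}\ref*{subgeodesic_lemma_a}}, an element $\tilde\gamma \in G_{x,y}$ with $\tilde\gamma(r)=\gamma^*(r)$. The leftmost property forces $\gamma^\lt(r)\leq\tilde\gamma(r)=\gamma^*(r)$, and combining with $\gamma^*(r)\leq\gamma^\lt(r)$ we conclude $\gamma^*(r)=\gamma^\lt(r)$. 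As $\gamma^*=\gamma^\lt$ is continuous on the compact interval $[0,1]$ and $\gamma_j\nearrow\gamma^*$ pointwise, Dini's theorem upgrades this to uniform convergence. Part (b) is obtained by the exactly parallel argument: Lemmas~\hyperref[reordering_lemma_a]{\ref*{reordering_lemma}\ref*{reordering_lemma_a}} and \hyperref[reordering_lemma_c]{\ref*{reordering_lemma}\ref*{reordering_lemma_c}} supply a monotone decreasing sequence sandwiched above $\gamma^\rt$, and the rightmost property closes the identification.

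The main obstacle is the identification step: a priori the pointwise limit $\gamma^*$ could be discontinuous, or no geodesic of $G_{x,y}$ need pass through the limiting space-time point $(\gamma^*(r),r)$. Both concerns dissolve once we observe that continuity of $\LL$ carries the maximizer identity through to the limit, whereupon concatenation of geodesic halves manufactures a genuine element of $G_{x,y}$ realizing $\gamma^*(r)$; the leftmost property then removes the last ambiguity. Notably, this route avoids any appeal to uniform Hölder regularity of the sequence or an Arzelà–Ascoli extraction.
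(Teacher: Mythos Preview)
Your proof is correct and takes a genuinely different route from the paper's. Both arguments begin the same way: use Lemma~\ref{reordering_lemma} to build a monotone sequence $\gamma_j \leq \gamma^\lt$ and take the pointwise limit $\gamma^*$. The divergence is in how one shows $\gamma^* = \gamma^\lt$.

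The paper first proves that $\gamma^*$ is continuous, by a contradiction argument exploiting Definition~\hyperref[proper_def_2]{\ref*{proper_def}\ref*{proper_def_2}}: a jump in $\gamma^*$ would force subgeodesic segments whose $\LL$-values diverge to $-\infty$, contradicting boundedness of $\LL$ on compacta. Only then does the paper verify that $\LL(\gamma^*) = \LL(x,0;y,1)$ by passing partition identities through the limit, concluding $\gamma^* \in G_{x,y}$ and hence $\gamma^* = \gamma^\lt$ by leftmostness. Your argument instead bypasses the continuity question entirely: you pass the single-time identity $\LL(x_j,0;\gamma_j(r),r)+\LL(\gamma_j(r),r;y_j,1)=\LL(x_j,0;y_j,1)$ to the limit, manufacture via concatenation a geodesic $\tilde\gamma \in G_{x,y}$ with $\tilde\gamma(r)=\gamma^*(r)$, and invoke leftmostness pointwise to pin $\gamma^*(r)=\gamma^\lt(r)$. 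Continuity of $\gamma^*$ then comes for free since $\gamma^\lt$ is already continuous, and Dini applies.

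Your route is arguably cleaner, since it avoids the somewhat delicate blow-up estimate; the paper's route has the minor advantage of exhibiting $\gamma^*$ directly as a geodesic without constructing auxiliary concatenations. One small correction: the nonemptiness of $G_{(x,0;\gamma^*(r),r)}$ and $G_{(\gamma^*(r),r;y,1)}$ is not quite property~\hyperref[proper_def_3]{\ref*{proper_def}\ref*{proper_def_3}} (which only asserts that a maximizing $z$ exists, not that a full geodesic does); rather, cite the inclusion $\PP \subset \EE$ recorded at \eqref{existence_event}.
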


\begin{proof}
The statements (a) and (b) are symmetric to one another, and so we will prove only (a).

Take any sequences $x_j\nearrow x$ and $y_j\nearrow y$.
By the assumed occurrence of $\EE\supset\PP$, there exists a leftmost geodesic $\gamma^\lt\in G_{x,y}$, and $G_{x_j,y_j}$ is nonempty for every $j$.
By successive applications of Lemma~\ref{reordering_lemma}, we may assume that $\gamma_j(t)\leq\gamma_{j+1}(t) \leq \gamma^\lt(t)$ for all $r\in[0,1]$.
Therefore, $\gamma_j(t)$ must converge as $j\to\infty$ to some value we call $\gamma(t)$, which is at most $\gamma^\lt(t)$.
We claim that $\gamma\in G_{x,y}$, and so $\gamma$ is necessarily equal to $\gamma^\lt$.

First we show that $\gamma : [0,1]\to\R$ is continuous.
In particular, the convergence $\gamma_j\nearrow\gamma$ is uniform by Dini's theorem.
Suppose toward a contradiction that $\gamma$ is discontinuous at some $r\in[0,1]$.
We will assume $\gamma$ is right-discontinuous at $r$ (in particular, $r<1$); the case of left-discontinuity is handled in a symmetric fashion.
That is, there exists some $\eps>0$ such that for every $\delta>0$, we have
\eq{
\sup_{t'\in(r,r+\delta]\cap[0,1]} |\gamma(t)-\gamma(r)| \geq 4\eps.
}
In particular, there is a sequence $t_\ell\searrow r$ such that
\eq{
|\gamma(t_\ell) - \gamma(r)| \geq 3\eps \quad \text{for every $\ell$}.
}
From the pointwise convergence $\gamma_j\to\gamma$, we can select indices $j_\ell\nearrow\infty$ such that
\eq{
|\gamma_{j_\ell}(t_\ell) - \gamma(t_\ell)| \leq \eps \quad \text{and} \quad |\gamma_{j_\ell}(r)-\gamma(r)|\leq\eps \quad \text{for every $\ell$}.
}
These choices yield 
\eq{
|\gamma_{j_\ell}(t_\ell) - \gamma_{j_\ell}(r)| \geq \eps \quad \text{for every $\ell$}.
}
In light of Remark~\hyperref[proper_remark_b]{\ref*{proper_remark}\ref*{proper_remark_b}} and the fact that $t_\ell\searrow r$, this last display implies
\eeq{ \label{toward_neg_inf}
\lim_{\ell\to\infty} \LL(\gamma_{j_\ell}(r),r;\gamma_{j_\ell}(t_\ell),t_\ell) = -\infty.
}
On the other hand, Remark~\hyperref[proper_remark_a]{\ref*{proper_remark}\ref*{proper_remark_a}} guarantees
\begin{subequations}
\label{not_inf}
\begin{linenomath}\postdisplaypenalty=0
\begin{align}
\limsup_{\ell\to\infty} |\LL(\gamma_{j_\ell}(t_\ell),t_\ell;y_{j_\ell},t)| < \infty,
\intertext{as well as}
\limsup_{\ell\to\infty} |\LL(\gamma_{j_\ell}(r),r;y_{j_\ell},t)| < \infty.
\end{align}
\end{linenomath}
\end{subequations}
Given that each $\gamma_{j_\ell}$ is a geodesic, we have
\eq{
\LL(\gamma_{j_\ell}(r),r;y_{j_\ell},t) = \LL(\gamma_{j_\ell}(r),r;\gamma_{j_\ell}(t_\ell),t_\ell)+\LL(\gamma_{j_\ell}(t_\ell),t_\ell;y_{j_\ell},t),
}
and so \eqref{toward_neg_inf} is in contradiction with \eqref{not_inf}.
Consequently, $\gamma$ must be continuous on all of $[0,1]$.

To complete the proof, we need to show $\LL(\gamma) = \LL(x,0;y,1)$.
For any partition $0=t_0<t_1<\cdots<t_k=1$, we have 
\eq{
\sum_{i=1}^k \LL(\gamma(t_{i-1}),t_{i-1};\gamma(t_i),t_i) 
&=  \sum_{i=1}^k \lim_{j\to\infty} \LL(\gamma_j(t_{i-1}),t_{i-1};\gamma_j(t_i),t_i) \\
&= \lim_{j\to\infty} \LL(x_j,0;y_j,1) = \LL(x,0;y,1),
}
where the first and last equalities hold by continuity of $\LL$, and the middle equality is valid because each $\gamma_j$ is a geodesic.
Taking an infimum over all partitions, we conclude that $\LL(\gamma)=\LL(x,0;y,1)$.
\end{proof}

We will need one more fact from \cite{basu-ganguly-hammond21} which is stated as the next lemma. 
It links the disjointness of geodesics to the measures $\mu_{x_1,x_2;t}$ and $\mu_{s;t}$.
Although originally proved for Brownian LPP, it needs no revision in its extension to the directed landscape. 
Nevertheless, given the conceptual importance, we recall the ideas of the proof in Figure~\ref{diff_wt_constant}.
Since only part \ref{constant_lemma_a} was explicitly stated in \cite{basu-ganguly-hammond21}, we also point out in Figure~\ref{diff_wt_constant} the equivalence of part \ref{constant_lemma_b}.
Let $\NI_{s,t}(x_1,x_2;y_1,y_2)$ be the event that every $\gamma_1\in G_{(x_1,s;y_1,t)}$ is disjoint from every $\gamma_2\in G_{(x_2,s;y_2,t)}$.
\begin{lemma}
\label{constant_lemma}
\textup{\cite[Lemma 3.6]{basu-ganguly-hammond21}}
The following statements hold on the event $\PP$ from Definition \ref{proper_def}.
\begin{enumerate}[label=\textup{(\alph*)}]

\item \label{constant_lemma_a}
If $\NI_{0,t}(x_1,x_2;y^\lt,y^\rt)$ does not occur for some $y^\lt<y^\rt$, then $(y^\lt,y^\rt)\cap\Supp(\mu_{x_1,x_2;t})$ is empty.

\item \label{constant_lemma_b}
 If $\NI_{s,t}(x^\lt,x^\rt;y^\lt,y^\rt)$ does not occur for some $x^\lt<x^\rt$ and $y^\lt<y^\rt$, then $\big((x^\lt,x^\rt)\times(y^\lt,y^\rt)\big)\cap\Supp(\mu_{s;t})$ is empty.

\end{enumerate}
\end{lemma}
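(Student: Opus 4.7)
The plan is to derive, in both parts, an inequality of the form
\begin{align*}
\LL(a,s;d,t)+\LL(b,s;c,t) \;\geq\; \LL(a,s;c,t)+\LL(b,s;d,t), \qquad a<b,\ c<d,
\end{align*}
by a ``path-swap'' at an intersection point of the two given geodesics. Combined with the reverse inequality \hyperref[proper_def_5]{\ref*{proper_def}\ref*{proper_def_5}} built into the definition of a proper landscape, this forces equality, which is exactly the statement that the relevant measure ($\mu_{x_1,x_2;t}$ or $\mu_{s;t}$) assigns zero mass to the closed interval (respectively, closed rectangle). Nonnegativity of the measure then propagates to every sub-interval or sub-rectangle, so the open set named in the lemma lies outside the support.

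To carry this out for part \ref{constant_lemma_a}, the failure of $\NI_{0,t}(x_1,x_2;y^\lt,y^\rt)$ furnishes $\gamma_1 \in G_{(x_1,0;y^\lt,t)}$ and $\gamma_2 \in G_{(x_2,0;y^\rt,t)}$ sharing a point $(z,r)$. Because $x_1\neq x_2$ and $y^\lt\neq y^\rt$, continuity forces $r\in(0,t)$. I would then consider the concatenated paths $\gamma_1|_{[0,r]}\oplus\gamma_2|_{[r,t]}$ and $\gamma_2|_{[0,r]}\oplus\gamma_1|_{[r,t]}$, which connect $(x_1,0)\to(y^\rt,t)$ and $(x_2,0)\to(y^\lt,t)$ respectively. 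Their suboptimality, together with the subgeodesic identity \eqref{any_subdivision} applied to $\gamma_1$ and $\gamma_2$ at time $r$, gives
\begin{align*}
\LL(x_1,0;y^\rt,t)+\LL(x_2,0;y^\lt,t) &\geq \bigl[\LL(x_1,0;z,r)+\LL(z,r;y^\rt,t)\bigr]+\bigl[\LL(x_2,0;z,r)+\LL(z,r;y^\lt,t)\bigr]\\
&= \LL(x_1,0;y^\lt,t)+\LL(x_2,0;y^\rt,t).
\end{align*}
This is precisely $\mu_{x_1,x_2;t}([y^\lt,y^\rt])\leq 0$; coupled with $\mu_{x_1,x_2;t}([y^\lt,y^\rt])\geq 0$ from \hyperref[proper_def_5]{\ref*{proper_def}\ref*{proper_def_5}}, it gives the zero equality, and hence $(y^\lt,y^\rt)\cap\Supp(\mu_{x_1,x_2;t})=\varnothing$.

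Part \ref{constant_lemma_b} proceeds by the identical exchange. The non-occurrence of $\NI_{s,t}(x^\lt,x^\rt;y^\lt,y^\rt)$ yields $\gamma_1 \in G_{(x^\lt,s;y^\lt,t)}$ and $\gamma_2 \in G_{(x^\rt,s;y^\rt,t)}$ meeting at some $(z,r)$ with $r\in(s,t)$ (again, the strict orderings at the time boundaries together with continuity exclude $r\in\{s,t\}$). The same concatenation argument delivers
\begin{align*}
\LL(x^\lt,s;y^\rt,t)+\LL(x^\rt,s;y^\lt,t)\;\geq\;\LL(x^\lt,s;y^\lt,t)+\LL(x^\rt,s;y^\rt,t),
\end{align*}
which reads $\mu_{s;t}([x^\lt,x^\rt]\times[y^\lt,y^\rt])\leq 0$, so the reverse inequality from \hyperref[proper_def_5]{\ref*{proper_def}\ref*{proper_def_5}} forces equality to zero. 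Because $\mu_{s;t}$ is a nonnegative measure, the open sub-rectangle $(x^\lt,x^\rt)\times(y^\lt,y^\rt)$ inherits measure zero and is therefore disjoint from $\Supp(\mu_{s;t})$.

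I do not anticipate any real obstacle: the only two ingredients used are the subgeodesic/concatenation identity of Lemma~\hyperref[subgeodesic_lemma_b]{\ref*{subgeodesic_lemma}\ref*{subgeodesic_lemma_b}} and the nonnegativity built into \hyperref[proper_def_5]{\ref*{proper_def}\ref*{proper_def_5}}, both available on the event $\PP$. The one minor care-point is confirming that the intersection time $r$ is strictly interior, which uses only the planar ordering of the four distinct spatial endpoints; everything else is mechanical.
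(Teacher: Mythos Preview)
Your argument is correct and uses the same path-swap idea at the intersection point as the paper's sketch (Figure~\ref{diff_wt_constant} and \cite[Lemma 3.6]{basu-ganguly-hammond21}). The paper phrases it by fixing an intermediate $y\in(y^\lt,y^\rt)$, building geodesics from $(x_1,0)$ and $(x_2,0)$ to $(y,t)$ through the intersection point via Lemmas~\ref{new_geodesics} and \hyperref[reordering_lemma_c]{\ref*{reordering_lemma}\ref*{reordering_lemma_c}}, and reading off that $\ZZ_{x_1,x_2;t}(y)$ is the constant $\LL(x_2,0;z,r_*)-\LL(x_1,0;z,r_*)$; you instead swap once at the endpoints to get $\mu_{x_1,x_2;t}([y^\lt,y^\rt])\leq 0$ and then invoke \hyperref[proper_def_5]{\ref*{proper_def}\ref*{proper_def_5}}. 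Your packaging is slightly more economical since it avoids the intermediate-$y$ construction, but the substantive content is identical.
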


\begin{figure}
\centering
\includegraphics[trim=0.9in 0.7in 0.5in 0.5in, clip, width=0.5\textwidth]{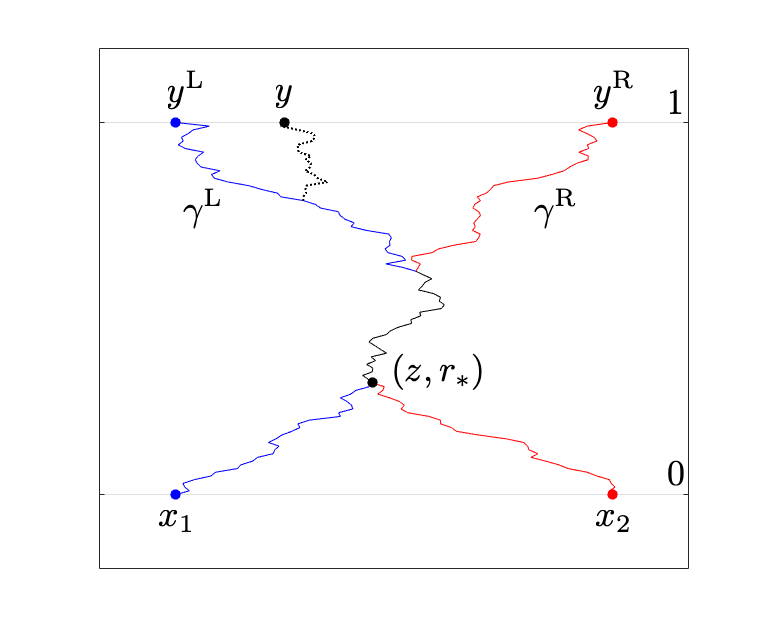}
\caption{Proof sketch of Lemma~\ref{constant_lemma} with $s=0$ and $t=1$.
In the above diagram, $\gamma^\lt\in G_{x_1,y^\lt}$ and $\gamma^\rt\in G_{x_2,y^\rt}$.
It is assumed that these two geodesics intersect so that $\NI_{0,1}(x_1,x_2,y^\lt,y^\rt)$ does not occur, and $(z,r_*)$ is their lowest point of intersection.
If $y\in(y^\lt,y^\rt)$, then a combination of Lemmas~\ref{new_geodesics} and \hyperref[reordering_lemma_c]{\ref*{reordering_lemma}\ref*{reordering_lemma_c}} yields geodesics $\gamma_1\in G_{x_1,y}$ and $\gamma_2\in G_{x_2,y}$ which agree at all points above and including $(z,r_*)$, and then follow $\gamma^\lt$ and $\gamma^\rt$ respectively below time $r_*$.
As this can be done for every $y\in(y^\lt,y^\rt)$, we conclude that $\ZZ_{x_1,x_2}$ is constant on this interval (recall from Figure~\ref{diff_wt} how the value of $\ZZ_{x_1,x_2}(y)$ can be determined), the precise constant being $\LL(x_2,0;z,r_*)-\LL(x_1,0;z,r_*)$.
From definitions \eqref{mu_1var_def} and \eqref{mu_2var_def}, this implies $\mu_{x_1,x_2}([y_1,y_2]) = \mu([x_1,x_2]\times[y_1,y_2]) = 0$, meaning $y\notin\Supp(\mu_{x_1,x_2})$ and $(x,y)\notin\Supp(\mu)$ for any $x\in(x_1,x_2)$.}
\label{diff_wt_constant}
\end{figure}

\subsection{Proving equality of measure supports and exceptional sets} \label{containment_proof}
We are now ready to prove Proposition~\ref{containment_thm}, the central result of Section \ref{lower_bound}.

\begin{proof}[Proof of Proposition~\ref{containment_thm}]
We first prove that $\R\setminus\DD_{x_1,x_2;t} \subset \R\setminus\Supp(\mu_{x_1,x_2;t})$.
Here we need only to assume the occurrence of $\PP$.
Consider any $y\in\R\setminus\DD_{x_1,x_2;t}$.
Let $\gamma^\lt$ and $\gamma^\rt$ be the leftmost and rightmost geodesics in $G_{(x_1,0;y,t)}$ and $G_{(x_2,0;y,t)}$, respectively.
As $y\notin\DD_{x_1,x_2;t}$, there must be some $r_*\in(0,t)$ such that $\gamma^\lt(r_*)=\gamma^\rt(r_*)$.

Next take any strictly monotonic sequences $y_j^\lt\nearrow y$ and $y_j^\rt\searrow y$, along with geodesics $\gamma_j^\lt\in G_{(x_1,0;y_j^\lt,t)}$ and $\gamma_j^\rt\in G_{(x_2,0;y_j^\rt,t)}$ guaranteed by Lemma~\ref{limiting_geodesics}; see Figure~\ref{pre_1var}.
\begin{figure}[]
\centering
\subfloat[{$\gamma_j^\lt$ and $\gamma_j^\rt$ may be initially disjoint}]{
\includegraphics[trim=0.5in 0.5in 0.5in 0.5in, clip, width=0.48\textwidth]{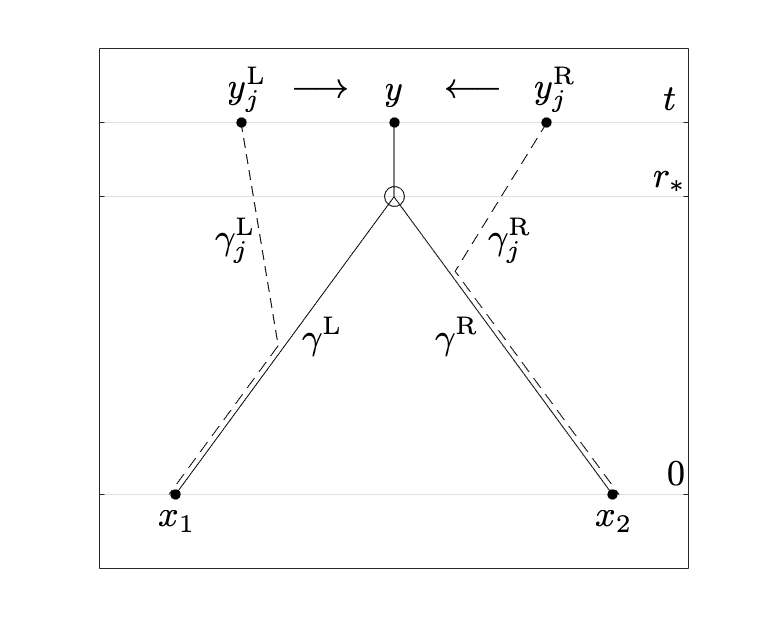}
\label{pre_1var}
}
\hfill
\subfloat[\, {\parbox[c][][c]{1.6in}{$j$ sufficiently large \newline $\implies$ intersections above $r_*$ \newline $\implies$ intersection at time $r_*$}}]{
\includegraphics[trim=0.5in 0.5in 0.5in 0.5in, clip, width=0.48\textwidth]{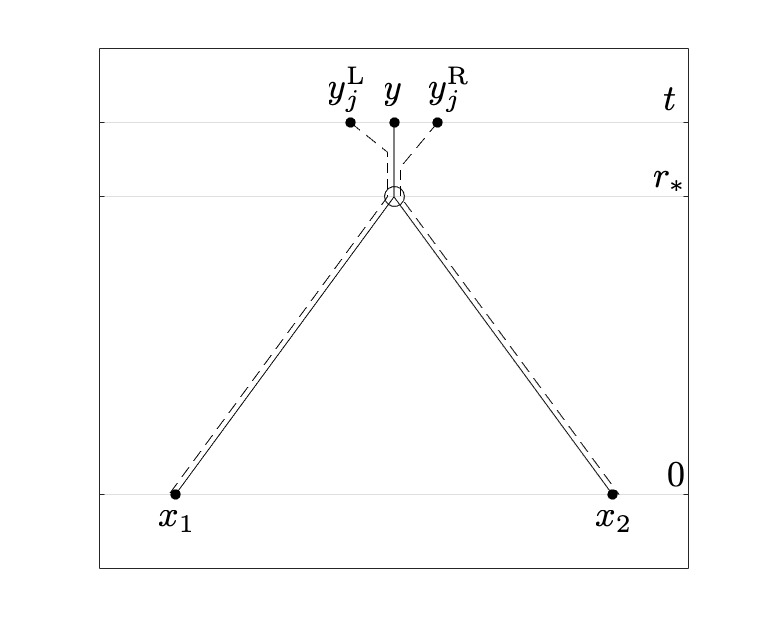}
\label{post_1var}	
}
\\
\subfloat[{$\gamma_j^\lt$ and $\gamma_j^\rt$ may be initially disjoint}]{
\includegraphics[trim=0.5in 0.5in 0.5in 0.5in, clip, width=0.48\textwidth]{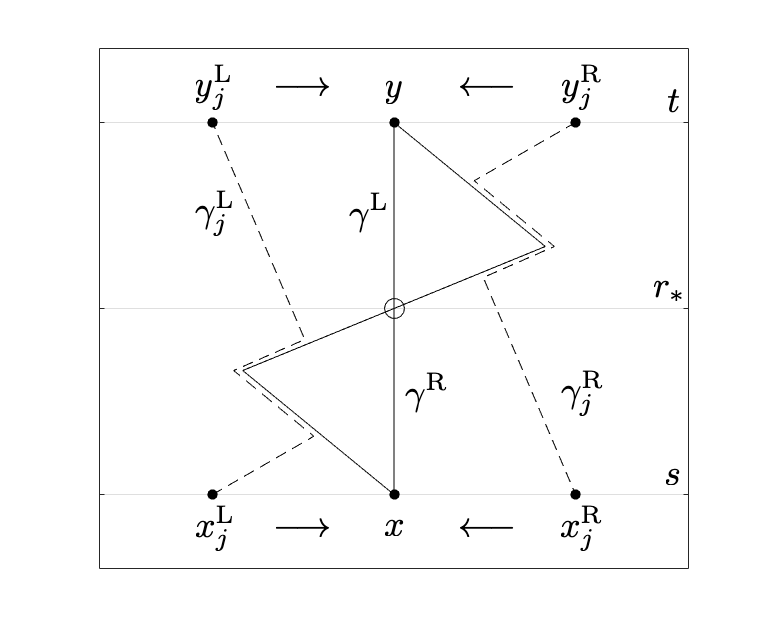}
\label{pre_2var}
}
\hfill
\subfloat[\, {\parbox[c][][c]{2in}{$j$ sufficiently large \newline $\implies$ intersections above/below $r_*$ \newline $\implies$ intersection at time $r_*$}}]{
\includegraphics[trim=0.5in 0.5in 0.5in 0.5in, clip, width=0.48\textwidth]{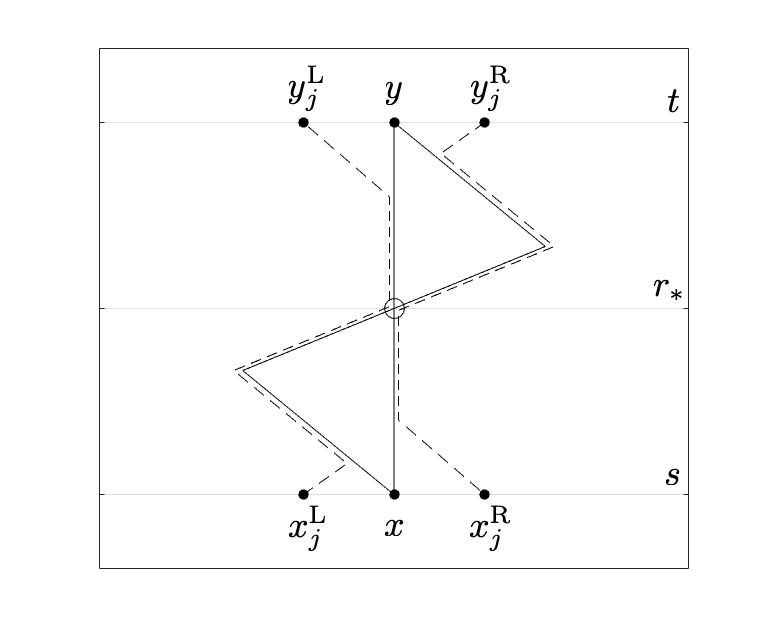}
\label{post_2var}	
}
\caption{Geodesics considered in the proof of Proposition~\ref{containment_thm}.
Diagrams (a) and (b) illustrate the argument for $\Supp(\mu_{x_1,x_2;t})\subset\DD_{x_1,x_2;t}$, while (c) and (d) illustrate the argument for $\Supp(\mu_{s;t})\subset\DD_{s;t}$.
In each case, there is assumed to be some intermediate time $r_*$ at which $\gamma^\lt$ and $\gamma^\rt$ intersect;
the point of intersection is marked with an open circle.
If $\gamma_j^\lt$ intersects $\gamma^\lt$ in (a), then the two geodesics coincide at all lower times; similarly for $\gamma_j^\rt$ with $\gamma^\rt$.
When $j\to\infty$, both pairs must experience intersections above time $r_*$, thereby forcing an intersection between $\gamma_j^\lt$ and $\gamma_j^\rt$ at time $r_*$, where $\gamma^\lt$ and $\gamma^\rt$ agree.
A slightly different argument is needed for the scenario in (c). 
If $\gamma_j^\lt$ intersects $\gamma^\lt$ at \textit{two} distinct times, then the two geodesics coincide at all times in between; similarly for $\gamma_j^\rt$ with $\gamma^\rt$.
For both pairs, sending $j\to\infty$ forces at least one intersection before time $r_*$ and one after time $r_*$.
Hence a common intersection eventually appears at $r_*$.
}
\label{non_trivial_intersections}
\end{figure}
That is, $\gamma_j^\lt\nearrow\gamma^\lt$ and $\gamma_j^\rt\searrow\gamma^\rt$, uniformly as $j\to\infty$.
So for any $\eps>0$, we have the following for all $j$ sufficiently large:
\eq{
\|\gamma_j^\lt-\gamma^\lt\|_\infty < \eps. 
}
In particular, we can choose some $j$ for which
\eq{
|\gamma_j^\lt(r)-\gamma^\lt(r)| < \eps 
\quad \text{for all $r\in[r_*,t]$}.
}
Given Theorem~\ref{no_arbitrary_closeness}---applied to a \textit{random} compact set $K\subset\R^4_\uparrow$ containing both $(\gamma^\lt(r_*),r_*;y,t)$ and $(\gamma_j^\lt(r_*),r_*;y_j^\lt,t)$ for every $j$---we can take $\eps$ to be so small that the above display implies
$\gamma_j^\lt(\tau) = \gamma^\lt(\tau)$ for some $\tau\in[r_*,t]$. 
We now claim that 
\eeq{ \label{remain_touching_left}
\gamma_j^\lt(r) = \gamma^\lt(r) \quad \text{for all $r\in[0,\tau]$}.
}
Indeed, Lemma~\hyperref[new_geodesics_3]{\ref*{new_geodesics}\ref*{new_geodesics_3}} shows that $\gamma_j^\lt\big|_{[0,\tau]}\oplus\gamma^\lt\big|_{[\tau,t]}$ belongs to $G_{(x_1,0;y,t)}$.
If \eqref{remain_touching_left} were false, then because we already know $\gamma_j \leq \gamma^\lt$ by construction, we would have $\gamma_j^\lt(r) < \gamma^\lt(r)$ for some $r\in(0,\tau)$, 
in which case the previous sentence would contradict the choice of  $\gamma^\lt$ as the leftmost element of $G_{(x_1,0;y,t)}$.
Therefore, our claim \eqref{remain_touching_left} is true; in particular, we have $\gamma_j^\lt(r_*)=\gamma^\lt(r_*)$ for all large $j$.

By the same logic, we also have $\gamma_j^\rt(r_*) = \gamma^\rt(r_*)$ for all large enough $j$.
Since $\gamma^\lt(r_*)=\gamma^\rt(r_*)$ by assumption, we now see that $\gamma_j^\lt(r_*) = \gamma_j^\rt(r_*)$ for all large $j$, as shown in Figure~\ref{post_1var}.
In particular, there is some $j$ for which $\NI_{0,t}(x_1,x_2,y^\lt_j,y^\rt_j)$ does not occur, and so $y$ does not belong to $\Supp(\mu_{x_1,x_2;t})$ by Lemma~\hyperref[constant_lemma_a]{\ref*{constant_lemma}\ref*{constant_lemma_a}}.

The argument to show $\R^2\setminus\DD_{s;t}\subset\R^2\setminus\Supp(\mu_{s;t}) $ will be similar.
Consider any $(x,y)\in\R^2\setminus\DD_{s;t}$.
Let $\gamma^\lt$ and $\gamma^\rt$ be the leftmost and rightmost geodesics between $(x,s)$ and $(y,t)$.
As before, there must be some $r_*\in(s,t)$ such that $\gamma^\lt(r_*)=\gamma^\rt(r_*)$.
Using Lemma~\ref{limiting_geodesics} once more, we take strictly monotonic sequences $x_j^\lt\nearrow x$, $x_j^\rt\searrow x$, $y^\lt_j\nearrow y$, $y^\rt_j\searrow y$, and consider geodesics $\gamma_j^\lt\in G_{(x_j^\lt,s;y_j^\lt,t)}$ and
$\gamma_j^\rt\in G_{(x_j^\rt,s;y_j^\rt,t)}$ such that $\gamma_j^\lt \nearrow \gamma^\lt$ uniformly and $\gamma_j^\rt\searrow\gamma^\rt$ uniformly.
An illustration is provided in Figure~\ref{pre_2var}.

The same argument as the one leading to \eqref{remain_touching_left}---but now using Lemma~\hyperref[new_geodesics_1]{\ref*{new_geodesics}\ref*{new_geodesics_1}}---tells us that if $\gamma_j^\lt$ intersects $\gamma^\lt$ at \textit{two} distinct times, then the geodesics must agree at all intermediate times.
That is, for any $s < \sigma < \tau <t$, we have the following implication:
\eeq{ \label{remain_touching_2}
\gamma_j^\lt(\sigma) = \gamma^\lt(\sigma),\, \gamma_j^\lt(\tau) = \gamma^\lt(\tau) \quad \implies \quad 
\gamma_j^\lt(r) = \gamma^\lt(r) \quad \text{for all $r\in[\sigma,\tau]$,}
}
Meanwhile, by invoking Theorem~\ref{no_arbitrary_closeness} twice (once for each of the two intervals $[s,r_*]$ and $[r_*,t]$), we can conclude that for all $j$ sufficiently large,
the hypothesis of \eqref{remain_touching_2} is satisfied by some $\sigma\in[s,r_*]$ and some $\tau\in[r_*,t]$.
By the conclusion of \eqref{remain_touching_2}, it follows that $\gamma_j^\lt(r_*) = \gamma^\lt(r_*)$; see Figure~\ref{post_2var}.
By analogous reasoning, we also have $\gamma^\rt_j(r_*)=\gamma^\rt(r_*)$ for all large $j$.
Since $\gamma^\lt(r_*)=\gamma^\rt(r_*)$, it follows that $\gamma^\rt_j(r_*)=\gamma^\lt_j(r_*)$ for some large $j$.
Now  Lemma~\hyperref[constant_lemma_b]{\ref*{constant_lemma}\ref*{constant_lemma_b}} gives the desired conclusion: $(x,y)$ is not an element of $\Supp(\mu_{s;t} )$.

\begin{figure}[]
\centering
\subfloat[Crossing geodesics $\gamma_a$ and $\gamma_b$]{
\includegraphics[trim=0.5in 0.5in 0.5in 0.5in, clip, width=0.48\textwidth]{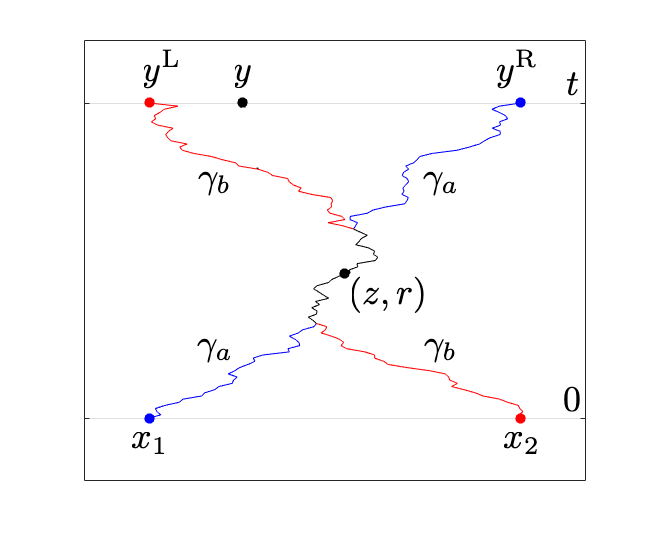}
\label{cross_1}
}
\hfill
\subfloat[Identifying $\gamma^\lt$ and $\gamma^\rt$]{
\includegraphics[trim=0.5in 0.5in 0.5in 0.5in, clip, width=0.48\textwidth]{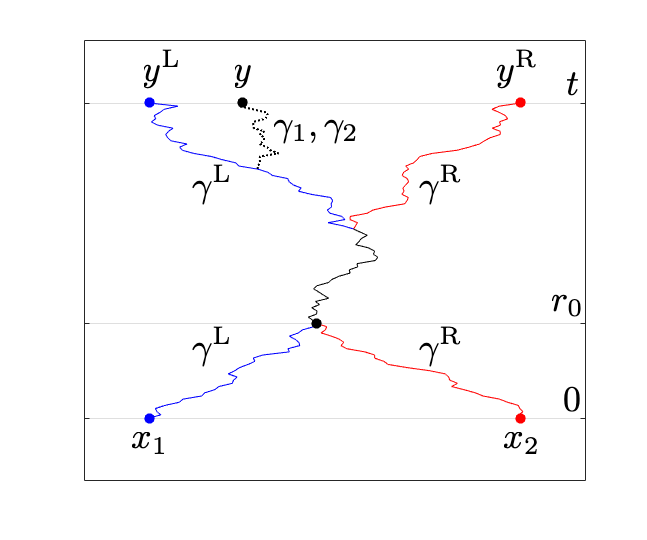}
\label{cross_2}
}
\caption{Geodesics considered in the proof of Proposition~\ref{containment_thm} for showing $\DD_{x_1,x_2;t}\subset\Supp(\mu_{x_1,x_2;t})$.
In (a), the geodesics $\gamma_a: (x_1,0)\to(y^\rt,t)$ and $\gamma_b:(x_2,0)\to(y^\lt,t)$ must intersect by planarity; the point $(z,r)$ can be any intersection point.
In (b), it is assumed that $y\notin\Supp(\mu_{x_1,x_2;t})$, which implies that the paths $\gamma^\lt : (x_1,0)\to(y^\lt,t)$ and $\gamma^\rt: (x_2,0)\to(y^\rt,t)$, formed by concatenating the relevant portions of $\gamma_a$ and $\gamma_b$, are geodesics.
Any geodesic $\gamma_1: (x_1,0)\to(y,t)$ cannot pass to the left of $\gamma^\lt$, while any geodesic $\gamma_2:(x_2,0)\to(y,t)$ cannot pass to the right of $\gamma^\rt$.
Shown here is the typical situation in which $\gamma_1$ and $\gamma_2$ initially coincide with $\gamma^\lt$ and $\gamma^\rt$, respectively.
Hence $\gamma_1$ and $\gamma_2$ will coalesce at the same time $r_0$ at which $\gamma^\lt$ and $\gamma^\rt$ first intersect, although in principle $\gamma_1$ and $\gamma_2$ may intersect earlier.
}
\label{cross_fig}
\end{figure}

Now we prove the reverse containments.
First we show $\R\setminus\Supp(\mu_{x_1,x_2;t})\subset\R\setminus\DD_{x_1,x_2;t}$, for which we assume that $\PP\cap\OO_{x_1}\cap\OO_{x_2}$ occurs.
So take any $y\in\R\setminus\Supp(\mu_{x_1,x_2;t})$; by definition \eqref{mu_t_def}, this means $\ZZ_{x_1,x_2;t}$ is constant on an open interval $(y^\lt,y^\rt)$ containing $y$.
Now take any geodesics $\gamma_a \in G_{(x_1,0;y^\rt,t)}$ and $\gamma_b \in G_{(x_2,0;y^\lt,t)}$.
By planarity, $\gamma_a$ and $\gamma_b$ must intersect at some time $r\in(0,t)$.
That is, $\gamma_a(r)=\gamma_b(r) = z$ for some $z\in\R$, as in illustrated in Figure~\ref{cross_1}.
By applying Lemma \ref{reordering_lemma} separately on the intervals $[0,r]$ and $[r,t]$, we may assume that $\gamma_a\big|_{[0,r]}\leq\gamma_b\big|_{[0,r]}$ and $\gamma_b\big|_{[r,t]}\leq\gamma_a\big|_{[r,t]}$; here we are using Lemma~\hyperref[subgeodesic_lemma_b]{\ref*{subgeodesic_lemma}\ref*{subgeodesic_lemma_b}} to ensure these subpaths are geodesics.
Meanwhile, Lemma~\hyperref[subgeodesic_lemma_a]{\ref*{subgeodesic_lemma}\ref*{subgeodesic_lemma_a}} allows us to write
\eeq{ \label{crossing_step}
0 &= \ZZ_{x_1,x_2;t}(y^\rt)-\ZZ_{x_1,x_2;t}(y^\lt) \\
&= \LL(x_2,0;y^\rt,t) - \LL(\gamma_a) - \LL(\gamma_b) + \LL(x_1,0;y^\lt,t) \\
&= [\LL(x_2,0;y^\rt,t) - \LL(x_2,0;z,r) - \LL(z,r;y^\rt,t)] + [\LL(x_1,0;y^\lt,t) - \LL(x_1,0;z,r) - \LL(z,r;y^\lt,t)]. \raisetag{2\baselineskip}
}
By \eqref{max_prop}, each bracketed sum in the final line of \eqref{crossing_step} is nonnegative and so must actually be equal to zero.
Hence the composition $\gamma^\lt \coloneqq \gamma_a\big|_{[0,r]}\oplus\gamma_b\big|_{[r,t]}$ is an element of $G_{(x_1,0;y^\lt,t)}$, while $\gamma^\rt\coloneqq\gamma_b\big|_{[0,r]}\oplus\gamma_a\big|_{[r,t]}$ is an element of $G_{(x_2,0;y^\rt,t)}$.
To complete the argument, we now consider any $\gamma_1\in G_{(x_1,0;y,t)}$ and $\gamma_2\in G_{(x_2,0;y,t)}$.
The occurrence of $\OO_{x_1}$ forces $\gamma^\lt\leq\gamma_1$, while the occurrence of $\OO_{x_2}$ implies $\gamma_2\leq \gamma^\rt$.
Since $\gamma^\lt\leq\gamma^\rt$ by our assumptions on $\gamma_a$ and $\gamma_b$, yet $\gamma^\lt(r)=\gamma^\rt(r)$, we are left to conclude that $\gamma_1(r) = \gamma_2(r)$, as shown in Figure \ref{cross_2}.
As this conclusion is valid for every choice of $\gamma_1$ and $\gamma_2$, we have shown that $y$ does not belong to $\DD_{x_1,x_2;t}$.

The argument for $\R\setminus\Supp(\mu_{s;t})\subset\R\setminus\DD_{s;t}$ is similar.
Assume the occurrence $\PP\cap\OO$.
Consider any $y\in\R\setminus\Supp(\mu_{s;t})$; by definition \eqref{mu_st_def}, this means there are points $x^\lt,x^\rt,y^\lt,y^\rt$ such that $x^\lt < x < x^\rt$, $y^\lt < y < y^\rt$, and 
\eq{
\LL(x^\rt,s;y^\lt,t) - \LL(x^\lt,s;y^\lt,t)
= \LL(x^\rt,s;y^\rt,t) - \LL(x^\lt,s;y^\rt,t).
}
Proceeding as above (but now invoking the event $\OO$ instead of $\OO_{x_1}\cap\OO_{x_2}$), we can find $\gamma^\lt\in G_{(x^\lt,s;y^\lt,t)}$ and $\gamma^\rt \in G_{(x^\rt,s;y^\rt,t)}$ such that $\gamma^\lt(r)=\gamma^\rt(r)$ for some $r\in(s,t)$, and $\gamma^\lt\leq\gamma\leq\gamma^\rt$ for any $\gamma\in G_{(x,s;y,t)}$.
It follows that any two elements of $G_{(x,s;y,t)}$ must intersect at time $r$; in particular, $(x,y)$ is not an element of $\DD_{s;t}$.
\end{proof}

\section{Proofs of Theorems~\hyperref[main_thm_b]{\ref*{main_thm}\ref*{main_thm_b}} and \hyperref[main_thm_2_b]{\ref*{main_thm_2}\ref*{main_thm_2_b}}} \label{upper_bound}

Here we return to the setting of a single time horizon $[s,t] = [0,1]$.
Recall from Proposition~\hyperref[containment_thm_a]{\ref*{containment_thm}\ref*{containment_thm_a}} that the exceptional set $\DD_{x_1,x_2}$ from \eqref{D_def} is almost surely equal to the support of the measure $\mu_{x_1,x_2}$ from \eqref{mu_1var_def}.
Therefore, the following input from \cite{basu-ganguly-hammond21} is equivalent to Theorem~\hyperref[main_thm_b]{\ref*{main_thm}\ref*{main_thm_b}}.

\begin{thm}
\label{LV_thm}
\textup{\cite[Thm.~1.1]{basu-ganguly-hammond21}}
For any $x_1<x_2$, the Hausdorff dimension of $\Supp(\mu_{x_1,x_2})$ is equal to $\frac{1}{2}$ almost surely.
\end{thm}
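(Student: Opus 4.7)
The plan is to establish matching upper and lower bounds for $\dim_H\Supp(\mu_{x_1,x_2})$, leveraging the identification $\Supp(\mu_{x_1,x_2})=\DD_{x_1,x_2}$ from Proposition~\hyperref[containment_thm_a]{\ref*{containment_thm}\ref*{containment_thm_a}} as a conceptual anchor.

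For the upper bound $\dim_H\Supp(\mu_{x_1,x_2})\leq\tfrac{1}{2}$, I would run a box-counting argument. Fix $R>0$ and cover $[-R,R]$ by $N\asymp\eps^{-1}$ intervals $J_i$ of length $\eps$. An interval $J_i$ meets $\DD_{x_1,x_2}$ only if there exists $y\in J_i$ admitting geodesics $(x_1,0)\to(y,1)$ and $(x_2,0)\to(y,1)$ that coincide only at time $1$; restricting these to $[0,1-\delta]$ and using planarity yields two strictly disjoint geodesics issuing from the fixed points $(x_1,0)$ and $(x_2,0)$ and terminating in the common $\eps$-window at time $1-\delta$. The rarity of this event should scale as $\eps^{1/2}$, the natural one-parameter analogue of the $\eps^{3/2}$ decay in Corollary~\ref{disjoint_2geo_rarity}, accounting for the fact that only the terminal endpoint is free while the two starting points are fixed. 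Summing over $i$ then produces $\E[\#\{i:J_i\cap\DD_{x_1,x_2}\neq\varnothing\}]=O(\eps^{-1/2})$, and taking $\eps=2^{-n}$ combined with Markov's inequality and Borel--Cantelli upgrades this to the almost sure bound $\dim_H\leq\tfrac{1}{2}$.

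For the lower bound $\dim_H\Supp(\mu_{x_1,x_2})\geq\tfrac{1}{2}$, I would deploy Frostman's energy method applied to $\mu_{x_1,x_2}$ itself: it suffices to show that for every $s<\tfrac{1}{2}$ and every $R>0$,
\[
\E\int_{[-R,R]^2}|y-y'|^{-s}\,\dd\mu_{x_1,x_2}(y)\,\dd\mu_{x_1,x_2}(y')<\infty.
\]
A dyadic decomposition of $|y-y'|$ combined with Fubini reduces this to the two-point bound $\E[\mu_{x_1,x_2}([y,y+h])^2]\leq Ch$, uniformly in $y\in[-R,R]$ and $h\in(0,1]$. This linear-in-$h$ estimate is the quantitative face of local Brownian regularity of the difference weight profile: the non-negative increment $\mu_{x_1,x_2}([y,y+h])=\ZZ_{x_1,x_2}(y+h)-\ZZ_{x_1,x_2}(y)$ is typically of magnitude $\sqrt{h}$. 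Such regularity is available in the pre-limit for Brownian LPP through the Brownian Gibbs property of the parabolic Airy line ensemble, and transfers to $\LL$ through the coupling of Theorem~\ref{blpp_airy}.

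The principal obstacle is this two-point moment estimate. The measure $\mu_{x_1,x_2}$ is singular, so no density argument is available, and $\ZZ_{x_1,x_2}$ is neither a Markov process nor a semimartingale, precluding direct use of classical Brownian tools. One must therefore translate microscopic Brownian regularity of the Airy sheet along one spatial direction---the fact that $y\mapsto\LL(x_2,0;y,1)-\LL(x_1,0;y,1)$ has Brownian-like fluctuations on small scales---into sharp $L^2$ control of its monotone increments. With that ingredient in hand, both the Frostman lower bound and the covering upper bound assemble without further difficulty.
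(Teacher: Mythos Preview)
The paper does not prove this statement: Theorem~\ref{LV_thm} is quoted verbatim as an input from \cite{basu-ganguly-hammond21}, and the present paper's contribution is the identification $\Supp(\mu_{x_1,x_2})=\DD_{x_1,x_2}$ (Proposition~\hyperref[containment_thm_a]{\ref*{containment_thm}\ref*{containment_thm_a}}), which combined with this imported result yields Theorem~\hyperref[main_thm_b]{\ref*{main_thm}\ref*{main_thm_b}}. So there is no ``paper's own proof'' to compare your proposal against.

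That said, your sketch is broadly aligned with what the paper reports about the argument in \cite{basu-ganguly-hammond21}. For the lower bound, the paper explicitly notes (Section~\ref{sketch_section}) that the cited work proceeds through local Gaussianity of weight profiles derived from the Brownian Gibbs property, which is exactly the mechanism you invoke for the two-point moment estimate underlying your Frostman argument; you have correctly identified this as the crux. For the upper bound, your box-counting outline is the natural univariate analogue of what this paper carries out for $\DD$ in Section~\ref{upper_bound_proof}, but note that the specific rarity input you need---an $\eps^{1/2}$ bound for two geodesics with \emph{fixed} distinct starting points $x_1,x_2$ and terminal points in a common $\eps$-window---is not supplied anywhere in this paper. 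Corollary~\ref{disjoint_2geo_rarity} treats only the symmetric case where both endpoints vary in $\eps$-intervals, and its $\eps^{3/2}$ exponent does not directly specialize to your setting. You would have to either extract this from \cite{hammond20} or \cite{basu-ganguly-hammond21} separately, or derive it by a modified argument; as written, that step is an assertion rather than a proof.
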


For completeness, we note that Theorem~\ref{LV_thm} was proved in \cite{basu-ganguly-hammond21} for the case $x_1=-1$, $x_2=1$.
But the proof there is not specific to these coordinates or the time horizon $[0,1]$.
Moreover, if one knows the result to be generalizable in one respect (either allowing $x_1$ and $x_2$ to be arbitrary, or allowing $s<t$ to be arbitrary), then it must be generalizable in the other respect, by rescaling via \eqref{KPZ_scaling}.

In order to establish Theorem~\hyperref[main_thm_2_b]{\ref*{main_thm_2}\ref*{main_thm_2_b}}, we 
handle the lower and upper bounds separately.
Namely, the lower bound is stated in terms of the measure $\mu$ from \eqref{mu_2var_def}, while the upper bound is given directly for the set $\DD$ from \eqref{D2_def}.
In light of Proposition~\hyperref[containment_thm_b]{\ref*{containment_thm}\ref*{containment_thm_b}}, the following two statements are together equivalent to Theorem~\hyperref[main_thm_2_b]{\ref*{main_thm_2}\ref*{main_thm_2_b}}.

\begin{prop}
\label{supp_thm}
The Hausdorff dimension of $\Supp(\mu)$ is at least $\frac{1}{2}$ almost surely.
\end{prop}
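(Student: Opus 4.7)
The plan is to exploit the remark made just before Theorem~\ref{LV_thm} that $\mu_{x_1,x_2}$ is essentially a one-dimensional projection of the restriction of $\mu$ to a vertical slab, and then to invoke the standard Lipschitz principle for Hausdorff dimension. Concretely, I fix any $x_1 < x_2$; by Theorem~\ref{LV_thm}, on a probability-one event $\AAA$ we have $\dim_H(\Supp(\mu_{x_1,x_2})) = \frac{1}{2}$. I will establish that on $\AAA \cap \PP$,
\[
\Supp(\mu_{x_1,x_2}) \subseteq \pi_2\bigl(\Supp(\mu)\cap([x_1,x_2]\times\R)\bigr),
\]
where $\pi_2(x,y) \coloneqq y$. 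Since $\pi_2$ is $1$-Lipschitz, it does not increase Hausdorff dimension, so this containment yields
\[
\tfrac{1}{2} = \dim_H(\Supp(\mu_{x_1,x_2})) \leq \dim_H\bigl(\pi_2(\Supp(\mu)\cap([x_1,x_2]\times\R))\bigr) \leq \dim_H(\Supp(\mu)),
\]
which is the desired bound.

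To prove the containment, I would fix $y \in \Supp(\mu_{x_1,x_2})$ and use the definition \eqref{mu_2var_def}: for every $n \geq 1$,
\[
\mu\bigl([x_1,x_2]\times(y-\tfrac{1}{n},y+\tfrac{1}{n})\bigr) = \mu_{x_1,x_2}\bigl((y-\tfrac{1}{n},y+\tfrac{1}{n})\bigr) > 0,
\]
so there exists $(a_n,b_n) \in \Supp(\mu)$ with $a_n\in[x_1,x_2]$ and $|b_n - y| < 1/n$. By compactness of $[x_1,x_2]$, a subsequence of $(a_n)$ converges to some $a_*\in[x_1,x_2]$; along this subsequence $(a_n,b_n)\to(a_*,y)$, and closedness of $\Supp(\mu)$ forces $(a_*,y)\in\Supp(\mu)$. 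Hence $y = \pi_2(a_*,y)$ lies in the claimed projection.

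There is no real analytic obstacle here --- this soft projection comparison pushes all of the hard work onto Theorem~\ref{LV_thm}, whose proof in \cite{basu-ganguly-hammond21} rests on the Brownian Gibbs property and local Gaussianity of weight profiles. The only small point to verify is that $\mu$ is genuinely well-defined on $\PP$, which is ensured by Definition~\hyperref[proper_def_5]{\ref*{proper_def}\ref*{proper_def_5}} and the convention recorded after \eqref{mu_2var_def}, so that the measure-theoretic manipulations above are legitimate. Off of $\PP$ both sides are trivial (zero measure, empty support), so no further care is required.
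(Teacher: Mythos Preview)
Your proof is correct and follows essentially the same route as the paper: both reduce to Theorem~\ref{LV_thm} by showing that every $y\in\Supp(\mu_{x_1,x_2})$ lies in the $\pi_2$-projection of $\Supp(\mu)\cap([x_1,x_2]\times\R)$, and then use that projection does not increase Hausdorff dimension. The only cosmetic difference is that the paper proves the containment by the contrapositive (via a uniform lower bound on the radius function $x\mapsto\eps_x$) and spells out the cover-projection argument explicitly, whereas you argue the containment directly by a sequential compactness step and invoke the Lipschitz principle abstractly.
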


\begin{prop} \label{upper_bd_thm}
The Hausdorff dimension of $\DD$ is at most $\frac{1}{2}$ almost surely.
\end{prop}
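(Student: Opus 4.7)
The plan is to implement the covering argument sketched at the end of Section~\ref{sketch_section}, with Corollary~\ref{disjoint_2geo_rarity} as the central quantitative input. Since Hausdorff dimension is monotone under inclusion and countably stable, it suffices to prove for each integer $R \geq 1$ that the Hausdorff dimension of $\DD \cap [-R,R]^2$ is at most $\tfrac{1}{2}$ almost surely; a countable union then yields the full result. Fix such an $R$ for the remainder of the sketch.

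The first step translates box intersections with $\DD$ into disjoint-geodesic events. For $\eps > 0$, partition $[-R,R]^2$ into boxes $B_{i,j} = [i\eps,(i+1)\eps] \times [j\eps,(j+1)\eps]$, and associate the enlarged open intervals $I_i = ((i-1)\eps,(i+2)\eps)$ and $J_j = ((j-1)\eps,(j+2)\eps)$ of length $3\eps$. Suppose $(x,y) \in \DD \cap B_{i,j}$; then on the almost sure event $\OO$ of Lemma~\ref{no_crosses} there exist $\gamma^\lt, \gamma^\rt \in G_{x,y}$ with $\gamma^\lt(r) < \gamma^\rt(r)$ for all $r \in (0,1)$. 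For any $x_1 < x < x_2$ in $I_i$, any $y_1 < y < y_2$ in $J_j$, and any $\gamma_k \in G_{(x_k,0;y_k,1)}$ for $k=1,2$, geodesic ordering on $\OO$ forces $\gamma_1 \leq \gamma^\lt < \gamma^\rt \leq \gamma_2$ on $(0,1)$, so $\gamma_1$ and $\gamma_2$ are disjoint. Hence on $\OO$,
\[
\{B_{i,j} \cap \DD \neq \varnothing\} \subset \{\MDG_{0,1}^\R(I_i, J_j) \geq 2\}.
\]

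The second step is a first-moment estimate. Let $N_\eps$ be the number of pairs $(i,j)$ with $B_{i,j} \subset [-R,R]^2$ for which the right-hand event above occurs; this random variable is measurable by Corollary~\ref{disjoint_2geo_rarity}. For $\eps$ sufficiently small (depending only on $R$ and the constant $G$), the hypotheses \eqref{disjoint_2geo_rarity_conditions} hold uniformly over such $(i,j)$ when applied to the intervals $I_i$, $J_j$ (of radius $\tfrac{3}{2}\eps$ about $(i+\tfrac12)\eps$, $(j+\tfrac12)\eps$), using the crude bound $|x-y| \leq 2R$ on $[-R,R]^2$. Since the number of boxes is $O_R(\eps^{-2})$,
\[
\E[N_\eps] \leq C(R)\,\eps^{-2}\,G^8 \exp\bigl\{G^2 (\log \eps^{-1})^{5/6}\bigr\} (3\eps/2)^{3/2} = \eps^{-1/2} \exp\bigl\{O_R\bigl((\log \eps^{-1})^{5/6}\bigr)\bigr\}.
\]

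Finally, fix $\alpha > \tfrac{1}{2}$ and note that on $\OO$ the $N_\eps$ boxes cover $\DD \cap [-R,R]^2$ by sets of diameter at most $\eps\sqrt{2}$. Writing $\HH^\alpha_\delta$ for the $\alpha$-Hausdorff pre-measure at scale $\delta$,
\[
\E\bigl[\HH^\alpha_{\eps\sqrt{2}}(\DD \cap [-R,R]^2)\,\one_{\OO}\bigr] \leq 2^{\alpha/2} \eps^\alpha\,\E[N_\eps] \leq 2^{\alpha/2}\,\eps^{\alpha - 1/2} \exp\bigl\{O_R\bigl((\log \eps^{-1})^{5/6}\bigr)\bigr\},
\]
which tends to $0$ as $\eps \searrow 0$ since $(\log \eps^{-1})^{5/6} = o(\log \eps^{-1})$ while $\alpha - \tfrac{1}{2} > 0$. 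As $\HH^\alpha_\delta$ is non-decreasing as $\delta \searrow 0$ while bounded above by a quantity going to $0$, the expectation $\E[\HH^\alpha_\delta(\DD\cap[-R,R]^2)\one_\OO]$ must vanish for every $\delta > 0$, so by monotone convergence $\E[\HH^\alpha(\DD\cap[-R,R]^2)\one_\OO]=0$, and hence $\HH^\alpha(\DD \cap [-R,R]^2) = 0$ almost surely. Taking $\alpha$ through a rational sequence decreasing to $\tfrac{1}{2}$ completes the proof. The only mild bookkeeping lies in verifying \eqref{disjoint_2geo_rarity_conditions} uniformly in $(i,j)$, which reduces to the observation that $|x-y| \leq 2R$ on $[-R,R]^2$ while the right-hand side of the second condition grows polynomially in $\eps^{-1/2}$; this is routine rather than a genuine obstacle.
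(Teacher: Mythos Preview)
Your proposal is correct and follows essentially the same approach as the paper: reduce to $[-R,R]^2$, show via geodesic ordering that a box meeting $\DD$ forces two disjoint geodesics between nearby short intervals, apply Corollary~\ref{disjoint_2geo_rarity} for a first-moment bound of order $\eps^{-1/2+o(1)}$ on the number of such boxes, and conclude the dimension bound. The paper phrases the containment via Lemma~\ref{reordering_lemma} on the event $\EE$ rather than $\OO$, and closes with Markov plus Borel--Cantelli rather than an expected-Hausdorff-measure argument (which sidesteps any measurability concern for $\HH^\alpha_\delta(\DD\cap[-R,R]^2)$), but these are cosmetic differences.
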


We establish Propositions~\ref{supp_thm} and \ref{upper_bd_thm} in Sections~\ref{supp_thm_proof} and \ref{upper_bound_proof}, respectively.
Before proceeding, let us formally define Hausdorff dimension.
Recall that
for $d\in[0,\infty)$, the $d$-dimensional \textit{Hausdorff content} of a metric space $\XX$ is
\eeq{ \label{H_content_def}
H^d(\XX) \coloneqq \inf\bigg\{\sum_i\diam(U_i)^d : \{U_i\} \text{ is a countable cover of $\XX$}\bigg\}.
}
The \textit{Hausdorff dimension} of $\XX$ is
\eeq{ \label{H_dim_def}
d_H(\XX) \coloneqq \inf
\{d \geq 0 : H^d(\XX) = 0\}.
}

\subsection{Proof of dimension lower bound in bivariate case} \label{supp_thm_proof}
Here we prove that $d_H(\Supp(\mu))\geq\frac{1}{2}$ almost surely.

\begin{proof}[Proof of Proposition~\ref{supp_thm}]
By Theorem~\ref{LV_thm}, it suffices to show the following inequality:
\eeq{ \label{dim_supp_2show}
d_H(\Supp(\mu_{x_1,x_2})) \leq d_H(\Supp(\mu)) \quad \text{for any $x_1<x_2$}.
}
To this end, we first prove that
\eeq{ \label{containment_implication}
y \in \Supp(\mu_{x_1,x_2}) \quad 
&\implies \quad ([x_1,x_2]\times\{y\})\cap\Supp(\mu) \neq \varnothing.
}
Indeed, let us check the contrapositive.

If $(x,y) \notin \Supp(\mu)$, then there exists an open ball $B_\eps(x,y)\subset\R^2$ of radius $\eps>0$ and centered at $(x,y)$, such that $\mu(B_\eps(x,y)) = 0$.
For each $x$, define
\eq{
\eps_x \coloneqq \sup\{\eps>0 : \mu(B_\eps(x,y))=0\} \vee 0.
}
The map $x\mapsto \eps_x$ is continuous, in fact with Lipschitz constant $1$, as seen from the following chain of implications (if $\eps\leq 0$, then $B_\eps(\cdot,\cdot)$ is taken to be the empty set):
\eq{
&\mu(B_{\eps_x-\eps}(x,y)) = 0 < \mu(B_{\eps_x+\eps}(x,y)) \quad \forall\ \eps>0 \\
\implies\quad &\mu(B_{\eps_x-\delta-\eps}(x',y)) = 0 < \mu(B_{\eps_x+\delta+\eps}(x',y)) \quad \forall\ x'\in[x-\delta,x+\delta],\, \delta>0,\, \eps>0\\
\implies\quad &\eps_{x'}\in[\eps_x-\delta-\eps,\eps_x+\delta+\eps] \quad \forall\ x'\in[x-\delta,x+\delta],\, \delta>0,\, \eps>0 \\
\implies\quad &\eps_{x'}\in[\eps_x-\delta,\eps_x+\delta] \quad \forall\ x'\in[x-\delta,x+\delta],\, \delta>0.
}
Now, if $([x_1,x_2]\times\{y\})\cap\Supp(\mu) = \varnothing$, then $\eps_x>0$ for every $x\in[x_1,x_2]$.
By the continuity just observed, there is then some $\eps>0$ such that $\eps_x\geq \eps$ for all $x\in[x_1,x_2]$.
Consequently, $\mu([x_1,x_2]\times(y-\eps,y+\eps)) = 0$, which means $y\notin\Supp(\mu_{x_1,x_2})$.
We have now proved \eqref{containment_implication}.

Now suppose $\{U_i\}$ is a countable cover of $\Supp(\mu)$.
For each $i$, let
\eq{
\wt U_i \coloneqq \{y\in\R : ([x_1,x_2]\times\{y\}) \cap U_i \neq \varnothing\}.
}
By \eqref{containment_implication}, $\{\wt U_i\}$ is a cover of $\Supp(\mu_{x_1,x_2})$.
Furthermore, it is trivial that $\diam(\wt U_i) \leq \diam(U_i)$.
From the definitions \eqref{H_content_def} and \eqref{H_dim_def} of Hausdorff content and Hausdorff dimension, the inequality \eqref{dim_supp_2show} immediately follows.
\end{proof}


\subsection{Proof of dimension upper bound in bivariate case} \label{upper_bound_proof}
In this section, we prove the matching upper bound for $d_H(\DD)$. 



\subsubsection{Step 1: Reduce to bounded sets} \label{step_1}
Suppose we can show the following for any $R>0$.

\begin{claim} \label{upper_bd_R}
We almost surely have $d_H(\DD\cap[-R,R]^2) \leq \frac{1}{2}$.
\end{claim}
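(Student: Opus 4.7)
The plan is to carry out a covering argument based on Corollary \ref{disjoint_2geo_rarity}, following the outline at the end of Section \ref{sketch_section}. Throughout, we work on the almost sure event $\PP \cap \OO$ from Definition \ref{proper_def} and \eqref{global_ordering_event}. The first step is to show that membership of a point $(x, y) \in \DD \cap [-R,R]^2$ forces the $\MDG^\R$ event on any pair of open intervals of radius $\eps$ around $x$ and $y$. Pick $\gamma^\lt, \gamma^\rt \in G_{x,y}$ with $\gamma^\lt(r) < \gamma^\rt(r)$ for all $r \in (0,1)$. Given $\eps > 0$ and any $x_1 \in (x - \eps, x)$, $x_2 \in (x, x + \eps)$, $y_1 \in (y - \eps, y)$, $y_2 \in (y, y + \eps)$, the occurrence of $\OO$ forces every $\gamma_1 \in G_{(x_1, 0; y_1, 1)}$ to satisfy $\gamma_1 \leq \gamma^\lt$ and every $\gamma_2 \in G_{(x_2, 0; y_2, 1)}$ to satisfy $\gamma_2 \geq \gamma^\rt$. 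Combined with the strict ordering $\gamma_1(0) = x_1 < x_2 = \gamma_2(0)$ and $\gamma_1(1) = y_1 < y_2 = \gamma_2(1)$, the pair $(\gamma_1, \gamma_2)$ is strictly disjoint on all of $[0,1]$, so $\MDG^\R_{0,1}((x - \eps, x + \eps), (y - \eps, y + \eps)) \geq 2$.

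Next, we cover $[-R,R]^2$ by the $O(\eps^{-2})$ closed squares $Q_{i,j} = [i\eps, (i+1)\eps] \times [j\eps, (j+1)\eps]$ indexed by integers $i, j$ with $|i|, |j| \leq \lceil R/\eps \rceil + 1$. The previous paragraph, applied at any witness $(x, y) \in \DD \cap Q_{i,j}$, combined with monotonicity of $\MDG^\R$ in its spatial arguments, shows that $\DD \cap Q_{i,j} \neq \varnothing$ implies
\begin{equation*}
B_{i,j} \coloneqq \bigl\{\MDG^\R_{0,1}\bigl((i\eps - \eps,\, i\eps + 2\eps),\, (j\eps - \eps,\, j\eps + 2\eps)\bigr) \geq 2\bigr\}.
\end{equation*}
For $\eps$ small enough in terms of $R$, the hypotheses \eqref{disjoint_2geo_rarity_conditions} are satisfied uniformly over $(i, j)$, and Corollary \ref{disjoint_2geo_rarity} yields $\P(B_{i,j}) \leq C \exp\{G^2 (\log \eps^{-1})^{5/6}\} \eps^{3/2}$. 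Writing $N_\eps \coloneqq \#\{(i, j) : B_{i,j}\text{ occurs}\}$, we obtain $\E[N_\eps] \leq C_R \exp\{G^2(\log \eps^{-1})^{5/6}\} \eps^{-1/2}$.

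Finally, using $\{Q_{i,j} : B_{i,j} \text{ occurs}\}$ as a cover of $\DD \cap [-R,R]^2$, each element having diameter $\sqrt{2}\eps$, definition \eqref{H_content_def} gives the pathwise bound $H^d(\DD \cap [-R,R]^2) \leq 2^{d/2} N_\eps \eps^d$, valid for every $d \geq 0$ and every $\eps > 0$. For fixed $d > 1/2$, the expectation of this upper bound, at most $C_R' \exp\{G^2(\log \eps^{-1})^{5/6}\} \eps^{d - 1/2}$, tends to $0$ as $\eps \searrow 0$, because the sub-polynomial factor $\exp\{G^2 (\log \eps^{-1})^{5/6}\}$ is dominated by any negative power of $\eps$. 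Passing to a sequence $\eps_n \to 0$ along which these expectations are summable and applying Borel--Cantelli gives $H^d(\DD \cap [-R,R]^2) = 0$ almost surely for each $d > 1/2$, from which $d_H(\DD \cap [-R,R]^2) \leq 1/2$ almost surely follows by using a countable sequence $d_k \searrow 1/2$. The main technical point to verify is the first step: one must check that weak ordering of geodesics plus strict ordering of endpoints produces \emph{strict} disjointness on all of $[0,1]$ (including the times $0$ and $1$), so that $(\gamma_1, \gamma_2)$ is genuinely counted by $\MDG^\R$ as defined in Section \ref{inputs_1}; everything else reduces to straightforward manipulation of the tail bound.
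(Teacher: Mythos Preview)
Your proof is correct and follows essentially the same approach as the paper: the paper's Claim~\ref{set_containments} establishes the same implication $(x,y)\in\DD\cap Q \Rightarrow \MDG\geq 2$ on a nearby pair of intervals (using Lemma~\ref{reordering_lemma} rather than the event $\OO$, but to the same effect), and the paper's Claim~\ref{upper_bd_d} carries out the same covering and Borel--Cantelli argument via Corollary~\ref{disjoint_2geo_rarity}. The only cosmetic differences are the precise interval radii in the covering and that the paper makes the Markov/Borel--Cantelli step more explicit with an auxiliary summable sequence $\delta_j$.
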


Proposition~\ref{upper_bd_thm} then immediately follows by taking a countable sequence $R_j\nearrow\infty$ and using the fact that if $\XX\subset\R^2$ satisfies $d_H(\XX\cap[-R,R]^2) \leq d$ for every $R$, then $d_H(\XX) \leq d$.
So let us fix the value of $R$ and aim simply to prove Claim~\ref{upper_bd_R}.

\subsubsection{Step 2: Relate the exceptional sets to pairs of disjoint geodesics}


Recall the event $\EE$ 
from \eqref{existence_event} 
guaranteeing geodesic existence. 
Let $\MDG(A,B) = \MDG_{0,1}^\R(A,B)$ denote the maximum size of a collection of disjoint geodesics whose endpoints lie in $A\times\{0\}$ and $B\times\{1\}$,
and consider the event
%
%
%
\eeq{ \label{event_definitions}
\WW_{z,w}^\eps &\coloneqq \{\MDG((z-\eps,z+\eps),(w-\eps,w+\eps))\geq2\}, \quad z,w\in\R,\, \eps>0. 
}

\begin{claim} \label{set_containments} 

On the event $\EE$, we have
\eq{ 
\big\{\DD \cap \big([z,z+\eps)\times[w,w+\eps)\big) \neq \varnothing\big\}\subset\WW^\eps_{z,w} \quad \text{for all $z,w\in\R$, $\eps>0$}.
}

%
\end{claim}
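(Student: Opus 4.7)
The plan is to produce, from any $(x,y)\in\DD\cap([z,z+\eps)\times[w,w+\eps))$, two explicit geodesics that are disjoint on all of $[0,1]$ and whose endpoints lie in $(z-\eps,z+\eps)\times\{0\}$ and $(w-\eps,w+\eps)\times\{1\}$. Since $x\in[z,z+\eps)\subset(z-\eps,z+\eps)$ and $y\in[w,w+\eps)\subset(w-\eps,w+\eps)$, one can choose $x_1<x<x_2$ inside $(z-\eps,z+\eps)$ and $y_1<y<y_2$ inside $(w-\eps,w+\eps)$. On $\EE$, each of $G_{x_1,y_1}$, $G_{x,y}$, $G_{x_2,y_2}$ is nonempty (the middle one because $(x,y)\in\DD$) and admits leftmost and rightmost elements. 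Denote by $\tilde\gamma_1$ the leftmost element of $G_{x_1,y_1}$, by $\tilde\gamma_2$ the rightmost of $G_{x_2,y_2}$, and by $\gamma^\lt,\gamma^\rt$ the leftmost and rightmost of $G_{x,y}$. Sandwiching the two defining geodesics that witness $(x,y)\in\DD$ between $\gamma^\lt$ and $\gamma^\rt$ yields $\gamma^\lt(r)<\gamma^\rt(r)$ for every $r\in(0,1)$.

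The core step is to establish the comparisons
\[
\tilde\gamma_1\leq\gamma^\lt\quad\text{and}\quad \gamma^\rt\leq\tilde\gamma_2\quad\text{on }[0,1].
\]
Both follow from Lemma~\ref{reordering_lemma} applied in ``degenerate'' configurations where two of its three endpoint pairs coincide, which is permitted by the weak-inequality hypotheses $x_1\leq x\leq x_2$ and $y_1\leq y\leq y_2$. For the first comparison, apply part~(b) to the triple $(x_1,y_1)\leq(x_1,y_1)\leq(x,y)$: given $\gamma^\lt\in G_{x,y}$, the lemma produces some $\alpha\in G_{x_1,y_1}$ with $\alpha\leq\gamma^\lt$, and leftmost-ness of $\tilde\gamma_1$ yields $\tilde\gamma_1\leq\alpha\leq\gamma^\lt$. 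For the second, apply part~(a) to $(x,y)\leq(x_2,y_2)\leq(x_2,y_2)$: given $\gamma^\rt\in G_{x,y}$, the lemma produces some $\beta\in G_{x_2,y_2}$ with $\gamma^\rt\leq\beta$, and rightmost-ness of $\tilde\gamma_2$ yields $\gamma^\rt\leq\beta\leq\tilde\gamma_2$.

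Chaining these comparisons with the strict interior separation of $\gamma^\lt$ and $\gamma^\rt$ gives
\[
\tilde\gamma_1(r)\leq\gamma^\lt(r)<\gamma^\rt(r)\leq\tilde\gamma_2(r)\quad\text{for every }r\in(0,1),
\]
while at the endpoints $\tilde\gamma_1(0)=x_1<x_2=\tilde\gamma_2(0)$ and $\tilde\gamma_1(1)=y_1<y_2=\tilde\gamma_2(1)$ by construction. Hence $\tilde\gamma_1$ and $\tilde\gamma_2$ are disjoint on all of $[0,1]$, and their endpoints lie in $(z-\eps,z+\eps)\times\{0\}$ and $(w-\eps,w+\eps)\times\{1\}$. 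This exhibits a disjoint pair witnessing $\MDG((z-\eps,z+\eps),(w-\eps,w+\eps))\geq 2$, so $\WW^\eps_{z,w}$ holds. I do not anticipate a genuine obstacle: the argument is deterministic given $\EE$, and the only delicacy is checking that the degenerate applications of Lemma~\ref{reordering_lemma} are legitimate, which they are because the lemma's hypotheses use weak inequalities throughout.
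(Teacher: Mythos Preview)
Your proof is correct and follows essentially the same approach as the paper's: pick $x_1<x<x_2$ in $(z-\eps,z+\eps)$ and $y_1<y<y_2$ in $(w-\eps,w+\eps)$, then use Lemma~\ref{reordering_lemma} to produce geodesics in $G_{x_1,y_1}$ and $G_{x_2,y_2}$ sandwiching the witnessing pair from $\DD$, yielding strict separation on $(0,1)$ and at the endpoints. The only cosmetic difference is that you route through the leftmost/rightmost elements $\gamma^\lt,\gamma^\rt,\tilde\gamma_1,\tilde\gamma_2$, whereas the paper applies Lemma~\ref{reordering_lemma} directly to the witnessing geodesics $\gamma_1^*,\gamma_2^*$; this extra layer is harmless but unnecessary.
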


\begin{proof}
The following argument is illustrated in Figure~\ref{upper_bound_fig}.
Suppose $(x,y)\in\DD \cap \big([z,z+\eps)\times[w,w+\eps)\big)$.
That is, there are $\gamma_1^*,\gamma_2^*\in G_{x,y}$ such that $\gamma_1(r)<\gamma_2(r)$ for all $r\in(0,1)$.
Take any $x_1\in(x-\eps,z)$, $y_1\in(y-\eps,w)$, and set $x_2 = x_1+\eps$, $y_2 = y_1+\eps$.
We then have
\eq{
z-\eps<x_1<z<x_2<z+\eps \quad \text{and} \quad w-\eps<y_1<y<y_2<w+\eps.
}
By Lemma~\ref{reordering_lemma} and the assumed occurrence of $\EE$, there are $\gamma_1\in G_{x_1,y_1}$ and $\gamma_2\in G_{x_2,y_2}$ such that
\eq{ 
\gamma_1(r) \leq \gamma_1^*(r) < \gamma_2^*(r) \leq \gamma_2(r) \quad \text{for all $r\in(0,1)$}.
}
Of course, we also know $\gamma_1(0)=x_1<x_2=\gamma_2(0)$ and $\gamma_1(1)=y_1<y_2=\gamma_2(1)$, and so $\gamma_1$ and $\gamma_2$ are disjoint.
By our choice of endpoints, $\WW^\eps_{z,w}$ has occurred.
%
\end{proof}

\begin{figure}[]
\centering
\includegraphics[trim=0.5in 0.5in 0.5in 0.5in, clip, width=0.48\textwidth]{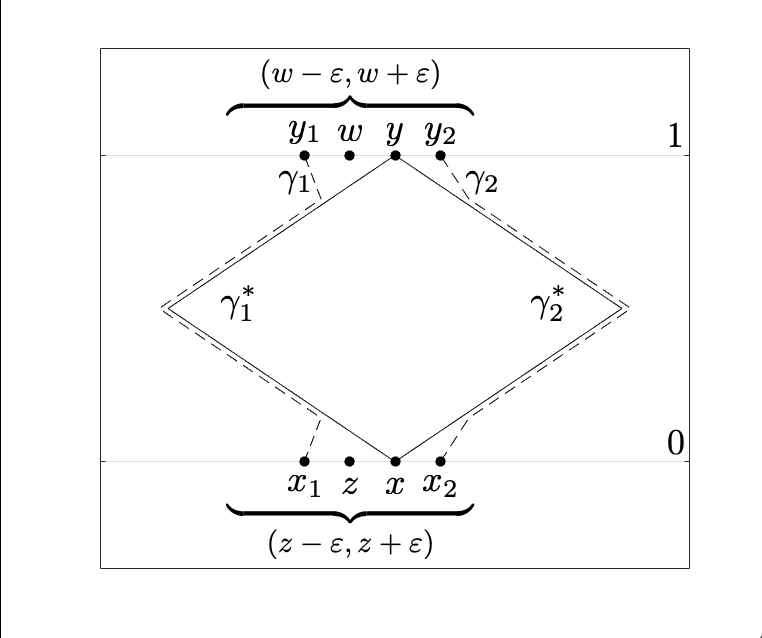}
\caption{Geodesics considered in the proof of Claim~\ref{set_containments}, when the pair $(x,y)$ belongs to $\DD\cap\big([z,z+\eps)\times[w,w+\eps)\big)$. The
disjointness of the solid geodesics implies the disjointness of the dashed geodesics.}
\label{upper_bound_fig}
\end{figure}

\subsubsection{Step 3: Use tail estimate to deduce dimension upper bound} \label{step_3}
Suppose we can show the following for any $d>\frac{1}{2}$.

\begin{claim} \label{upper_bd_d}
We almost surely have $H^d(\DD\cap[-R,R]^2)=0$. 
\end{claim}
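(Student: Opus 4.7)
The plan is to cover the random set $\DD \cap [-R,R]^2$ by a deterministic $\eps$-grid and bound the $d$-dimensional Hausdorff content in expectation, using Claim~\ref{set_containments} together with the tail bound on disjoint geodesics supplied by Corollary~\ref{disjoint_2geo_rarity}.

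Fix $d > 1/2$. For each $\eps \in (0, 1)$ and each pair of integers $i, j \in \{0, 1, \dots, N_\eps\}$ with $N_\eps := \lceil 2R/\eps\rceil$, set $z_i := -R + i\eps$ and $w_j := -R + j\eps$. The half-open squares $\{[z_i, z_i + \eps) \times [w_j, w_j + \eps)\}_{0 \leq i, j \leq N_\eps}$ cover $[-R,R]^2$, and each has diameter $\sqrt{2}\eps$. By Claim~\ref{set_containments}, on the almost sure event $\EE$ from \eqref{existence_event}, a given square contains a point of $\DD$ only if the event $\WW^\eps_{z_i, w_j}$ from \eqref{event_definitions} occurs. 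Consequently, the sub-collection of squares on which $\WW^\eps_{z_i, w_j}$ occurs covers $\DD \cap [-R,R]^2$, yielding, on $\EE$, the deterministic upper bound
\begin{align*}
H^d(\DD \cap [-R,R]^2) \,\leq\, Z_\eps \,:=\, (\sqrt{2}\eps)^d \sum_{0 \leq i, j \leq N_\eps} \one_{\WW^\eps_{z_i, w_j}}.
\end{align*}

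For $\eps$ smaller than a constant depending only on $R$, the hypotheses \eqref{disjoint_2geo_rarity_conditions} (with $s = 0$, $t = 1$) are satisfied uniformly over $|z_i|, |w_j| \leq R$, since $|z_i - w_j| \leq 2R$ is eventually dominated by $\eps^{-1/2}(\log \eps^{-1})^{-2/3} G^{-2}$. Summing the probability bound from Corollary~\ref{disjoint_2geo_rarity} over the $(N_\eps + 1)^2$ grid points yields
\begin{align*}
\E Z_\eps \,\leq\, (N_\eps+1)^2 (\sqrt{2}\eps)^d G^8 \exp\bigl\{G^2(\log \eps^{-1})^{5/6}\bigr\} \eps^{3/2} \,\leq\, C_R\, \eps^{d-1/2} \exp\bigl\{G^2(\log \eps^{-1})^{5/6}\bigr\}.
\end{align*}
Since $d - 1/2 > 0$ and $(\log \eps^{-1})^{5/6} = o(\log \eps^{-1})$ as $\eps \to 0$, the right side tends to $0$, and taking $\eps_n := 2^{-n}$ renders the sequence $(\E Z_{\eps_n})_{n \geq 1}$ summable. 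Markov's inequality and the Borel--Cantelli lemma then force $Z_{\eps_n} \to 0$ almost surely, and combining with the deterministic upper bound on $\EE$ gives $H^d(\DD \cap [-R,R]^2) = 0$ almost surely, proving Claim~\ref{upper_bd_d}.

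The main obstacle is essentially bookkeeping: verifying the hypotheses of Corollary~\ref{disjoint_2geo_rarity} along the grid points and organizing the Borel--Cantelli step. The substance of the argument lies entirely in the $\eps^{3/2}$ probability bound from that corollary, which balances against the $\eps^{-2}$ grid count and the $\eps^d$ diameter scaling to produce the critical exponent $d = 1/2$; the subpolynomial error $\exp\{G^2(\log \eps^{-1})^{5/6}\}$ is harmless for any fixed $d$ strictly above $1/2$. This is precisely the box-counting computation sketched in Section~\ref{sketch_section}.
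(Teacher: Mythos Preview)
Your proof is correct and follows essentially the same box-counting approach as the paper: cover $[-R,R]^2$ by an $\eps$-grid, use Claim~\ref{set_containments} and Corollary~\ref{disjoint_2geo_rarity} to bound the expected number of cells meeting $\DD$, and apply Borel--Cantelli along a geometric sequence. Your organization is in fact slightly cleaner than the paper's, which introduces auxiliary parameters $\eta$ and $\delta_j$ to reach the same conclusion.
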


Claim~\ref{upper_bd_R} immediately follows by taking a countable sequence $d_j\searrow\frac{1}{2}$.
Therefore, let us fix $d>\frac{1}{2}$ and complete the proof of Proposition~\ref{upper_bd_thm} by verifying Claim~\ref{upper_bd_d}.
Let $\eta \coloneqq (2d-1)/6 > 0$.

\begin{proof}[Proof of Claim~\ref{upper_bd_d}]
Let $G$ be the constant from Theorem~\ref{disjoint_geo_rarity}, and choose $\eps'\in(0,1]$ sufficiently small that the following inequalities hold for all $\eps\in(0,\eps']$:
\begin{subequations}
\label{eps_assumptions}
\begin{linenomath}\postdisplaypenalty=0
\begin{align}
\eps \leq G^{-16}, \quad  2R \leq \eps^{-1/2}\Big(\log \frac{1}{\eps}\Big)^{-2/3}G^{-2}, \label{eps_assumption_1} \\
G^{8}\exp\Big\{G^2\Big(\log\frac{1}{\eps}\Big)^{5/6}\Big\}\eps^{3/2} \leq \eps^{3/2-\eta}, \label{eps_assumption_2} \\
\eps^\eta \leq \frac{1}{2(2R+1)^2}. \label{eps_assumption_3}
\end{align} 
\end{linenomath}
\end{subequations}
The assumption \eqref{eps_assumption_1} is technical and allows us to apply Corollary~\ref{disjoint_2geo_rarity} whenever the relevant spatial coordinates belong to $[-R,R]$, and \eqref{eps_assumption_2} merely makes the resulting estimate easier to write:
\eeq{ \label{single_W}
\P(\WW^\eps_{z,w}) \leq \eps^{3/2-\eta} \quad \text{for all $z,w\in[-R,R]$, $\eps\in(0,\eps']$}.
}
Now take any summable sequence $\delta_j\searrow0$. 
Because $d-1/2-2\eta=\eta>0$, we can subsequently choose a sequence $\eps_j\searrow0$ such that 
\eeq{ \label{eps_sequence_choice}
\lim_{j\to\infty} \eps_j^{d-1/2-2\eta}\delta_j^{-1} = 0.
}
For convenience, let us always choose $\eps_j$ so that $R/\eps_j\in\Z$.
As soon as $\eps_j\leq\eps'$, the estimate \eqref{single_W} leads to
\eeq{ \label{sum_indicator}
\E\bigg[\sum_{z,w\in\eps_j\Z\cap[-R,R]} \one_{\WW^{\eps_j}_{z,w}}\bigg] 
\leq \Big(\frac{2R}{\eps_j}+1\Big)^2\eps_j^{3/2-\eta}
\leq (2R+1)^2\eps_j^{-1/2-\eta}
&\stackref{eps_assumption_3}{\leq} \eps_j^{-1/2-2\eta}.
}
Applying Markov's inequality to \eqref{sum_indicator} results in
\eeq{\label{for_borel_cantelli_1}
\P\bigg(\sum_{z,w\in\eps_j\Z\cap[-R,R]} \one_{\WW^{\eps_j}_{z,w}} \geq \eps_j^{-1/2-2\eta}\delta_j^{-1}\bigg)&\leq \delta_j. 
}
Our final step will be to use this inequality to deduce that the $d$-dimensional Hausdorff content of $\DD\cap[-R,R]^2$ is zero.

If the event appearing in \eqref{for_borel_cantelli_1} does not occur, then Claim~\ref{set_containments} implies that $\DD \cap \big([z,z+\eps_j)\times[w,w+\eps_j)\big)$ is nonempty for at most $\eps_j^{-1/2-2\eta}\delta_j^{-1}$ values of $(z,w)\in (\eps_j\Z\cap[-R,R])^2$.
In this case, $\DD \cap [-R,R]^2$ can be covered by $\eps_j^{-1/2-2\eta}\delta_j^{-1}$ rectangles of diameter less than $2\eps_j$, meaning that 
\eq{
H^d(\DD \cap [-R,R]^2) \leq 2^d\eps_j^{d-1/2-2\eta}\delta_j^{-1}.
}
Since $\sum_{j}\delta_j<\infty$, it follows from \eqref{for_borel_cantelli_1} and Borel--Cantelli that with probability one, the above display is true for all large $j$.
Because of \eqref{eps_sequence_choice}, this implies $H^d(\DD \cap [-R,R]^2)=0$ with probability one.
\end{proof}

\bibliography{airy}

\end{document}